\title{ Post-Processed Posteriors for Sparse Covariances and Its Application to Global Minimum Variance Portfolio}
\author[1]{Kwangmin Lee}
\author[1]{Jaeyong Lee}
\affil[1]{Department of Statistics, Seoul National University}
\begin{document}
	
	\maketitle

	\begin{abstract}
		We consider Bayesian inference of sparse covariance matrices and propose a post-processed posterior. This method consists of two steps. In the first step, posterior samples are obtained from the conjugate inverse-Wishart posterior without considering the sparse structural assumption. The posterior samples are transformed in the second step to satisfy the sparse structural assumption through the hard-thresholding function. This non-traditional Bayesian procedure is justified by showing that the post-processed posterior attains the optimal minimax rates. We also investigate the application of the post-processed posterior to the estimation of the global minimum variance portfolio. We show that the post-processed posterior for the global minimum variance portfolio also attains the optimal minimax rate under the sparse covariance assumption. 
		The advantages of the post-processed posterior for the global minimum variance portfolio are demonstrated by a simulation study and a real data analysis with S\&P 400 data.

	\end{abstract}

	\section{Introduction}
	
	%공분산 추정문제 : 모형은 이러하며, 이에 대해 베이즈 추정하기 위해 공분산에 대해 다음과 같은 사전분포들이 활용되어 왔다.
	Suppose $X_1,\ldots,X_n$ are independent and identically distributed from a $p$-dimensional multivariate normal distribution with mean zero and a covariance matrix. We consider the estimation of the covariance matrix in this paper. When $p$ is larger than $n$, we refer to this situation as the high-dimensional settings, in which traditional covariance estimation methods such as the sample covariance estimator and the Bayesian inference by the inverse-Wishart prior, are not consistent. 
\cite{marvcenko1967distribution} showed that the eigenvalues derived from the sample covariance are not consistent if $p/n\lra c \in(0,1)$. 
For the Bayesian inference on the high-dimensional covariance, \cite{lee2018optimal} showed that an obviously inappropriate degenerate prior is a minimax optimal prior for the class of unconstrained covariances. 
Thus, structural assumptions on the covariance matrix are commonly employed for the consistent covariance estimation.

	%in which a large portion of elements are assumed to be (near) zero. It is a natural assumption when quite a few pairs of random variables in $X_i$ are perceived to be marginally independent.

	Among the structural assumptions on the covariance matrix, we focus on the sparse covariance assumption, which imply that a large portion of elements of the covariance is (near) zero.
	It is a natural assumption when quite a few pairs of random variables in a random vector $X_i$ are perceived to be marginally independent.
	Sparse covariance estimation methods were introduced in both frequentist and Bayesian perspectives.
	\cite{bickel2008covariance} proposed the thresholded sample covariance estimator, which is constructed by hard-thresholding elements of the sample covariance matrix,
	and \cite{rothman2009generalized} generalized the thresholded sample covariance estimator so that other thresholding transformations such as soft-thresholding and SCAD are covered. 
	\cite{cai2012optimal} have shown the minimax optimality of the thresholded sample covariance estimator under a class of sparse covariances.
	A Bayesian method for sparse covariances is also proposed by \cite{lee2021beta}. They proposed the Beta-mixture shrinkage prior distribution for the covariance matrix and showed the convergence rate of the posterior distribution. According to this posterior convergence rate, this Bayesian method is justified only when $n$ is smaller than $p$. Thus, a Bayesian method having theoretical support under high-dimensional settings is required.

	We propose a post-processed posterior as a non-traditional Bayesian method for the class of sparse covariances.
	The procedure of post-processed posterior consists of two steps as follows. 
	First, the initial posterior sample is generated from the initial posterior distribution, the conjugate inverse-Wishart distribution. The sparse covariance assumption is not considered in this step. Second, the initial posterior samples are transformed via the hard-thresholding function used in \cite{cai2012optimal}. We call the post-processed posterior (PPP) {\it the thresholding post-processed posterior (thresholding PPP)} when we need to clarify the post-processing function. 
	We justify this procedure by showing that the post-processed posterior has the minimax optimal convergence rate under the sparse covariance assumption.
	
	The main feature of the post-processed posterior is that posterior samples are transformed to fit in the constrained parameter space.
	This idea has been employed in various constrained parameter spaces. 
	For example, \cite{dunson2003bayesian} and \cite{lin2014bayesian} used this idea for the ordered finite dimensional space and the monotone continuous function space, respectively.
	\cite{chakraborty2020convergence} also used this idea for the monotone measurable function space and showed the posterior consistency of their methods.
	This idea also was used for constrained covariance parameter spaces other than the sparse covariance structure in \cite{lee2020post} and \cite{lee2021condmean}. 
	The present work expands the application of this idea to the sparse covariance space with the decision-theoretic support. 
	%\cite{patra2018constrained} investigated the post-processing methods in general aspects. They showed the existence of a data-dependent prior distribution for the transformed posterior distribution.
	
	%%%%%%%%%%%%%%%%%%%%%%%%%%%%%%%%%%%%%%%%%%%%

	We apply the post-processed posterior to the estimation of the global minimum variance portfolio (GMVP).
	Suppose that there are $p$ assets and the returns of $p$ assets are expressed as a $p$-dimensional random vector $X$ with covariance $\Sigma_0$. We consider a potfolio as an asset allocation vetor $w\in \bbR^p$ with $w^T\mathbf{1}=1$, where $ \mathbf{1}$ is the $p$-dimensional column vector of ones. 
	The global minimum variance portfolio is defined as the portfolio vector $w$ such that the expected risk of return is minimized.  
	Given a portfolio $w$, the return is $w^T X$ and the expected risk is defined as $Var(w^TX) = w^T \Sigma_0 w$. Thus, the global minimum variance portfolio is 
	$argmin_{w} w^T \Sigma_0 w$, which is given as 
	\bea
	w_{GMVP}(\Sigma_0) :=   \frac{\Sigma_0 \mathbf{1} }{\mathbf{1}^T\Sigma_0 \mathbf{1}},
	\eea
	\citep[See][]{merton1972analytic}.
	Since the GMVP is derived from $\Sigma_0$, 
	covariance estimation methods are commonly employed for the estimation of the global minimum variance portfolio.
	%When there are a large number of assets compared to the number of observations, the high-dimensional covariance estimation method is required. 

	%GMVP추정에 대한 추정에 대한 빈도론과 베이즈 방법 모두 연구가 존재한다. 빈도론의 경우엔 sammple plug-in에 대한 연구들이 진행되어 왔다. 하지만, 이는 고차원에서는 계산 불가능하다. 그런 문제를 해결하기 위해 shrinkage type estimator들이 제시되어 왔다. GMVP에 대한 베이즈 방법들 소개. 위 방법들은 모두 minimax 관점에서 정당화되지 않았다. 
	%Researches on the estimation of GMVP have been conducted in both frequentist and Bayesian perspectives. 
	The traditonal estimator for the GMVP is the plug-in estimator with the sample covariance as 
	\bea
	w_{GMVP}(S_n) = \frac{S_n \mathbf{1} }{ \mathbf{1}^TS_n \mathbf{1} },
	\eea
	where $S_n = n^{-1} \sum_{i=1}^n X_i X_i^T $ is the sample covariance matrix.
	%위 추정량을 응용한 예시들을 몇개 찾아볼것.
	\cite{okhrin2006distributional} also derived the sampling distribution of this plug-in estimator. However, since the sample covariance matrix is singular when $p$ is larger than $n$, this estimator is not feasible for the high-dimensional settings. 
	To overcome the difficulty of high-dimensional settings, shrinkage type estimators were suggested. This type of estimators reduce the risk of estimators by allowing bias of estimators. For example, \cite{frahm2010dominating} and \cite{bodnar2018estimation} proposed the dominating estimator and Bona fide estimator, respectively, as the shrinkage type estimators. 
	
	There are also researches on the Bayesian inference on the global minimum variance portfolio. Once the posterior distribution on the covariance matrix is obtained, the posterior distribution on the GMVP is easily derived. 
	The Bayesian inference on the covariance matrix with inverse-Wishart prior is commonly used for the GMVP and has been largely investigated and used, for instance, in \cite{barry1974portfolio}, \cite{klein1977effect} and \cite{stambaugh1997analyzing}, to name a few. 
	Instead of assigning a prior distribution on the covariance matrix, \cite{bodnar2017bayesian} proposed a Bayesian method to assign prior distributions on the GMVP parameter $w_{GMVP}(\Sigma_0)$ directly. However, this method can not be used for the high-dimensional settings since the corresponding posterior distributions are not proper when $p$ is larger than $n$. 
	%비록 GMVP에 대해서 다양한 방법들이 제시되어 왔고, 오랜기간 동안 그리고 최근까지 연구가 활발하고 진행되고 있지만, 제시된 추정량들에 대한 minimax 분석은 활발하지 않다.

	By applying the thresholding post-processed posterior to the GMVP, we derive the posterior distribution on the GMVP parameter. This posterior distribution has a minimax optimal convergence rate under the sparse covariance assumption. Since the result of minimax analysis is valid even in high-dimensional settings, the proposed method can be used when lots of assets are considered. 
	%we make the Bayesian inference on the GMVP under the high-dimensional setting a and show that the proposed method attains the minimax optimality under the sparse covariance structure. 
	
	The main contributions of this paper are summarized as follows:
	\begin{itemize}
	\item We suggest a Bayesian method for high-dimensional sparse covariances with the optimal minimax rate. Note that the previous work \citep{lee2021beta} gives the convergence rate under the assumption $p<n$. 
	\item We suggest a Bayesian method for estimating the global minimum variance parameter under the high-dimensional sparse covariance assumption with the optimal minimax rate. This aspect of our work is new since there are no results on the minimax rate for the GMVP parameter under the high-dimensional settings. 
	\item Our method makes the interval estimation for the GMVP parameter under the high-dimensional sparse covariance assumption. Although \cite{bodnar2017bayesian} showed the performance of the interval estimation for the GMVP parameter by the simulation study, they considered the case when $p<n$ in the simulation study. 
	\end{itemize}

	%This chapter applies for the Bayesian inference on the global minimum variance portfolio selection, which is the optimized asset allocation vector to minimize the return variance given covariance matrix. There are also studies of the Bayesian methods for the global minimum variance, such as \cite{bodnar2017bayesian} and \cite{bauder2019bayesian}. The Bayesian methods allow subjective beliefs and enable to make a decision reflecting uncertainty such as the credible region.
	%우리는 PPP방법을 GMVP에 대해 적용하는 방법에 대해 서술하고, 그것의 sparse가정하에서 minimax optimal이라는 것을 보인다.

	The rest of this paper is structured as follows.
	In Section \ref{sec:thresPPP}, the algorithm of the thresholding post-processed posterior is given, and we show the minimax optimality of the method.
	In Section \ref{sec:portfolio}, the application of the thresholding post-processed posterior to the global minimum variance portfolio analysis is introduced with the result of minimax optimality.
	In Section \ref{sec:thresnumerical}, the thresholding post-processed posterior is demonstrated via a simulation study and data analysis of S\&P 400 data. The proofs of theorems are given in the supplementary material.

	\section{Thresholding Post-Processed Posterior}\label{sec:thresPPP}
	\sse{Notation}
	For any positive sequences $a_n$ and $b_n$, we denote $a_n = o(b_n)$ if $a_n / b_n \lra 0$ as $n\to\infty$, and $a_n \lesssim b_n$ if there exists a constant $C>0$ such that $a_n \le C b_n$ for all sufficiently large $n$. We denote $a_n \asymp b_n$ if $a_n \lesssim b_n$ and $b_n \lesssim a_n$. For a given matrix $A\in \bbR^{p\times p}$, $\lambda_{\min}(A)$ and $\lambda_{\max}(A)$ denote the minimum and maximum eigenvalues of $A$ and $[A]_i$ denote $i$th row of $A$. Let $||A|| = \{\lambda_{\max}(B^T B) \}^{1/2}$ be the spectral norm of $A \in \bbR^{p\times p}$ and $||A||_q$ be the matrix q-norm defined as 
	\bea
	\argmax_{x \neq 0}\frac{||Ax||_q}{||x||_q},
	\eea
	where $||x||_q$ represents the vector $q$-norm, i.e. $||x||_q = (\sum_{i=1}^p |x_i|^q )^{1/q}$ for $x=(x_1,x_2,\ldots,x_q)$. 
	 For any positive integer $p$, let $[p]=\{1,2,\ldots,p\}$.

	\sse{Algorithm of the post-processed posterior}
	%Setting에 대한 소개. 모수공간도 도입할것.
	Suppose $X_1,  \ldots, X_n$ are independent and identically distributed from the $p$-dimensional multivariate normal distribution with zero mean vector and covariance matrix $\Sigma_0\in \bbR^{p\times p}$, $N_p(0, \Sigma_0)$. We assume $\Sigma_0$ is an element of a class of sparse covariances $\mathcal{G}_q (c_{n,p},M_0,M_1)$ defined as
	$$\mathcal{G}_q (c_{n,p},M_0,M_1) = \{\Sigma=(\sigma_{ij})\in \calC_p : \sigma_{-j,j} \in B_q^{p-1} (c_{n,p}),\sigma_{jj}\le M_0,1\le j\le p , \lambda_{\min}(\Sigma)>M_1 \},$$ 
	where $M_0$, $M_1$ and $c_{n,p}$ are positive real numbers, $\calC_p$ is the set of all $p\times p$-dimensional positive definite matrices, and 
	$$B_q^{p-1} (c) = \{ \xi\in\mathbb{R}^{p-1} : |\xi|^q_{(k)}\le ck^{-1},\text{ for all } k=1,\ldots,p \}, $$
	where $|\xi|_{(k)}$ is the $k$th largest element in the absolute values of the elements of $\xi$. 
	This class of sparse covariances is the same as the class in \cite{cai2012optimal} except the minimum eigenvalue condition.
	
	We propose a post-processed posterior for the class of sparse covariances.
	%The algorithm of the post-processed posterior consists of two steps.
	
	\begin{enumerate}[(a)]
		\item (Initial posterior sampling step)
		We take the inverse-Wishart prior $ IW_p(B_0,\nu_0)$ as the initial prior, whose density function is 
		$$ \pi^i (\Sigma) \propto |\Sigma |^{-\nu_0/2} e^{-\half tr(\Sigma^{-1} B_0)}, \,\,  \Sigma \in \calC_p, $$
		where $B_0 \in \calC_p$ and $\nu_0 > 2p$. 
		The initial posterior distribution is then given as 
		$$\Sigma  \mid  \bbX_n  \sim IW_p(B_0 + n S_n,\nu_0 +n) ,  $$
		where $\bbX_n =(X_1,\ldots, X_n)^T$. 
		Initial posterior samples $\Sigma^{(1)},\ldots, \Sigma^{(N)}$ are generated from the initial posterior distribution.
		\item (Post-processing step)
		The initial posterior samples are transformed via the positive-definite adjusted thresholding function defined as 
		\bea
		H_\gamma^{(\epsilon_n)}(\Sigma) = \begin{cases}
			H_\gamma(\Sigma) +  \Big[ \epsilon_n - \lambda_{\min}\{H_\gamma(\Sigma)\} \Big] I_p &\quad\text{ if } \lambda_{\min}\{H_\gamma(\Sigma)\}<\epsilon_n, \\
			H_\gamma(\Sigma) &\quad\text{ otheriwse}.
		\end{cases},
		\eea
		for positive constants $\gamma$ and $\epsilon_n$, where $H_\gamma(\Sigma)$ is the element-wise hard thresholding function as 
		\bea
		H_\gamma(\Sigma) = \Big(\sigma_{ij} I\Big( |\sigma_{ij}| \ge \gamma \sqrt{\frac{\log p}{n}}\Big)\Big),
		\eea
		where $\sigma_{ij}$ is the $(i,j)$ element of $\Sigma$.
	\end{enumerate}
	
	\sse{Minimax analysis of the post-processed posterior}
	The post-processed posterior is justified in the decision-theoretic perspective. 
	We use the extended P-risk framework, which was introduced by \cite{lee2020post}. This framework is an extension of the P-risk framework in \cite{lee2018optimal} to incorporate post-processed posteriors. When there is no confusion, we refer to the extended P-risk framework as the P-risk framework and review the extended P-risk framework in this section.
	In the framework, we consider the parameter space and the data as $\calG_q$ and $\bbX_n$, respectively.  
	The action of decision theory is post-processed posterior $\pi^{i}(\cdot |\bbX_n; f)$ for a post-processing function $f$ and an initial prior $\pi^{i}$, and the decision rule is a pair of initial prior and post-processing function. 
	Note that a decision rule is defined as a mapping from the data space to the action space, and the post-processed posterior is obtained by combining a pair of initial prior and post-processing functions with data.
	The posterior-loss (P-loss) and posterior-risk (P-risk) are defined as 
	\bea
	\calL(\Sigma_0, \pi^{pp}(\cdot |\bbX_n; f) ) 
	& = & E^{\pi^i} ( ||\Sigma_0 - f(\Sigma) || \big| \bbX_n)  , \\
	\calR(\Sigma_0, \pi^{pp}) & = & E_{\Sigma_0} \calL(\Sigma_0, \pi^{pp}(\cdot |\bbX_n; f) )  \\
	& = & E_{\Sigma_0} E^{\pi^i} ( ||\Sigma_0 - f(\Sigma) || \big| \bbX_n),
	\eea
	where $E_{\Sigma_0}$ and $E^{\pi^i}(\cdot;\bbX_n)$ denote the expectations with respect to $\bbX_n$ and the initial posterior distribution, respectively.
	
	%Using the P-risk framework, the minimax convergence rate is defined and we can judge minimax optimality of a decision rule, a pair of initial prior and post-processing funciton.
	Given the definition of P-risk, we define the P-risk minimax rate. 
	For a real-valued sequence $r_n$, if 
	\bea
	\inf_{\hat{\Sigma}}\sup_{\Sigma_0\in \calG_q} E_{\Sigma_0} E^{\pi^i} ( ||\Sigma_0 - f(\Sigma) || \big| \bbX_n)  \asymp r_n,
	\eea
	as $n\lra \infty$,
	then $r_n$ is the minimax convergence rate. 
	A pair of prior and post-processing function $(\pi^*,f^*)$ is said to attain the P-risk minimax rate $r_n$, if 
	\bea
	\sup_{\Sigma_0\in \calG_q} E_{\Sigma_0} E^{\pi^*} ( ||\Sigma_0 - f^*(\Sigma) || \big| \bbX_n)  \asymp r_n,
	\eea
	as $n\lra \infty$.
	
	Using the extended P-risk framework, we show that the post-processed posterior has the optimal minimax rate. 
	First, we show the upper bound of the convergence rate of the method under the spectral norm using Theorem \ref{thm:upperbound}. If we set $\epsilon_n$ such that $\epsilon_n^2 	\lesssim c_{n,p} (\log p / n)^{(1-q)} +(\log p)/n $, then the convergence rate is $ c_{n,p} (\log p / n)^{(1-q)} +(\log p)/n$, which is the same as the lower bound of the frequentist minimax risk \citep{cai2012optimal}.
	Based on the second remark in \cite{lee2018optimal}, the minimax rate of P-risk is larger than or equal to the frequentist minimax rate, i.e. the lower bound of P-risk minimax rate is also $ c_{n,p} (\log p / n)^{(1-q)} +(\log p)/n$. Thus this rate turns out to be the P-risk minimax rate.
	%Note that a lower bound of the minimax risk of frequentist estimatoUlso alower bound of minimax P-risk by Proposition A.2 in \cite{lee2018thesis}. 
	Thus, the post-processed posterior has minimax optimal convergence rate. 
	\begin{theorem}\label{thm:upperbound}
		Let the prior $\pi^i$ of $\Sigma$ be $IW_p(A_n,\nu_n)$. If $(\nu_n-2p)\vee||A_n||_2\vee \log p=o(n)$ and $\gamma$ is a sufficiently large constant, then, there exists a positive constant $C$ such that 
		$$\bbE_{\Sigma_0}\bbE^{\pi^i} (||H_\gamma^{(\epsilon_n)}(\Sigma)-\Sigma_0||_2^2 |\bbX_n) \le C \Big(c_{n,p}^2 \Big( \frac{\log p}{n}\Big)^{(1-q)} +\frac{\log p}{n} + \epsilon_n^2\Big),$$
		for all sufficiently large $n$.
	\end{theorem}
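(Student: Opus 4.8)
The plan is to first strip off the positive-definite adjustment and the $\epsilon_n^2$ term, reducing the claim to a bound on the plain hard-thresholding operator, and then to run the Cai--Zhou sparsity argument on a posterior draw once its entries are shown to concentrate at the frequentist rate $\sqrt{(\log p)/n}$.

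For the reduction, note that by Weyl's inequality $|\lambda_{\min}\{H_\gamma(\Sigma)\}-\lambda_{\min}(\Sigma_0)|\le \|H_\gamma(\Sigma)-\Sigma_0\|_2$, and since every $\Sigma_0\in\calG_q$ has $\lambda_{\min}(\Sigma_0)>M_1>0$, the added multiple of the identity obeys $[\epsilon_n-\lambda_{\min}\{H_\gamma(\Sigma)\}]_+\le \epsilon_n+\|H_\gamma(\Sigma)-\Sigma_0\|_2$. Hence
\[
\|H_\gamma^{(\epsilon_n)}(\Sigma)-\Sigma_0\|_2\le 2\|H_\gamma(\Sigma)-\Sigma_0\|_2+\epsilon_n,
\]
and after squaring it suffices to show $\bbE_{\Sigma_0}\bbE^{\pi^i}(\|H_\gamma(\Sigma)-\Sigma_0\|_2^2\mid\bbX_n)\lesssim c_{n,p}^2(\log p/n)^{1-q}+\log p/n$.

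The core is a good-event decomposition. Let $A$ be the joint event, over the data $\bbX_n$ and the draw $\Sigma$, on which $\max_{i,j}|\sigma_{ij}-(\sigma_0)_{ij}|\le c_0\sqrt{(\log p)/n}$ for a suitable $c_0$; I split the entrywise error as
\[
\sigma_{ij}-(\sigma_0)_{ij}=\big(\sigma_{ij}-\bbE^{\pi^i}[\sigma_{ij}\mid\bbX_n]\big)+\big(\bbE^{\pi^i}[\sigma_{ij}\mid\bbX_n]-s_{ij}\big)+\big(s_{ij}-(\sigma_0)_{ij}\big).
\]
The third term is the usual sample-covariance fluctuation, uniformly $O(\sqrt{(\log p)/n})$ with overwhelming probability by a Bernstein bound for products of sub-Gaussians. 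The middle term is deterministic: the initial posterior mean equals $(A_n+nS_n)/(\nu_n+n-2p-2)$, so the bias is $(A_n-(\nu_n-2p-2)S_n)/(\nu_n+n-2p-2)$, which the growth conditions $\|A_n\|_2\vee(\nu_n-2p)=o(n)$ keep under control. On $A$ the thresholding operator keeps the entries for which $|(\sigma_0)_{ij}|$ is bounded away from the threshold and discards the genuinely negligible ones, so I bound the spectral norm by the matrix $1$-norm and control each row through the $B_q^{p-1}(c_{n,p})$ constraint exactly as in \cite{cai2012optimal}, obtaining $\|H_\gamma(\Sigma)-\Sigma_0\|_2\lesssim c_{n,p}(\log p/n)^{(1-q)/2}+\sqrt{(\log p)/n}$ on $A$, which squares to the target rate.

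On the complement I would use Cauchy--Schwarz, $\bbE[\|H_\gamma(\Sigma)-\Sigma_0\|_2^2\,I(A^c)]\le\{\bbE\|H_\gamma(\Sigma)-\Sigma_0\|_2^4\}^{1/2}\{P(A^c)\}^{1/2}$, where the fourth moment is polynomial in $n$ and $p$ via the moment formulas for $\|\Sigma\|_2$ under the inverse-Wishart posterior while $P(A^c)$ is exponentially small in $\log p$, rendering this contribution negligible. The main obstacle is the first term of the decomposition: establishing that the posterior fluctuation $|\sigma_{ij}-\bbE^{\pi^i}[\sigma_{ij}\mid\bbX_n]|$ is uniformly $O(\sqrt{(\log p)/n})$ with exponential tails, as required both for $P(A^c)$ and for the moment bound. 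This calls for an entrywise concentration inequality for a draw from $IW_p(A_n+nS_n,\nu_n+n)$; here the effective degrees of freedom $\nu_n+n-2p$ is of order $n$, so the entrywise variance is $O(1/n)$, and I would upgrade this to an exponential tail either through the inverse-Wishart moment recursions or by writing $\Sigma^{-1}$ as a Wishart sum and transferring concentration across the inversion.
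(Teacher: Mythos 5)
Your overall architecture is the right one and, as far as it goes, coincides with the paper's: the reduction
\[
\|H_\gamma^{(\epsilon_n)}(\Sigma)-\Sigma_0\|_2\le 2\|H_\gamma(\Sigma)-\Sigma_0\|_2+\epsilon_n ,
\]
obtained from Weyl's inequality and $\lambda_{\min}(\Sigma_0)>M_1>0$, is exactly how the $\epsilon_n^2$ term is isolated, and the remaining bound is indeed obtained by showing that a posterior draw concentrates entrywise around $\Sigma_0$ at the scale $\sqrt{(\log p)/n}$, running the Cai--Zhou row-wise $\ell_1$ argument over $B_q^{p-1}(c_{n,p})$ on that event, and disposing of the complement by Cauchy--Schwarz with polynomial moment bounds. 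So the route is correct.

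The genuine gap is the piece you yourself flag as an ``obstacle'': the uniform entrywise concentration of a draw from $IW_p(A_n+nS_n,\nu_n+n)$ around $S_n$, with tails strong enough both to survive the union bound over $p^2$ entries and to supply the fourth-moment control on the bad event. This is not a routine appendage to be filled in later; it is the load-bearing lemma of the paper's argument (it is what the post-processing framework of \cite{lee2020post} contributes beyond \cite{cai2012optimal}), and is proved by reducing each entry to the marginal inverse-Wishart law of a $2\times 2$ principal submatrix and extracting sub-exponential tails with effective degrees of freedom of order $n$. You name two plausible routes but carry out neither, so the proof is incomplete precisely at its central step. A second, smaller soft spot is the prior-induced bias $\bbE^{\pi^i}[\sigma_{ij}\mid\bbX_n]-s_{ij}=\{(A_n)_{ij}-(\nu_n-2p-2)s_{ij}\}/(\nu_n+n-2p-2)$: the hypothesis $\|A_n\|_2\vee(\nu_n-2p)=o(n)$ only makes this $o(1)$ entrywise, which is not automatically $O(\sqrt{(\log p)/n})$, so ``kept under control'' is doing real work. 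You must either carry this bias as an explicit additive term through the thresholding risk bound, as the paper's concentration lemma does, or justify why it is absorbed at the threshold scale $\gamma\sqrt{(\log p)/n}$; without that, the good event you define cannot be asserted to have high probability under the stated hypotheses.
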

	The proof of Theorem \ref{thm:upperbound} is given in the supplementary material.

	\section{Application to Bayesian Inference of the Global Minimum Variance Portfolio}\label{sec:portfolio}

	We apply the thresholding post-processed posterior to the Bayesian inference on the global minimum variance portfolio. 
	Suppose $p$-dimensional return vectors $X_1,\ldots,X_n$ are generated from 
	$N_p(0,\Sigma_0)$ and assume $\Sigma_0 \in \calG_q$. 
	Given $N$ samples of the post-processed posterior $H_\gamma^{(\epsilon_n)}(\Sigma^{(1)}),\ldots,H_\gamma^{(\epsilon_n)}(\Sigma^{(N)})$, we suggest the post-processed posterior on the GMVP parameter $w_{GMVP}(\Sigma_0)$ as 
	\bea
	w_{GMVP}(H_\gamma^{(\epsilon_n)}(\Sigma^{(1)})),\ldots,w_{GMVP}(H_\gamma^{(\epsilon_n)}(\Sigma^{(N)})).
	\eea

	We show that the post-processed posterior for the global minimum variance portfolio has the minimax optimal convergence rate. 
	For the minimax analysis, we define a P-risk on the $\bbR^p$, in which portfolio weights reside, as 
	\bea
	E_{\Sigma_0}\Big\{E^{\pi^{i}}\Big(\frac{||w_{GMVP}(\Sigma) - w_{GMVP}(\Sigma_0)||^2}{ ||w_{GMVP}(\Sigma_0)||^2}  \mid\bbX_n\Big)\Big\}.
	\eea
	We use the loss function $||w_{GMVP}(\Sigma) - w_{GMVP}(\Sigma_0)||^2/||w_{GMVP}(\Sigma_0)||^2$ instead of $||w_{GMVP}(\Sigma) - w_{GMVP}(\Sigma_0)||^2$, since arguments on the consistency based on the loss function $||w_{GMVP}(\Sigma) - w_{GMVP}(\Sigma_0)||^2$ are not rational when $p$ goes to infinity. 
	We briefly explain the problem here. We have 
	\bea
	||w_{GMVP}(\Sigma) - w_{GMVP}(\Sigma_0)||^2 &\le& 2 ||w_{GMVP}(\Sigma) ||^2  + 2 ||w_{GMVP}(\Sigma_0)||^2\\
	&\le&  2||w_{GMVP}(\Sigma) ||_1  ||w_{GMVP}(\Sigma) ||_\infty + 2 ||w_{GMVP}(\Sigma_0)||^2 \\
	&\le & 2||w_{GMVP}(\Sigma) ||_\infty + \frac{2 ||\Sigma_0||^2~ || \Sigma_0^{-1}||^2}{p}
	\eea
	This inequality shows that if $||w_{GMVP}(\Sigma) ||_\infty$ goes to zero, then $w_{GMVP}(\Sigma)$ is a consistent estimator. However, even the trivial estimator, equal weight portfolio $(1/p,\ldots,1/p)$, satisfies the condition. Hence, this loss function makes most estimators consistent, which does not seem to be.

	We show the convergence rate of the post-processed posterior for the global minimum variance portfolio and its minimax optimality with the P-risk.
	In Theorem \ref{thm:portfolio}, we show the upper bound of the convergence rate.
	\begin{theorem}\label{thm:portfolio}
		Let the prior $\pi^i$ of $\Sigma$ be $IW_p(A_n,\nu_n)$. If $(\nu_n-2p)\vee||A_n||_2\vee \log p=o(n)$, $\epsilon_n < \lambda_{\min}(\Sigma_0)$,   $c_{n,p}((\log p)/n)^{(1-q)/2} + 4\gamma ((\log p)/n)^{1/2} \lra 0$ as $n\lra \infty$ and $\gamma$ is a sufficiently large constant, then there exists a positive constant $C$ such that    
		\bea
		E_{\Sigma_0}\Big\{E^{\pi^{i}}\Big(\frac{||w_{GMV}(\Sigma) - w_{GMV}(\Sigma_0)||^2 }{||w_{GMV}(\Sigma_0)||^2 } \Big| \bbX_n\Big)\Big\}\le 
		C\Big( \frac{1}{n} + c_{n,p}^2 \Big(\frac{\log p}{n} \Big)^{1-q}  \Big)
		\eea
		for all sufficiently large $n$.
	\end{theorem}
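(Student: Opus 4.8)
The plan is to reduce the normalized portfolio risk to two elementary quantities attached to the contraction $\Sigma_0^{-1}\mathbf 1$, and then to exploit the fact that, unlike the full spectral-norm loss of Theorem \ref{thm:upperbound}, the diagonal estimation error enters only through an $\ell_2$-average. Recalling $w_{GMV}(\Sigma)=\Sigma^{-1}\mathbf 1/(\mathbf 1^T\Sigma^{-1}\mathbf 1)$, write $\widetilde\Sigma:=H_\gamma^{(\epsilon_n)}(\Sigma)$ and set $u=\widetilde\Sigma^{-1}\mathbf 1$, $u_0=\Sigma_0^{-1}\mathbf 1$, $s=\mathbf 1^T u$, $s_0=\mathbf 1^T u_0$. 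The starting point is the identity
\begin{equation*}
\frac{u}{s}-\frac{u_0}{s_0}=\frac{u-u_0}{s}+u_0\,\frac{s_0-s}{s\,s_0},
\end{equation*}
which, after dividing by $\|w_{GMV}(\Sigma_0)\|=\|u_0\|/s_0$, gives
\begin{equation*}
\frac{\|w_{GMV}(\widetilde\Sigma)-w_{GMV}(\Sigma_0)\|}{\|w_{GMV}(\Sigma_0)\|}\le\frac{s_0}{s}\,\frac{\|u-u_0\|}{\|u_0\|}+\frac{|s-s_0|}{s}.
\end{equation*}
First I would introduce the good event $\mathcal A=\{\|H_\gamma(\Sigma)-\Sigma_0\|\le\delta_n\}$ for a suitable $\delta_n\to0$. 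On $\mathcal A$, since $\epsilon_n<\lambda_{\min}(\Sigma_0)$ the positive-definite adjustment is not triggered for large $n$, the diagonal entries survive thresholding, and $\widetilde\Sigma$ has eigenvalues confined to a fixed interval determined by $M_0,M_1$; hence $s,s_0\asymp p$ and $\|u_0\|,\|u\|\asymp\sqrt p$, so the display, after squaring, reduces the relative risk on $\mathcal A$ (up to constants) to $p^{-1}\|u-u_0\|^2+p^{-2}(s-s_0)^2$.

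Next I would apply the resolvent identity $\widetilde\Sigma^{-1}-\Sigma_0^{-1}=\widetilde\Sigma^{-1}(\Sigma_0-\widetilde\Sigma)\Sigma_0^{-1}$, which gives $u-u_0=\widetilde\Sigma^{-1}(\Sigma_0-\widetilde\Sigma)v$ and $s-s_0=u^T(\Sigma_0-\widetilde\Sigma)v$ with $v:=\Sigma_0^{-1}\mathbf 1$, $\|v\|\asymp\sqrt p$. Since $\|\widetilde\Sigma^{-1}\|$ and $\|u\|/\sqrt p$ are bounded on $\mathcal A$, both quantities are controlled by the single term $p^{-1}\|(\Sigma_0-\widetilde\Sigma)v\|^2$, and I would split $\Sigma_0-\widetilde\Sigma=D+R$ into its diagonal part $D=\mathrm{diag}(\sigma_{0,jj}-\sigma_{jj})$ and its off-diagonal part $R$, using $\|(\Sigma_0-\widetilde\Sigma)v\|^2\le2\|Dv\|^2+2\|Rv\|^2$.

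These two pieces are handled by different mechanisms, and this separation is the crux of the theorem. The off-diagonal term is bounded through the operator norm, $\|Rv\|^2\le\|R\|^2\|v\|^2\lesssim p\,\|R\|^2$, where on $\mathcal A$ the bound $\|R\|\lesssim c_{n,p}(\log p/n)^{(1-q)/2}$ is exactly the off-diagonal thresholding estimate used in the proof of Theorem \ref{thm:upperbound} (the part carrying no $(\log p)/n$ factor), so $p^{-1}\|Rv\|^2\lesssim c_{n,p}^2(\log p/n)^{1-q}$. For the diagonal term I would deliberately avoid the operator norm, since $\|D\|=\max_j|\sigma_{0,jj}-\sigma_{jj}|\asymp\sqrt{(\log p)/n}$ would reintroduce the suboptimal $(\log p)/n$ rate; instead, with $v$ deterministic,
\begin{equation*}
E_{\Sigma_0}E^{\pi^i}\|Dv\|^2=\sum_{j=1}^p v_j^2\,E_{\Sigma_0}E^{\pi^i}(\sigma_{jj}-\sigma_{0,jj})^2\le\Big(\max_j E_{\Sigma_0}E^{\pi^i}(\sigma_{jj}-\sigma_{0,jj})^2\Big)\|v\|^2.
\end{equation*}
A direct computation with the inverse-Wishart posterior shows that under $(\nu_n-2p)\vee\|A_n\|_2=o(n)$ each diagonal $\sigma_{jj}$ has posterior mean $(S_n)_{jj}(1+o(1))$ and posterior variance $O(1/n)$, while $E_{\Sigma_0}((S_n)_{jj}-\sigma_{0,jj})^2=O(1/n)$; hence the maximum is $O(1/n)$ and $p^{-1}E_{\Sigma_0}E^{\pi^i}\|Dv\|^2\lesssim1/n$. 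I expect this to be the main obstacle: controlling each diagonal fluctuation of the inverse-Wishart draw at the $1/n$ level and legitimately replacing the maximum by the $v$-weighted average, which is permissible precisely because the GMVP depends on $\Sigma_0$ only through the single contraction $\Sigma_0^{-1}\mathbf 1$, is what removes the $\log p$ factor.

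Finally I would dispose of $\mathcal A^c$. There the relative loss is crudely bounded using $\lambda_{\min}(\widetilde\Sigma)\ge\epsilon_n$, while the posterior- and data-concentration estimates underlying Theorem \ref{thm:upperbound} make $P(\mathcal A^c)$ negligible and keep the relevant moments of $\lambda_{\max}(\widetilde\Sigma)$ bounded; a Cauchy--Schwarz split $E[L\,I(\mathcal A^c)]\le(E[L^2])^{1/2}P(\mathcal A^c)^{1/2}$ then renders this contribution of smaller order. Collecting the on-event bound $1/n+c_{n,p}^2(\log p/n)^{1-q}$ for both $p^{-1}\|u-u_0\|^2$ and $p^{-2}(s-s_0)^2$ with the negligible off-event term yields the claimed rate.
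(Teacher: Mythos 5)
Your overall architecture is sound and is, as far as one can tell from the main text, the essential mechanism behind Theorem \ref{thm:portfolio}: the claimed bound has $1/n$ where the spectral-norm bound of Theorem \ref{thm:upperbound} has $(\log p)/n$, and the only way to gain this is to exploit, exactly as you do, that $w_{GMV}$ depends on $\Sigma$ only through the contraction $\Sigma^{-1}\mathbf{1}$, so that after the resolvent identity the diagonal errors enter through a $v$-weighted $\ell_2$-average rather than through a maximum over $p$ coordinates. The reduction of the normalized loss to $p^{-1}\|(\Sigma_0-H_\gamma^{(\epsilon_n)}(\Sigma))v\|^2$ with $v=\Sigma_0^{-1}\mathbf{1}$, the diagonal/off-diagonal split, and the operator-norm bound $c_{n,p}((\log p)/n)^{(1-q)/2}$ for the off-diagonal remainder are all correct and match the structure the paper's rate forces.

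The one step that does not close as written is the diagonal second moment. From ``posterior mean $=(S_n)_{jj}(1+o(1))$'' you conclude $E_{\Sigma_0}E^{\pi^i}(\sigma_{jj}-\sigma_{0,jj})^2=O(1/n)$, but the $(1+o(1))$ hides the deterministic shift $\bigl((A_n)_{jj}-(\nu_n-2p-2)(S_n)_{jj}\bigr)/(\nu_n+n-2p-2)$, which under the stated hypothesis $(\nu_n-2p)\vee\|A_n\|_2=o(n)$ is only $o(1)$, not $O(n^{-1/2})$; its square can then dominate $1/n$ (take $\nu_n-2p\asymp n^{3/4}$). Your ``hence'' needs either the strengthened condition $(\nu_n-2p)\vee\|A_n\|_2=O(\sqrt{n})$ or an explicit extra term of order $\{((\nu_n-2p)\vee\|A_n\|_2)/n\}^2$ carried through the bound; since the same shift already threatens the $(\log p)/n$ term in Theorem \ref{thm:upperbound}, this is arguably an imprecision inherited from the theorem's hypotheses, but it must be addressed rather than asserted. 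Two smaller loose ends: (i) ``the positive-definite adjustment is not triggered for large $n$'' requires $\epsilon_n$ to be separated from $\lambda_{\min}(\Sigma_0)$ by more than your $\delta_n$, which the bare hypothesis $\epsilon_n<\lambda_{\min}(\Sigma_0)$ does not give; and (ii) on $\mathcal{A}^c$ your crude bound on $\|w_{GMV}(H_\gamma^{(\epsilon_n)}(\Sigma))\|$ necessarily involves $1/\epsilon_n$, and since no lower bound on $\epsilon_n$ is assumed, the Cauchy--Schwarz step needs either such a bound or an off-event estimate that avoids $1/\epsilon_n$ altogether.
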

	The proof of Theorem \ref{thm:portfolio} is given in the supplementary material.

	Next, we give the minimax lower bound of the P-risk using Assouad's lemma \citep[Lemma 2 in ][]{cai2012optimal} and an extended Assouad's lemma introduced in Lemma 3 of \cite{cai2012optimal}.
	First, we apply Assouad's lemma to space $\calG^{(1)}$ defined as 
	\bean\label{calG1}
	\calG^{(1)} = \{\Sigma(\theta):  \Sigma(\theta)^{-1} = M^{-1} I_p + \sum_{m=1}^{p'} \theta_m \frac{\tau}{\sqrt{n}} I(i=j=m), \theta= (\theta_1,\theta_2,\ldots,\theta_{p'}) \in \{0,1\}^{p'} \},
	\eean 
	where $p'=\lfloor p/2 \rfloor$ and $M= M_0+M_1$. Since $\calG^{(1)}\subset \calG_q(c_{n,p},M_0,M_1)$ for all sufficiently large $n$ when $\tau$ is a constant,
	Assouad's lemma gives
	\bea
	&& \sup_{\Sigma_0\in \calG_q} 2^2 E_{\Sigma_{0}} ||w_{GMV}(\Sigma(\theta'))-w_{GMV}(\Sigma(\theta))||^2 \\
	&\ge& 
	\max_{\theta \in \{0,1\}^{p'} } 2^2E_{\Sigma(\theta)} ||w_{GMV}(\Sigma(\theta'))-w_{GMV}(\Sigma(\theta))||^2\\
	&\ge& \min_{H(\theta,\theta')\ge 1} \frac{||w_{GMV}(\Sigma(\theta'))-w_{GMV}(\Sigma(\theta))||^2}{H(\theta,\theta')} \frac{p'}{2} \min_{H(\theta,\theta')=1} ||\bbP_\theta \wedge \bbP_{\theta'}||.
	\eea 
	Using Lemma \ref{lemma:lowerbound1}, we obtain the first minimax lower bound as 
	\bean\label{formula:minimax1}
	\sup_{\Sigma_0\in \calG_q} 2^2 E_{\Sigma_{0}} ||w_{GMV}(\Sigma(\theta'))-w_{GMV}(\Sigma(\theta))||^2 &\ge& \frac{C}{np},
	\eean
	for a positive constant $C$.
	
	\begin{lemma}\label{lemma:lowerbound1}
		If $\tau/\sqrt{n} \le M/3$ and $\tau/M \le 1/3$, then there exists a positive constant $C$ such that
		\bea
		\min_{H(\theta,\theta')\ge 1} \frac{||w_{GMV}(\Sigma(\theta'))-w_{GMV}(\Sigma(\theta))||^2}{H(\theta,\theta')} \frac{p'}{2} \min_{H(\theta,\theta')=1} ||\bbP_\theta \wedge \bbP_{\theta'}|| \ge \frac{C}{np},
		\eea
		for all sufficiently large $n$. 
	\end{lemma}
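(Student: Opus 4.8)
The plan is to exploit the fact that every covariance $\Sigma(\theta)$ in $\calG^{(1)}$ is \emph{diagonal}, so that both the GMVP map and the affinity between the Gaussian measures decouple across coordinates. Writing $a = M\tau/\sqrt{n}$ and $s(\theta)=\sum_{m=1}^{p'}\theta_m$, the matrix $\Sigma(\theta)$ is diagonal with $m$-th entry $d_m(\theta)=M$ when $\theta_m=0$ (in particular for every $m>p'$) and $d_m(\theta)=M/(1+a)$ when $\theta_m=1$, so that $w_{GMV}(\Sigma(\theta))_m = d_m(\theta)/T(\theta)$ with normalizer $T(\theta)=\sum_k d_k(\theta)=M\big(p-s(\theta)\frac{a}{1+a}\big)$. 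The two standing assumptions $\tau/\sqrt n\le M/3$ and $\tau/M\le 1/3$ keep the precision positive definite and $a\in(0,1)$ small, so that $d_m(\theta)\in[M/(1+a),M]$ and $T(\theta)\asymp pM$ \emph{uniformly} in $\theta$; these uniform bounds are what make the estimates below clean. The left-hand side factors as (i) $\min_{H(\theta,\theta')\ge 1}\|w_{GMV}(\Sigma(\theta'))-w_{GMV}(\Sigma(\theta))\|^2/H(\theta,\theta')$, (ii) $p'/2$, and (iii) $\min_{H(\theta,\theta')=1}\|\bbP_\theta\wedge\bbP_{\theta'}\|$, and it suffices to prove that (i) is bounded below by a constant multiple of $1/(np^2)$ and (iii) by an absolute constant; combined with (ii)$\asymp p$ this yields the claimed $C/(np)$.

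For (i) I would fix $\theta,\theta'$ with Hamming distance $H=H(\theta,\theta')$ and split the squared Euclidean norm into coordinates where $\theta,\theta'$ differ and those where they agree. On a \emph{differing} coordinate $j$, $w_{GMV}(\Sigma(\theta'))_j-w_{GMV}(\Sigma(\theta))_j$ equals $d_j(\theta')/T(\theta')-d_j(\theta)/T(\theta)$; since the two possible values of $d_j$ differ by $Ma/(1+a)$ while $T\asymp pM$, each such coordinate contributes an amount of order $a^2/p^2=M^2\tau^2/(np^2)$ to the squared norm, and there are exactly $H$ of them, giving a contribution of order $H\,a^2/p^2$. On an \emph{agreeing} coordinate $m$ the numerator $d_m$ is unchanged, so the difference is $d_m\big(T(\theta)^{-1}-T(\theta')^{-1}\big)$; because $|T(\theta)-T(\theta')|=M|s(\theta)-s(\theta')|\frac{a}{1+a}\le MHa$, each agreeing coordinate contributes $O(H^2a^2/p^4)$ and, summed over at most $p$ of them, $O(H^2a^2/p^3)$. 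Since $H\le p'\le p/2$, the agreeing-coordinate total is dominated by the differing-coordinate total, so $\|w_{GMV}(\Sigma(\theta'))-w_{GMV}(\Sigma(\theta))\|^2$ is bounded below by a constant multiple of $H a^2/p^2$, and dividing by $H$ gives (i) bounded below by $C' M^2\tau^2/(np^2)$ uniformly over $H\ge 1$. This coordinatewise bookkeeping — isolating the dominant flipped-coordinate term and showing the normalizer fluctuation is genuinely lower order — is the main obstacle, since it is where the nonlinearity of $w_{GMV}$ and the $\theta$-dependence of $T(\theta)$ must be controlled simultaneously.

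For (iii) I would use that $\bbP_\theta$ is the $n$-fold product of $N_p(0,\Sigma(\theta))$ and that, for $H(\theta,\theta')=1$, the two Gaussians differ in a single variance. Le Cam's inequality gives $\|\bbP_\theta\wedge\bbP_{\theta'}\|\ge \frac12\big(\int\sqrt{d\bbP_\theta\,d\bbP_{\theta'}}\big)^2$, and because both measures are products the Hellinger affinity factorizes as $\rho_1^{\,2n}$, where $\rho_1$ is the single-observation, single-coordinate Gaussian affinity between variances $M$ and $M/(1+a)$. A direct expansion gives $1-\rho_1 = O(a^2)=O(\tau^2/n)$, whence $\rho_1^{\,2n}\ge e^{-O(\tau^2)}$, a constant bounded away from $0$; here $\tau/\sqrt n\le M/3$ keeps the variance ratio bounded so the expansion is valid. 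Thus (iii) is bounded below by an absolute constant $c$. Finally, multiplying (i), (ii) and (iii) yields a lower bound of order $\big(M^2\tau^2/(np^2)\big)\cdot(p'/2)\cdot c\asymp C/(np)$, which is the claim.
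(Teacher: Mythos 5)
Your overall strategy---exploit the diagonality of every $\Sigma(\theta)$ in $\calG^{(1)}$ so that the weight map and the testing affinity decouple across coordinates, bound the per-flip squared weight change below by a constant times $\tau^2/(np^2)$, and bound the affinity below by an absolute constant via the product structure of the Hellinger affinity---is the right one, and your part (iii) is fine. The gap is in part (i), in the lower bound for the differing coordinates. On a coordinate $j$ with $\theta_j\neq\theta'_j$ the difference is $d_j(\theta)/T(\theta)-d_j(\theta')/T(\theta')$, and \emph{both} the numerator and the normalizer change; your bookkeeping charges the normalizer change only to the agreeing coordinates. Writing $b=a/(1+a)$, the numerator change contributes about $b/p$, while the normalizer change contributes up to $M|T(\theta)-T(\theta')|/(T(\theta)T(\theta'))\le 2bH/p^2$, which for $H\asymp p$ is of the \emph{same} order $b/p$ and of potentially opposite sign---it is not ``genuinely lower order,'' as you assert. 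That this is not a fussy objection is shown by the degenerate case: if all $p$ coordinates were flippable, then $\theta=(0,\dots,0)$ and $\theta'=(1,\dots,1)$ give $\Sigma(\theta)=MI_p$ and $\Sigma(\theta')=\big(M/(1+a)\big)I_p$, whose GMVP weights are both $(1/p,\dots,1/p)$, so every ``differing'' coordinate contributes exactly zero, whereas your stated justification would assign each of them order $a^2/p^2$. (Also note that for a lower bound on a sum of squares you may simply discard the agreeing coordinates; the comparison you make between agreeing and differing totals is not where the constraint $H\le p'\le p/2$ is actually needed.)

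The fix uses $p'=\lfloor p/2\rfloor$ at precisely this point. Either compute the difference exactly: with $s=s(\theta)$, $s'=s(\theta')$, $\theta_j=0$, $\theta'_j=1$, one gets $w_j(\theta)-w_j(\theta') = b\big(p-s'+s(1-b)\big)\big/\big((p-sb)(p-s'b)\big)\ \ge\ b(p/2)/p^2 = b/(2p)$, because $s'\le p'\le p/2$ forces the numerator to stay bounded away from the cancellation; or argue at the level of norms: the ``main'' vector $\big(d(\theta)-d(\theta')\big)/T(\theta)$ has squared norm at least $Hb^2/p^2$, the ``normalizer perturbation'' vector has squared norm at most $(H/p)(1-b)^{-4}$ times that, and this ratio is bounded by a constant strictly less than one since $H\le p'\le p/2$ and $b\to 0$, so the triangle inequality gives $\|w_{GMV}(\Sigma(\theta))-w_{GMV}(\Sigma(\theta'))\|^2\ge c\,Hb^2/p^2$. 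With either repair, (i) is bounded below by a constant times $b^2/p^2\asymp M^2\tau^2/(np^2)$, and multiplying by $p'/2$ and the constant affinity yields $C/(np)$ as claimed. So the route is correct, but the key inequality requires the exact cancellation analysis (or the norm-level comparison) that your sketch omits.
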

	The proof of Lemma \ref{lemma:lowerbound1} is given in the supplementary material.
	
	Next, we apply the extended Assouad's lemma \citep[Lemma 3]{cai2012optimal}.
	Before using this lemma for the GMVP problem, we review this lemma here.
	Let $B\subset \bbR^p \setminus\{ \mathbf{0}\}$ be a finite set and let $\Lambda \subset B^r$ with a positive integer $r$. Then, we define a finite set $\Theta$ as
	\bea
	\Theta = \{0,1\}^r \otimes \Lambda.
	\eea
	We define projections $\gamma: \Theta \mapsto \{0,1\}^r$ and $\lambda: \Theta\mapsto \Lambda$ as
	$\gamma(\theta_0) = \gamma_0$ and $\lambda(\theta_0) = \lambda_0$, respectively, for
	$\theta_0 = (\gamma_0,\lambda_0)$, $\gamma_0 \in \{0,1\}^r$ and $\lambda_0\in \Lambda$. We also let $\gamma_i(\theta_0)$ and $\lambda_i(\theta_0)$ be
	$i$th element of $\gamma_0$ and  $i$th row-vector of $\lambda_0$, respectively.
	The extended Assouad's lemma (Lemma 3 in \cite{cai2012optimal}) gives
	\bea
	\max_{\theta\in\Theta} 2^2 E_{\Sigma(\theta)} d^2(T,\psi(\Sigma(\theta))) \ge 
	\min_{H(\gamma(\theta),\gamma(\theta'))\ge 1} \frac{d^2(\psi(\Sigma(\theta)),\psi(\Sigma(\theta')))}{H(\gamma(\theta),\gamma(\theta'))} \frac{r}{2}
	\min_{1\le i\le r} || \bar{\mathbb{P}}_{i,0} \wedge \bar{\mathbb{P}}_{i,1} ||,
	\eea
	where 
	\bea
	\bar{\mathbb{P}}_{i,j} &=& \frac{1}{2^{r-1} Card(\Lambda)} \sum_{\theta\in\Theta_{i,j}}\bbP_\theta,\\
	\Theta_{i,j} &=& \{\theta\in\Theta : \gamma_i(\theta) = j\},
	\eea
	for $j=0,1$ and $i=1,2,\ldots,r$.

	For the application of the extended Assouad's lemma, the elements $r$, $\Lambda$ and $\psi(\Sigma(\theta))$ need to be specified. 
	We set $r=\lfloor p/4 \rfloor$ and $\psi(\Sigma(\theta)) = w_{GMVP}(\Sigma(\theta))$.
	Given a $p$-dimensional vector $\lambda_i$, let $A_i(\lambda_i)$ be a $p\times p$ symmetric matrix with $i$th row and column are equal to $\lambda_i$ and zero for the other elements. We set $\Lambda$ as 
	\bea
	\Lambda = \Lambda_k := \{(\lambda_1,\lambda_2,\ldots,\lambda_r) \in B^r &:& (\lambda_i)_j=0 \text{ or }1, ||\lambda_i||_0 = k, ||(\lambda_i)_{1:(p-r)}||_1 =0,\\
	&& ||[\sum_{i=1}^r A_i(\lambda_i)]_{j}||_1 \vee ||[\sum_{i=1}^r A_i(\lambda_i)^T]_{j}||_1 \le 2k  , \text{ for } i\in[r], j\in[p] \}.
	\eea
	Given the notations, we define a covariance space $\calG^{(2)}$ as
	\bean\label{calG2}
	\calG^{(2)}(k,\epsilon_{n,p})  = \{ \Sigma(\theta) : \Sigma(\theta) = MI_p +  \epsilon_{n,p}\sum_{i=1}^r \gamma_i(\theta) A_i(\lambda_i(\theta)), \theta = (\gamma,\lambda)\in \{0,1\}^r \otimes \Lambda_k \}.
	\eean
	When $k= \max(\lfloor  c_{n,p}\epsilon_{n,p}^{-q}\rfloor,0)$, $c_{n,p} \le C_0 n^{(1-q)/2} (\log p)^{-(3-q)/2}$ for a positive constant $C_0$ and $\epsilon_{n,p} = (0.25(\log 2) \min(1,M_1)C_0^{-1})^{1/(1-q)} ((\log p)/n)^{1/2}$, we have
	$$2k\epsilon_{n,p} \le 0.5(\log 2) \min(1,M_1),$$
	 thus, $\calG^{(2)}(k,\epsilon_{n,p}) \subset \calG_q$.
	The extended Assouad's lemma gives
	\bea
	&& \sup_{\Sigma_0\in \calG_q} 2^2 E_{\Sigma_{0}} ||w_{GMV}(\Sigma(\theta'))-w_{GMV}(\Sigma(\theta))||^2  \\
	&\ge&
	\min_{H(\gamma(\theta),\gamma(\theta'))\ge 1} \frac{||w_{GMV}(\Sigma(\theta'))-w_{GMV}(\Sigma(\theta))||^2 }{H(\gamma(\theta),\gamma(\theta'))} \frac{r}{2}
	\min_{1\le i\le r} || \bar{\mathbb{P}}_{i,0} \wedge \bar{\mathbb{P}}_{i,1} ||.
	\eea
	Using Lemma \ref{lemma:lowerbound2}, we obtain 
	\bean
	\sup_{\Sigma_0\in \calG_q} 2^2 E_{\Sigma_{0}} ||w_{GMV}(\Sigma(\theta'))-w_{GMV}(\Sigma(\theta))||^2   \ge \frac{ C c_{n,p}^2}{p}  \Big(\frac{\log p}{n}\Big)^{1-q} ,\label{formula:minimax2}
	\eean
	for a positive constant $C$.
	\begin{lemma}\label{lemma:lowerbound2}
		Suppose the notation of $\calG^{(2)}$ in \eqref{calG2}. Assume $c_{n,p} \le C_0  n^{(1-q)/2} (\log p)^{-(3-q)/2}$ for some positive constant $C_0$. There exists a positive constant $C$ such that 
		\bea
		\min_{H(\gamma(\theta),\gamma(\theta'))\ge 1} \frac{||w_{GMV}(\Sigma(\theta))-w_{GMV}(\Sigma(\theta'))||^2}{H(\gamma(\theta),\gamma(\theta'))} \frac{r}{2}
		\min_{1\le i\le r} || \bar{\mathbb{P}}_{i,0} \wedge \bar{\mathbb{P}}_{i,1} || \ge \frac{ C c_{n,p}^2}{p}  \Big(\frac{\log p}{n}\Big)^{1-q} ,
		\eea
		for all sufficiently large $n$.
	\end{lemma}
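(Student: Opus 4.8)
The plan is to bound the two factors in the product separately: the normalized separation $\|w_{GMV}(\Sigma(\theta))-w_{GMV}(\Sigma(\theta'))\|^2/H(\gamma(\theta),\gamma(\theta'))$ of the portfolio functional, and the affinity $\min_{1\le i\le r}\|\bar{\mathbb{P}}_{i,0}\wedge\bar{\mathbb{P}}_{i,1}\|$. The affinity factor does not involve $w_{GMV}$ and depends only on the covariance family $\calG^{(2)}$ and the mixtures $\bar{\mathbb{P}}_{i,j}$, which are built exactly as in \cite{cai2012optimal}; I would therefore reuse their chi-square computation to conclude that $\min_i\|\bar{\mathbb{P}}_{i,0}\wedge\bar{\mathbb{P}}_{i,1}\|\ge c$ for a constant $c>0$. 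The hypotheses $c_{n,p}\le C_0 n^{(1-q)/2}(\log p)^{-(3-q)/2}$, together with $k\asymp c_{n,p}\epsilon_{n,p}^{-q}$ and $\epsilon_{n,p}^2\asymp(\log p)/n$, are precisely what keep the relevant exponent bounded and hence the affinity bounded away from zero; this is where the inequality $2k\epsilon_{n,p}\le 0.5(\log 2)\min(1,M_1)=:c_0$ is used.

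The substantive new work is the separation bound, and here the key is the block structure of $\calG^{(2)}$. Since each $\lambda_i$ is supported on the last $r$ coordinates while $i$ ranges over $\{1,\dots,r\}$, and $r=\lfloor p/4\rfloor$ makes these index sets disjoint, the perturbation never touches the diagonal and, more importantly, the $i$th coordinate of $\Sigma(\theta)\mathbf{1}$ for $i\le r$ equals $M+\epsilon_{n,p}k\gamma_i(\theta)$, independent of the particular $\lambda_i$. Writing $D(\theta)=\mathbf{1}^T\Sigma(\theta)\mathbf{1}=Mp+2k\epsilon_{n,p}\|\gamma(\theta)\|_0$, I obtain the clean formula $w_{GMV}(\Sigma(\theta))_i=(M+\epsilon_{n,p}k\gamma_i(\theta))/D(\theta)$ for every $i\le r$, so in each of the first $r$ coordinates the functional reduces to a scalar depending only on $\gamma_i$ and $k$.

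First I would lower bound the coordinatewise difference on the $H$ indices where $\gamma(\theta)$ and $\gamma(\theta')$ disagree. For such an $i$, say $\gamma_i(\theta)=1$ and $\gamma_i(\theta')=0$, I split
\[ w_{GMV}(\Sigma(\theta))_i-w_{GMV}(\Sigma(\theta'))_i=\frac{\epsilon_{n,p}k}{D(\theta)}+M\,\frac{D(\theta')-D(\theta)}{D(\theta)D(\theta')}. \]
Using $Mp\le D(\theta),D(\theta')\le p(M+c_0/4)$ (from $2k\epsilon_{n,p}\le c_0$ and $\|\gamma\|_0\le r\le p/4$) and $|D(\theta')-D(\theta)|\le 2k\epsilon_{n,p}H\le 2k\epsilon_{n,p}r$, the leading term is at least $\epsilon_{n,p}k/[p(M+c_0/4)]$ while the denominator-mismatch correction is at most $k\epsilon_{n,p}/(2Mp)$; since $M>c_0/4$, the difference is at least $c_1 k\epsilon_{n,p}/p$ for a constant $c_1>0$. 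The first $r$ coordinates are orthogonal, so summing over the $H$ disagreeing indices gives $\|w_{GMV}(\Sigma(\theta))-w_{GMV}(\Sigma(\theta'))\|^2\ge c_1^2 H k^2\epsilon_{n,p}^2/p^2$, i.e. the normalized separation is $\gtrsim k^2\epsilon_{n,p}^2/p^2$.

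Finally I would assemble the pieces. With $r=\lfloor p/4\rfloor\asymp p$, the affinity bounded below by a constant, and $k^2\epsilon_{n,p}^2\asymp c_{n,p}^2((\log p)/n)^{1-q}$ (assuming the informative regime $k\ge 1$), the product is $\gtrsim (k^2\epsilon_{n,p}^2/p^2)\cdot p\cdot c\asymp c_{n,p}^2((\log p)/n)^{1-q}/p$, which is the claimed bound. I expect the main obstacle to be the separation step rather than the affinity step: the normalizations $D(\theta)$ and $D(\theta')$ genuinely differ across $\theta,\theta'$, and one must verify that the mismatch term cannot cancel the signal. This is exactly where $r\le p/4$ (equivalently $p-2r\ge p/2$) and $M>c_0/4$ enter, and it is the place where the portfolio functional, rather than the raw covariance, must be shown to separate the parameters at the correct rate.
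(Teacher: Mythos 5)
Your overall strategy is the natural two--factor one that the lemma's placement after the extended Assouad machinery calls for: recycle the Cai--Zhou chi--square/affinity computation for $\min_i\|\bar{\mathbb{P}}_{i,0}\wedge\bar{\mathbb{P}}_{i,1}\|$ (the column-sum constraint $\|[\sum_i A_i(\lambda_i)]_j\|_1\le 2k$ in $\Lambda_k$ and the hypothesis on $c_{n,p}$ exist precisely to make that transfer work), and spend the effort on the separation of the portfolio functional, exploiting that the row indices $i\le r$ and the supports $\{p-r+1,\dots,p\}$ of the $\lambda_i$ are disjoint so that the relevant coordinate of the perturbation is exactly $\epsilon_{n,p}k\gamma_i(\theta)$. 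Your bookkeeping for the separation ($Mp\le D(\theta)\le p(M+c_0/4)$, signal $\epsilon_{n,p}k/D(\theta)$ dominating the normalization mismatch $\le k\epsilon_{n,p}/(2Mp)$ because $M>c_0/4$, then $r\asymp p$ and $k\epsilon_{n,p}\asymp c_{n,p}\epsilon_{n,p}^{1-q}$) assembles correctly to the claimed rate, modulo the caveat you yourself note that the construction is informative only when $k=\lfloor c_{n,p}\epsilon_{n,p}^{-q}\rfloor\ge 1$.

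The one genuine gap is which functional you differentiate. You take the displayed formula $w_{GMVP}(\Sigma)=\Sigma\mathbf{1}/\mathbf{1}^T\Sigma\mathbf{1}$ at face value and compute $[\Sigma(\theta)\mathbf{1}]_i=M+\epsilon_{n,p}k\gamma_i(\theta)$. But that display is a typo: the minimizer of $w^T\Sigma w$ over $w^T\mathbf{1}=1$ is $\Sigma^{-1}\mathbf{1}/\mathbf{1}^T\Sigma^{-1}\mathbf{1}$, and the surrounding text confirms the inverse is intended (the plug-in estimator is called infeasible because $S_n$ is \emph{singular}, $\calG^{(1)}$ parametrizes $\Sigma(\theta)^{-1}$ directly, and the proof of Theorem \ref{thm:minimaxlower} bounds $\|w_{GMV}(\Sigma_0)\|^2$ by a quantity involving $\|\Sigma_0^{-1}\|$). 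For $\calG^{(2)}$ you therefore need $[\Sigma(\theta)^{-1}\mathbf{1}]_i$, which is not in closed form: write $\Sigma(\theta)=M(I_p+E/M)$ with $E=\epsilon_{n,p}\sum_i\gamma_i(\theta)A_i(\lambda_i)$ and expand the Neumann series. The first-order term contributes $-M^{-2}\epsilon_{n,p}k\gamma_i(\theta)$ to coordinate $i\le r$, the same magnitude as your signal, but you must additionally show the higher-order terms (controlled via $\|E\|_1\vee\|E\|_\infty\le 2k\epsilon_{n,p}\le c_0$) cannot cancel its sign; this is where the specific smallness of the constant in $\epsilon_{n,p}$ relative to $M=M_0+M_1$ is actually consumed. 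Once that perturbation step is supplied, the remainder of your argument and the final rate go through unchanged.
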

	The proof of Lemma \ref{lemma:lowerbound2} is given in the supplementary material.
	
	Using Lemmas \ref{lemma:lowerbound1} and \ref{lemma:lowerbound2}, we obtain Theorem \ref{thm:minimaxlower}, the theorem of the minimax lower bound. The minimax lower bound is the same as the upper bound given in Theorem \ref{thm:portfolio}; thus, the post-processed posterior is minimax optimal in respect of the global minimum variance portfolio. 
	\begin{theorem}\label{thm:minimaxlower}
		Assume $c_{n,p} \le C_0  n^{(1-q)/2} (\log p)^{(3-q)/2}$ for some positive constant $C_0$. There exists a positive constant $C$ such that 
		\bea
		\inf_{\hat{\Sigma}}\sup_{\Sigma_0\in \calG_q} E_{\Sigma_{0}} \frac{||w_{GMV}(\hat\Sigma)-w_{GMV}(\Sigma_0)||^2 }{||w_{GMV}(\Sigma_0)||^2 }  \ge    C\Big( \frac{1}{n} + c_{n,p}^2 \Big(\frac{\log p}{n} \Big)^{1-q}  \Big),
		\eea 
		for all sufficiently large $n$.
	\end{theorem}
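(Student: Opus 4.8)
The plan is to assemble Theorem~\ref{thm:minimaxlower} from the two subfamily lower bounds already prepared in the excerpt: the bound \eqref{formula:minimax1}, obtained from Lemma~\ref{lemma:lowerbound1} on the diagonal family $\calG^{(1)}$ in \eqref{calG1}, and the bound \eqref{formula:minimax2}, obtained from Lemma~\ref{lemma:lowerbound2} on the sparse family $\calG^{(2)}$ in \eqref{calG2}. Both are (four times) lower bounds for the \emph{unnormalized} minimax risk $\inf_{\hat\Sigma}\sup_{\Sigma_0\in\calG_q}E_{\Sigma_0}||w_{GMV}(\hat\Sigma)-w_{GMV}(\Sigma_0)||^2$, of orders $(np)^{-1}$ and $c_{n,p}^2 p^{-1}(\log p/n)^{1-q}$ respectively, whereas the theorem concerns the \emph{normalized} risk with the extra factor $||w_{GMV}(\Sigma_0)||^{-2}$. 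So the proof reduces to transferring each unnormalized bound to the normalized loss and then combining the two additively.

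For the transfer I would first record the elementary identity, valid for symmetric positive definite $\Sigma$,
\bea
||w_{GMV}(\Sigma)||^2 = \frac{\mathbf{1}^T \Sigma^2 \mathbf{1}}{(\mathbf{1}^T \Sigma \mathbf{1})^2},
\eea
which follows from $w_{GMV}(\Sigma)=\Sigma\mathbf{1}/(\mathbf{1}^T\Sigma\mathbf{1})$. Bounding the numerator by $\lambda_{\max}(\Sigma)^2 p$ and the denominator below by $\lambda_{\min}(\Sigma)^2 p^2$ gives $||w_{GMV}(\Sigma)||^2 \le (\lambda_{\max}(\Sigma)/\lambda_{\min}(\Sigma))^2/p$. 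On $\calG^{(1)}$, $\Sigma(\theta)$ is diagonal with entries in a fixed ratio of $M$ (since $\Sigma(\theta)^{-1}$ perturbs $M^{-1}I_p$ by a diagonal term of size $\tau/\sqrt n$), and on $\calG^{(2)}$, $\Sigma(\theta)=MI_p+E$ with $||E||\le \max_j\sum_i|E_{ij}|\le 2k\epsilon_{n,p}\le 0.5(\log 2)\min(1,M_1)<M$ by the row-sum constraint in $\Lambda_k$; in both cases the condition number is bounded by an absolute constant for all large $n$. Hence there is a constant $c_2$ with $||w_{GMV}(\Sigma_0)||^2 \le c_2/p$ uniformly over each subfamily.

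Because the normalizer is deterministic it factors out of $E_{\Sigma_0}$, so the integrand in the normalized risk is at least $(p/c_2)||w_{GMV}(\hat\Sigma)-w_{GMV}(\Sigma_0)||^2$ pointwise for every estimator $\hat\Sigma$. Restricting the supremum over $\calG_q$ to $\calG^{(j)}$ and taking the infimum over $\hat\Sigma$ therefore yields
\bea
\inf_{\hat\Sigma}\sup_{\Sigma_0\in\calG_q} E_{\Sigma_0}\frac{||w_{GMV}(\hat\Sigma)-w_{GMV}(\Sigma_0)||^2}{||w_{GMV}(\Sigma_0)||^2} &\ge& \frac{p}{c_2}\inf_{\hat\Sigma}\sup_{\Sigma_0\in\calG^{(j)}} E_{\Sigma_0}||w_{GMV}(\hat\Sigma)-w_{GMV}(\Sigma_0)||^2.
\eea
Inserting \eqref{formula:minimax1} ($j=1$) gives a bound of order $p\cdot(np)^{-1}=n^{-1}$, and inserting \eqref{formula:minimax2} ($j=2$) gives a bound of order $p\cdot c_{n,p}^2 p^{-1}(\log p/n)^{1-q}=c_{n,p}^2(\log p/n)^{1-q}$. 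Since both are lower bounds for the same quantity, taking the maximum and using $\max(a,b)\ge (a+b)/2$ delivers the additive rate $C\{n^{-1}+c_{n,p}^2(\log p/n)^{1-q}\}$, which matches the upper bound of Theorem~\ref{thm:portfolio} and proves minimax optimality.

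The main obstacle is the uniform control of $||w_{GMV}(\Sigma_0)||^2$ in the second step: I must confirm the condition numbers stay bounded on both subfamilies \emph{simultaneously}, which on $\calG^{(2)}$ rests on turning the combinatorial row-sum constraint defining $\Lambda_k$ into the genuine spectral-norm bound $||E||\le\max_j\sum_i|E_{ij}|$, and on $\calG^{(1)}$ on checking the diagonal entries of $\Sigma(\theta)$ remain within a fixed multiple of $M$. A secondary point, though a reassuring one, is that because the normalizer depends only on $\Sigma_0$ the factorization above is valid estimator-by-estimator, so no delicate interchange of $\inf_{\hat\Sigma}$ and $\sup_{\Sigma_0}$ is needed.
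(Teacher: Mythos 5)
Your proposal is correct and follows essentially the same route as the paper: the paper's own (very terse) proof likewise bounds $\|w_{GMV}(\Sigma_0)\|^2 \le \|\Sigma_0^{-1}\|\,\|\Sigma_0\|^2/p$ so that the normalizer contributes a factor of order $p$, and then combines the two subfamily lower bounds \eqref{formula:minimax1} and \eqref{formula:minimax2} to get $n^{-1}+c_{n,p}^2(\log p/n)^{1-q}$. Your write-up simply makes explicit the condition-number control on $\calG^{(1)}$ and $\calG^{(2)}$ and the $\max(a,b)\ge(a+b)/2$ step that the paper leaves implicit.
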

	\begin{proof}
		Since
		\bea
		||w_{GMV}(\Sigma_0)||^2  \le \frac{||\Sigma_0^{-1}|| ~||\Sigma_0||^2}{p},
		\eea
		the proof is completed by collecting the first and second lower bounds in \eqref{formula:minimax1} and \eqref{formula:minimax2}.
	\end{proof}
	
	In the comparison of the minimax rate with that under the spectral norm (Theorem \ref{thm:upperbound}),
	while the convergence rate under the loss function $p ||w_{GMV}(\Sigma) - w_{GMV}(\Sigma_0)||^2$ is $c_{n,p}^2 (\log p)/n + 1/n$, the convergence rate under the spectral norm is $c_{n,p}^2 ((\log p)/n)^{1-q} + (\log p)/n$. The term $1/n$ is replaced with $(\log p)/n$ for the convergence rate of the global minimum variance portfolio.

	\section{Simulation Studies}\label{sec:thresnumerical}
	
	\sse{Simulation Study}
	
	In this section, we compare the post-processed posterior with the thresholded sample covariance \citep{cai2012optimal}, the Bona fide estimator \citep{bodnar2018estimation}, and the conventional Bayesian method by the inverse-Wishart prior and Beta mixture shrinkage prior \citep{lee2021beta}.
Let $p=100$ and we consider two $100\times 100$ true covariances $\Sigma_0^{(1)}$ and $\Sigma_0^{(2)}$.
	We define $\Sigma_0^{(1)}=(\sigma_{0,ij}^{(1)}) + 0.1 I_p$ with 
	\begin{equation*}
		\sigma_{0,ij}^{(1)} = \begin{cases}
			0.1,               &  10k+1 \le i,j\le 10k+10, k \text{ is even numbers} \\
			4 , &  10k+1 \le i,j\le 10k+10, k \text{ is odd numbers} \\
			0 , &\text{otherwise},
		\end{cases}
	\end{equation*} 
	and $\Sigma_0^{(2)}=(\sigma_{0,ij}^{(2)}) + 0.1 I_p$ with 
	\begin{equation*}
		\sigma_{0,ij}^{(2)} = \begin{cases}
			0.25,               &  10k+1 \le i,j\le 10k+10, k=0,5 \\
			0.5,               &  10k+1 \le i,j\le 10k+10, k=1,6 \\
			1,               &  10k+1 \le i,j\le 10k+10, k=2,7 \\
			2,               &  10k+1 \le i,j\le 10k+10, k=3,8 \\
			4,               &  10k+1 \le i,j\le 10k+10, k=4,9 \\						
			0 , &\text{otherwise}.
		\end{cases}
	\end{equation*} 
	For each true covariance, the true global minimum variance portfolio is derived, and true covariances and the corresponding global minimum variance portfolios are visualized in Figure \ref{fig:Sigmas}. 
	\begin{figure}[!tb]
		\centering
		\includegraphics[height=8cm,width=16cm]{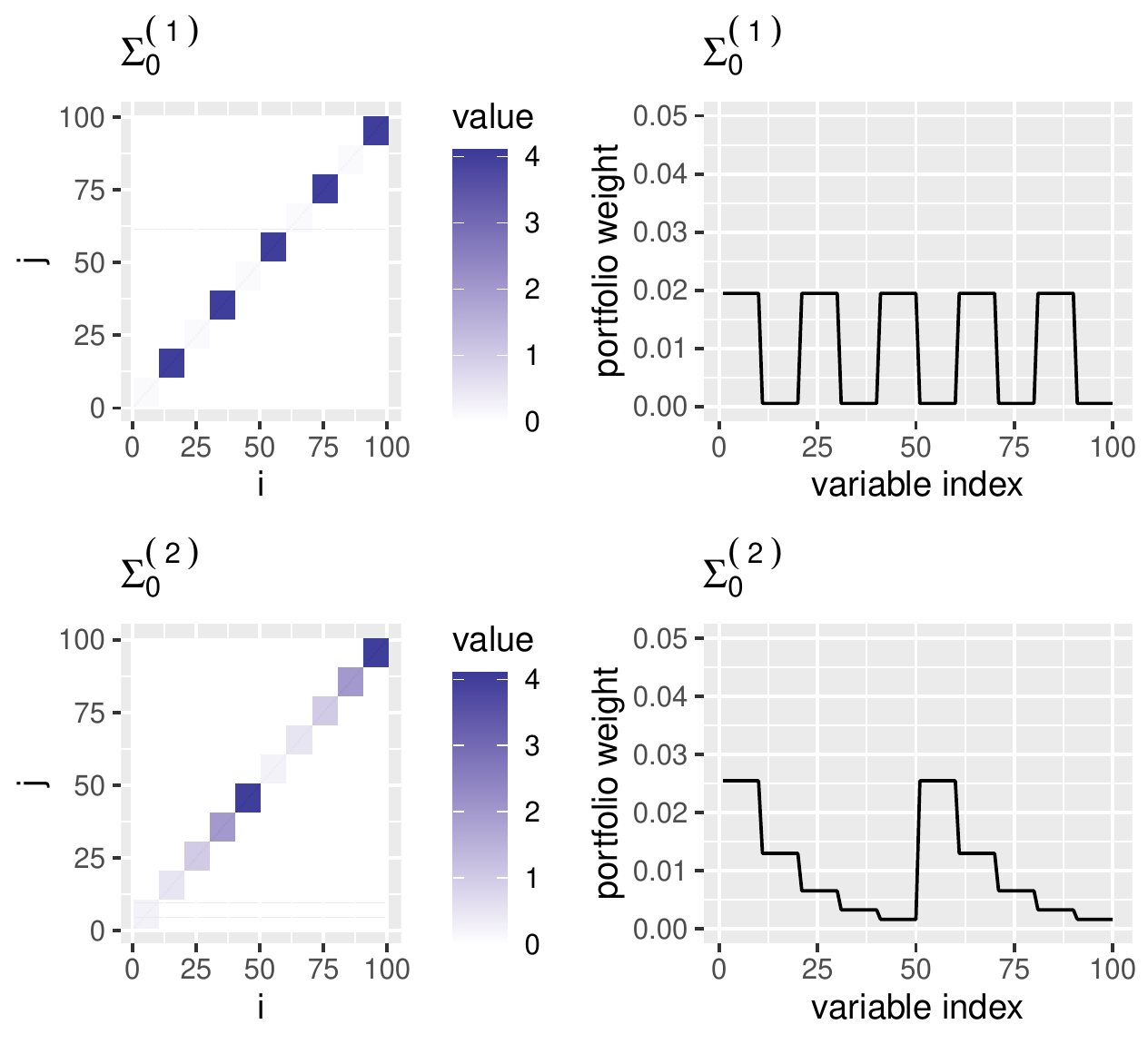}
		\caption{Visualization of covariance matrices and the derived global minimum variance portfolios of $\Sigma_0^{(1)}$ and $\Sigma_0^{(2)}$. The covariance matrices are in the left column, and vectors of global minimum variance portfolio weights are in the right column. The x-axis on the right plot represents the index of variables (assets), and the y-axis is the weight.}
		\label{fig:Sigmas}
	\end{figure}
	We generate the data $X_1,\ldots, X_n \iid N_p(0, \Sigma_0^{(t)})$ for $t=1$ and $2$, and $n=50,500$ and $2000$.
	For the Bayesian methods, we need to specify the hyperparameters and the number of posterior samples. 
	When we use the inverse-Wishart prior for the inverse-Wishart method and the initial prior of PPP, we set the shape parameter $\nu_0 = 2p+2$, the scale matrix as $\bar{s}_{ii}I_p$ where $\bar{s_{ii}} = p^{-1}\sum_{i=1}^p s_{ii}$ and $s_{ii}$ is the $i$th diagonal element of the sample covariance. We generate $2000$ posterior sample for the inverse-Wishart posteriors. For the beta-mixture shrinkage prior, we set the hyperparameter as suggested in \cite{lee2021beta} and generate $3000$ posterior sample including $1000$ burn-in sample.

	%tuning parameter 선택
	For the thresholded sample covariance and the post-processed posterior, tuning parameters of threshold parameter and positive-definite adjustment parameter need to be specified. We use the cross-validation idea for this purpose. 
We split the data into train and validation data and measure the prediction error using the validation data and the estimator based on the train data. 
Let $X^{train}$ and $X^{val}$ denote train data and validation data, respectively, and let 
$\hat{\Sigma}(X^{train})$ be the covariance estimator based on the training data.
We measure the prediction error as $f(\hat{\Sigma}(X^{train}),X^{val})$ for a loss function $f$, which is chosen in accordance with the goal of data analysis. We present two examples of this loss function in this section. 
If $\hat{\Sigma}$ is given as a set of posterior sample, $\Sigma^{(1)}, \ldots, \Sigma^{(N)}$, then we measure the prediction error as the posterior mean of prediction errors $N^{-1}\sum_{i=1}^N f(\Sigma^{(i)}, X^{val})$.

	%오차에 대한 함수로서 우리는 두가지 함수를 이용한다. 하나는 negative log pdf이고, 다른 하나는 variance of return. log pdf는 공분산 자체 추정을 목표로 할때 이용하고, f(\Sigma,X^{val}) = \log pdf(X^{val} \mid \Sigma)$이다. variance of return은 GMVP를 추정하는 것을 목표로 할때 사용한다.  $f(w,X^{val}) = \hat{\sigma}(w,X^{val}) = \hat{Var}\{ w^T X^{(val)},\ldots\}$,
	%where $\hat{Var} A$ for a set $A\in \bbR$ is the sample variance of $A$. 
	%This loss function for the GMVP is constructed based on the fact that the true GMVP is $\argmin_w Var(w^T X)$.
	
	%Bayesian 방법을 위한 hyperparameter는 다음과 같이 선택한다.
	
	We conduct the comparison study in the respect of estimating covariance itself. 
	For the tuning parameter selection, we set the prediction error function $f(\hat{\Sigma}(X^{train}),X^{val})$ as 
	\bea
	||\hat{\Sigma}(X^{train}) - (X^{val})^T X^{val}/ N_{val}||_2,
	\eea
	where $N_{val}$ is the number of observations in $X^{val}$. This error function was used for the covariance estimation in \cite{bickel2008regularized}. 
	For $50$ sets of simulated data, we calculate the error of each covariance estimators as
	\bean
	\frac{1}{50} \sum_{s=1}^{50}  ||\Sigma_0-\hat{\Sigma}^{(s)}||_2/||\Sigma_0||_2 ,
	\eean
	where $\hat{\Sigma}^{(s)}$ is an point estimator based on the $s$-th simulated data set. For Bayesian methods, we use posterior mean as the point estimator. Table \ref{tbl:point} gives the simulation errors.
	\begin{table}[h]
		\centering
		\begin{tabular}{|c|c|c|c|c|c|c|}
			\hline
			& \multicolumn{3}{c|}{$\Sigma_0^{(1)}$} & \multicolumn{3}{c|}{$\Sigma_0^{(2)}$} \\ \hline
			& n = 50     & n = 500    & n = 2000    & n = 50     & n = 500    & n = 2000    \\ \hline
			PPP       & 0.40   &    0.11 &  0.05  &      0.31      &    0.11        &       0.07      \\ \hline
			CGM     & 0.85   &    0.77    &  0.73  &     0.82       &   0.75         &        0.69     \\ \hline
			IW        & 0.55  & 0.17   &    0.06    &     0.40       &       0.13      &      0.07\\ \hline
			Thres     & 0.39  & 0.12   &    0.07     &     0.30       &    0.10        &    0.07 \\ \hline
			Sample cov & 0.55   & 0.17    &  0.09   &  0.40      &       0.13     &       0.07      \\ \hline
		\end{tabular}
		\caption{Errors of point estimators for the covariance under the spectral norm. PPP represents the our method. CGM and IW represent the Bayesian methods with the Beta-mixture shrinkage prior and the inverse-Wishart prior, respectively. Thres and Sample cov represent the frequentist methods with the thresholded sample covariance and the sample covariance, respectively.}
		\label{tbl:point}
	\end{table}
	The thresholding post-processed posterior and the thresholded sample covariance have the smallest errors in all settings.
	%전반적인 셋팅에서 thresholded sample covariance와 post-processed posterior의 오차가 작은것을 확인할 수 있다. 그리고 n이 2000이 될때는 sparse 가정을 하지 않은 방법들과는 성능이 비슷해짐을 확인할 수 있다. 하지만, CGM 방법은 전반적으로 성능이 다른 방법에 좋지 않다. beta shinrknage prior로 얻은 posterior는 Frobenius norm 관점에서만 정당화되어 있다. 따라서 spectral norm 관점에 대해선 정당화가 되어 있지 않다. 위 시뮬레이션 결과 역시 spectral norm 관점에서 정당화되지 않음을 보이며, 이는 hyperparameter 선택 방법에 대한 연구가 필요로 할것으로보인다. 

	Next, we conduct the comparison study in terms of estimation of the global minimum variance portfolio. 
	For the tuning parameter selection, we set the prediction error function $f(\hat{\Sigma}(X^{train}),X^{val})$ as 
	\bea
	\hat{\sigma}(w_{GMVP}(\hat{\Sigma}),X^{val}) := \hat{Var}\{ w_{GMVP}(\hat{\Sigma})^T [X^{(val)}]_1,\ldots, w_{GMVP}(\hat{\Sigma})^T [X^{(val)}]_{Nval}\},
	\eea
	where $\hat{Var} A$ is the sample variance of set $A\subset\bbR$, and $[X^{(val)}]_i$ represents $i$th observation of $X^{(val)}$. 
	This loss function is constructed based on the fact that the true GMVP is $argmin_w Var(w^T X)$.
	We obtain the GMVP estimators using the simulated data and visualize the result of estimation of the Bayesian methods in Figures \ref{fig:GMV_Sigma1} and \ref{fig:GMV_Sigma2}. In these figures, posterior means and the $95\%$ credible intervals are represented. %두 그래프는 공통적으로 PPP 방법이 모든 경우에 낫다는 것을 보여주며, CGM방법은 IW 방법에 보다는 추정오차가 더 작다는 것을 확인할 수 있다. 
	\begin{figure}
		\centering
		\includegraphics[scale=1]{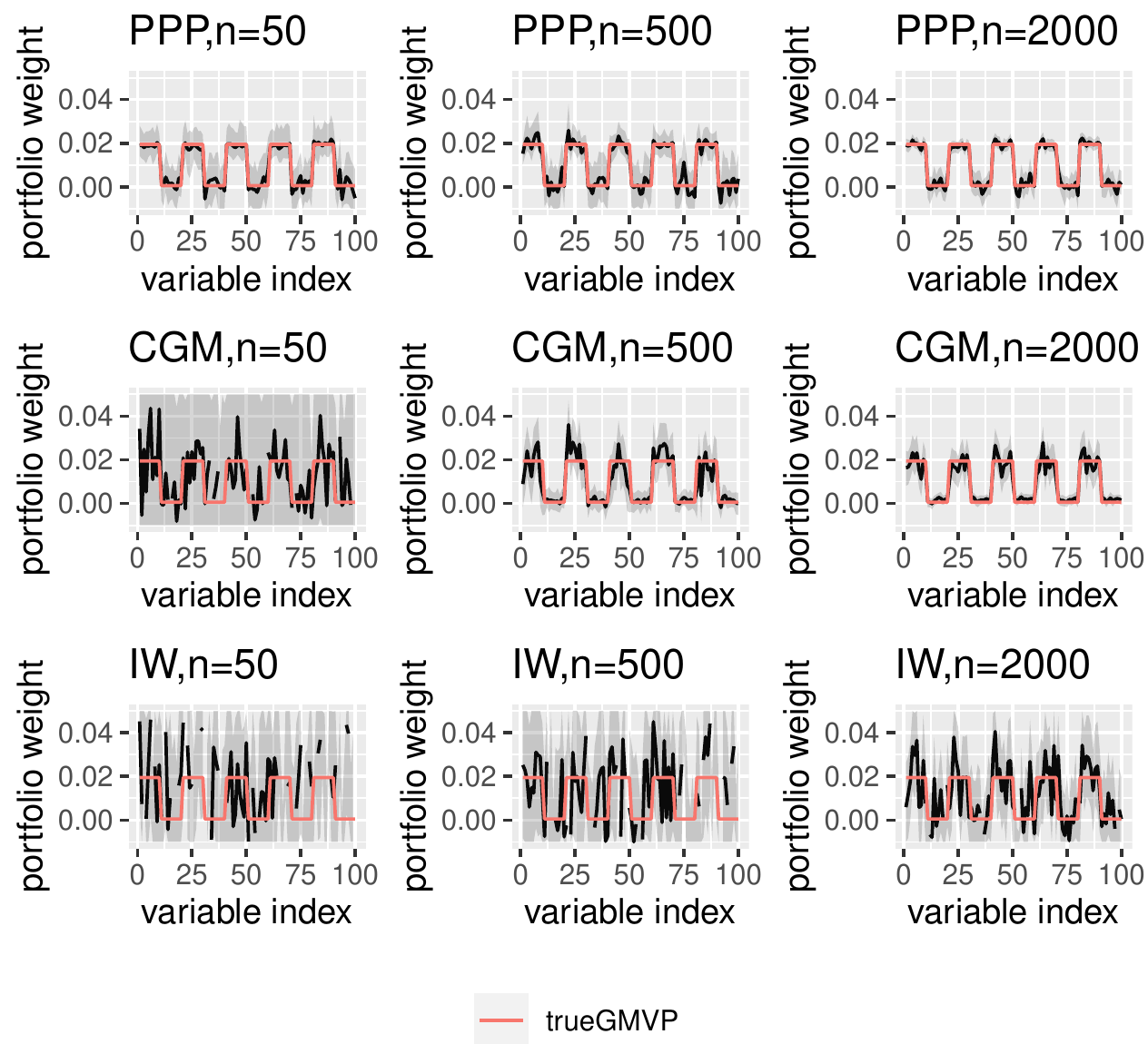}
		\caption{When the true covariance is $\Sigma_0^{(1)}$, the posterior mean and the $95\%$ credible intervals of elements in the global minimum variance portfolio are represented for the Bayesian methods: the post-processed posterior, conventional Bayesian methods with the Beta-mixture shrinkage prior and the inverse-Wishart prior.}
		\label{fig:GMV_Sigma1}
	\end{figure}
	\begin{figure}
		\centering
		\includegraphics[scale=	1]{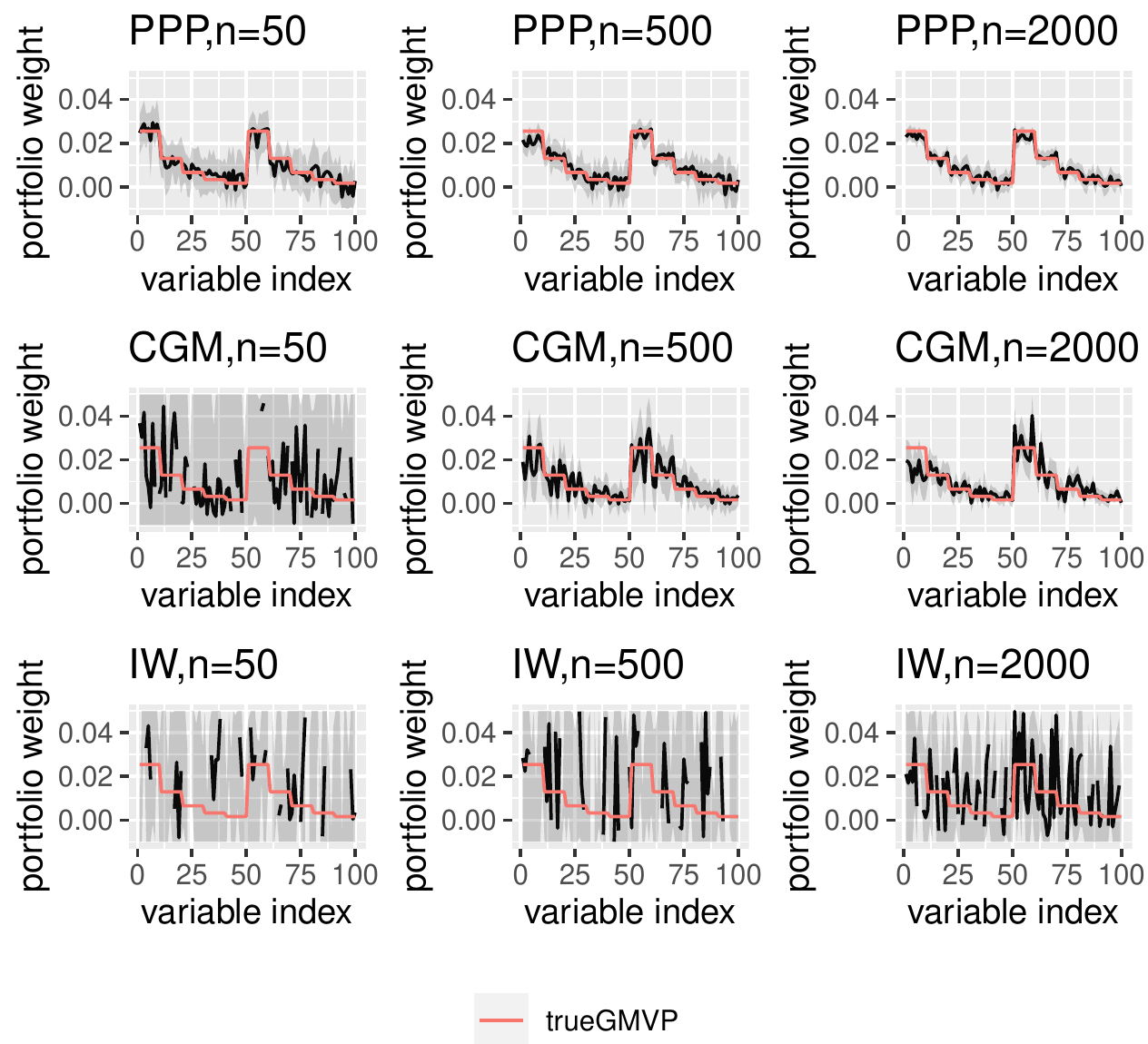}
		\caption{When the true covariance is $\Sigma_0^{(2)}$, the posterior mean and the $95\%$ credible intervals of elements in the global minimum variance portfolio are represented for the Bayesian methods: the post-processed posterior, conventional Bayesian methods with the Beta-mixture shrinkage prior, and the inverse-Wishart prior.}
		\label{fig:GMV_Sigma2}
	\end{figure}

	We repeat generating the simulation data $50$ times and give the summarized performance in both point and interval estimation aspects.
	We measure the point estimation error of the GMVP as
	\bea 
	\frac{1}{50} \sum_{s=1}^{50}  ||w_{GMVP}(\Sigma_0)-\hat{w}_{GMVP}^{(s)}||_2/||w_{GMVP}(\Sigma_0)||_2,
	\eea
	where $\hat{w}_{GMV}^{(s)}$ is the point estimator for the GMVP in the $s$-th simulated data set.
	The performances are summarised in Table \ref{tbl:gmv_point}. Table \ref{tbl:gmv_point} shows that the thresholding post-processed method has the smallest errors in all settings.
	\begin{table}[h]
		\centering
		\begin{tabular}{|c|c|c|c|c|c|c|}
			\hline
			& \multicolumn{3}{c|}{$\Sigma_0^{(1)}$} & \multicolumn{3}{c|}{$\Sigma_0^{(2)}$} \\ \hline
			& n = 50     & n = 500    & n = 2000    & n = 50     & n = 500    & n = 2000    \\ \hline
			PPP       & 0.22   &    0.15 &  0.09  &     0.29      &    0.24        &       0.14    \\ \hline
			CGM       & 1.06   &    0.28    &  0.22  &     2.14       &   0.38         &        0.40   \\ \hline
			IW        & 1.95  & 1.47   &    0.72    &     3.88     &       2.69     &      1.30\\ \hline
			Thres     & 0.30  & 0.18   &    0.12    &     0.38      &    0.27        &    0.16 \\ \hline
			Bona fide & 2.01   & 1.51    &  0.72   &  2.91     &       2.37    &       1.26     \\ \hline
		\end{tabular}
		\caption{Errors of point estimators for the global minimum variance portfolio.
		PPP represents the our method. CGM and IW represent the Bayesian methods with the Beta-mixture shrinkage prior and the inverse-Wishart prior, respectively. Thres and Bona fide represent the frequentist methods with the thresholded sample covariance and the Bona fide estimator, respectively.}
		\label{tbl:gmv_point}
	\end{table}
	%spectral norm error관점과 달리 GMVP 관점에서는 sparse 가정을 이용한 CGM 방법이 다른 두 방법보다 오차가 적음을 확인할 수 있다. 
	
	We also show the performances in the respect of interval estimation for the GMVP. Given the posterior sample $w_{GMV}^{(1)},\ldots,w_{GMV}^{(N)}$, 
let $[l_i,u_i]$ denote $95\%$ credible interval of $i$th asset weight.
We measure the coverage probability as 
\bea
100\frac{1}{p}\sum_{i=1}^p I(w_{GMV}(\Sigma_0)_i \in [l_i,u_i]).
\eea
Table \ref{tbl:gmv_interval} gives the average of coverage probabilities.
The coverage probabilities by the thresholding post-processed posterior are nearer to the desired probability $95\%$ than those by the other methods.
	\begin{table}[]
		\centering
		\begin{tabular}{|c|c|c|c|c|c|c|}
			\hline
			& \multicolumn{3}{c|}{$\Sigma_0^{(1)}$} & \multicolumn{3}{c|}{$\Sigma_0^{(2)}$} \\ \hline
			& n = 50     & n = 500    & n = 2000    & n = 50     & n = 500    & n = 2000    \\ \hline
			PPP       & 95.8\%   & 96.8\%  &    96.9\%   &   97.0\%     &  95.3\%          &   93.9\%          \\ \hline
			CGM       &     100\%    &   97.0\%  &    94.3\%   &           99.9\%    &  95.4\%          &  80.9\%           \\ \hline
			IW        &       87.8\%     &  89.5\%   &  92.0\%   &        83.1\%    &  89.2\%          &   93.0\%          \\ \hline
		\end{tabular}
		\caption{Average coverage probabilities of the Bayesian methods. PPP represents the our method. CGM and IW represent the Bayesian methods with the Beta-mixture shrinkage prior and the inverse-Wishart prior, respectively. }
		\label{tbl:gmv_interval}
	\end{table}
	%다른 방법들과 비교할때, PPP 방법이 nomial coverage probability에 가깝다는 것을 확인할 수 있다.

	%시뮬레이션 결과를 통해 point estimator 관점으로 볼때, thresholded sample covariance 와 post-processed posterior가 spectral norm오차와 GMVP오차 관점에서 모두 성능이 좋다는 것을 확인할 수 있다. 그리고 PPP 방법이 GMVP의 interval estimator로서의 성능이 좋다는 것도 확인하였다. 이를 통해, PPP 방법은 점추정량성능도 좋고, sound interval estimator를 가능하게 함을 확인할 수 있다. 
	
	\sse{Real Data Analysis}
	
	We apply the GMVP estimators to S\&P 400 data, consisting of the stock price of $400$ mid-cap companies in the United States.
	We collect the data from \cite{ssga} and the collection period is from 2nd May 2011 to 30th April 2021.
	We process this data to obtain a monthly return and discard data of stocks that have missing values. As a result of preprocessing, we obtain $120\times 327$ data matrix $X$ in which the rows represent observations and the columns represent variables. 
	
	To compare the performances of the GMVP estimators, we extract the train and test data form the S\&P 400 data as follows: 
	\begin{enumerate}
		\item Sample an index $i$ from $\{ 48,49,\ldots,120-12  \}$
		\item Let $[X]_{(i-48+1):i}$ and $[X]_{(i+1):(i+12)}$ denote the train and test data, respectively,
		where $[X]_{j:k}$ is the submatrix of $X$ from $j$th row to $k$th row. 
	\end{enumerate}
	By setting the pair of train and test data, we use $4$ years data for the next $1$ year. 
	
	%Given the train and test data, we apply each method and get the point estimator $\hat{w}$ and we obtain the sample variance as $\hat{Var}(\hat{w},X^{(test)}) $ for the performance measure. 
	We repeat generating the pair of train and test data $20$ times and let $X^{(train),i}$ and $X^{(test),i}$ be the train and test data in the $i$th iteration. 
	Using these pairs of train and test data, we estimate the portfolio variance for the GMVP estimators as 
	\bea
	\frac{1}{20}\sum_{i=1}^{20} 100 \sqrt{\hat{Var} (\hat{w}(X^{(train),i}), X^{(test),i})},
	\eea
	which is represented in Table \ref{tbl:snp400}.
	\begin{table}[h]
		\centering
		\begin{tabular}{|c|c|c|c|c|}
			\hline
			& PPP  & Thres & Bona fide & IW \\ \hline
			Var & 3.50 & 3.49                   & 4.07      & 3.80            \\ \hline
		\end{tabular}
		\caption{ 
			The portfolio vector is estimated with the train data, and the portfolio variance is estimated with the test data and the estimated portfolio.
		The average value of estimated portfolio variance is represented. 
	PPP represents the our method. IW represent the Bayesian method with the inverse-Wishart prior. Thres and Bona fide represent the frequentist methods with the thresholded sample covariance and the Bona fide estimator, respectively.   }
		\label{tbl:snp400}
	\end{table}
	Table \ref{tbl:snp400} shows that the thresholded sample covariance and the post-processed posterior give the smallest variance. 
	
	%전반적으로 두 방법이 좋다는 것을 확인할 수 있다.

	\section{Discussion}

We have proposed a post-processed posterior for the Bayesian inference on sparse covariances and applied this method to estimate the global minimum variance portfolio.
The main advantage of the method over the frequentist methods is on interval estimations for functionals of covariance. The simulation study shows that the interval estimators on the global minimum variance parameter have the frequentist property, attaining nominal coverage probability on the true value. 
We have also shown that the proposed method has the minimax optimal convergence rates for the covariance and the global minimum variance portfolio parameter.

The present work extends the post-processing method to the class of sparse covariances and the minimax analysis on the global minimum variance portfolio parameter. Since the minimax analysis is limited to the sparsity assumption on the covariance, we expect to conduct the minimax analysis to other covariance assumptions as to future works.
For example, for the unconstrained covariance space, we can check the attainability of consistent estimation for the global minimum variance portfolio parameter as \cite{lee2018optimal} did for the covariance itself.  
	
	%해당연구의 의의는 기존의 사후처리 방법을 스파스 공분산에 확장시켰다는 것과 gmVP파라미터에 대한 minimax를 수행했다는 점에 있다. gmvp에 대한 minimax 분석은 다른 공분산 가정들에 대해 적용하는것은 future work이 될수 있을 것으로 기대된다. 예를 들어서, 제약조건이 없는 고차원 모수공산에서는 lee and lee에서 처럼 gmvp이 대한 consistent 한 추정의 가능여부에 대해 연구를 할수 있을것으로 기대된다.

	%We showed the thresholding post-processed posterior has the minimax optimal convergence rate, and applied the proposed method for the estimation of global minimum variance portfolio.
	
	%The thresholded sample covariance and the thresholding post-processed posterior have in common that the sparsitiy structure of covariance, the positions of zero elements in the covariance, is determined from the data. 
	%since the sparsitiy structure is obtained from each initial posterior sample, the sparsitiy structure is not fixed for the post-processed posterior. Thus, it is expected that the post-processed posterior can lead to a decision under the consideration of the uncertainty of the sparsitiy structure.

	\bibliographystyle{dcu}
	\bibliography{cov-ppp}

\end{document}

% --- supplement: supp.tex ---

\maketitle

In the supplementary material, we provide the proofs of lemmas and theorems in the main paper. 

We let $A_{\{i,j\}\times\{i,j\}}$ denote $2\times 2$ sub-matrix of $A$ with $i,j$ row and column for a matrix $A$. 
For a $p\times p$ symmetric matrix $A$, let $diag(A)$ be a $p\times p$ diagonal matrix in which diagonal elements are equal to those of $A$. We also let $diag^c(A) = A-diag(A)$.

\se{Proof of Theorem \ref{main-thm:upperbound} and Related Lemma}
In this section, we prove Theorem \ref{main-thm:upperbound}.
First, we present Lemma \ref{lem:ineq_thres} which gives an inequality for thresholded covariances. %used in the proof of Theorem 3 in \cite{cai2012optimal}.

\begin{lemma}\label{lem:ineq_thres} 
Let $n,p$ and $r$ be positive integers and $\gamma$ be a positive real number. 
Suppose $\Sigma_0 = (\sigma_{0,ij})_{1\le i,j\le p} \in \calG_q(c_{n,p},M_0,M_1)$.
There exist a positive constant $C$ such that 
\bea
||H_\gamma(\Sigma)-\Sigma_0||_1^r &\le& 2^{r-1} ||D ||_1^r +  4^{r-1}\Big\{4C c_{n,p} \Big(\frac{\log p}{n}\Big)^{(1-q)/2}\Big\}^r + 
  \Big\{4\gamma \Big(\frac{\log p}{n}\Big)^{1/2}\Big\}^r,\\
 ||diag^c\{H_\gamma(\Sigma)-\Sigma_0\}||_1^r &\le& 2^{r-1} ||D ||_1^r +  2^{r-1}\Big\{4C c_{n,p} \Big(\frac{\log p}{n}\Big)^{(1-q)/2}\Big\}^r,
\eea
where $D=(d_{ij})_{1\le i,j\le p}$, $d_{ij}=(H_\gamma(\Sigma)_{ij}-\sigma_{0,ij}) I(A_{ij}^c)$, and 
\bea
A_{ij}=\Big\{ |(H_\gamma(\Sigma))_{ij}-\sigma_{0,ij}|\le 4\min \Big\{ |\sigma_{0,ij}|,\gamma\sqrt{\frac{\log p}{n}}\Big\} \Big\}.
\eea
		
\end{lemma}
\begin{proof}
We have 
\bea
||H_\gamma(\Sigma)-\Sigma_0||_1^r &\le& 
2^{r-1} ||D ||_1^r + 2^{r-1} || \{(H_\gamma(\Sigma)_{ij} - \sigma_{0,ij})I(A_{ij})   \}_{1\le i,j\le p} ||_1^r\\
&\le& 2^{r-1} ||D ||_1^r + 4^{r-1} [\sup_j \sum_{i\neq j} |(H_\gamma(\Sigma)_{ij}- \sigma_{0,ij}|I(A_{ij})]^{r} + 4^{r-1} \{\sup_j |(H_\gamma(\Sigma)_{jj} - \sigma_{0,jj}|I(A_{jj})\}^r\\
&\le& 2^{r-1} ||D ||_1^r + 4^{r-1} \Big[\sup_j \sum_{i\neq j} 
4\min \Big\{ |\sigma_{0,ij}| ,\gamma \Big(\frac{\log p}{n}\Big)^{1/2}\Big\}\Big]^r
 + 4^{r-1} \Big\{4\gamma \Big(\frac{\log p}{n}\Big)^{1/2}\Big\}^r,
\eea
where the third inequality is satisfied by the definition of $A_{ij}$.
Inequality (51) in the proof of Theorem 3 in \cite{cai2012optimal} gives  
\bea
\sum_{i\neq j} 
\min \Big\{ |\sigma_{0,ij}| ,\gamma \Big(\frac{\log p}{n}\Big)^{1/2}\Big\} \le 
C c_{n,p} \Big(\frac{\log p}{n}\Big)^{(1-q)/2},
\eea
for a positive constant $C$.
Thus, we get
\bea
||H_\gamma(\Sigma)-\Sigma_0||_1^r &\le& 2^{r-1} ||D ||_1^r +  4^{r-1}\Big\{4C c_{n,p} \Big(\frac{\log p}{n}\Big)^{(1-q)/2}\Big\}^r + 4^{r-1} \Big\{4\gamma \Big(\frac{\log p}{n}\Big)^{1/2}\Big\}^r.
\eea

Likewise, we have
\bea
||diag^c\{H_\gamma(\Sigma)-\Sigma_0\}||_1^r &\le& 
2^{r-1} ||diag^c(D) ||_1^r + 2^{r-1} ||diag^c[ \{(H_\gamma(\Sigma)_{ij} - \sigma_{0,ij})I(A_{ij})   \}_{1\le i,j\le p}] ||_1^r\\
&\le& 2^{r-1} ||D ||_1^r +  2^{r-1} [\sup_j \sum_{i\neq j} |(H_\gamma(\Sigma)_{ij}- \sigma_{0,ij}|I(A_{ij})]^{r} \\
&\le& 2^{r-1} ||D ||_1^r +2^{r-1}\Big\{4C c_{n,p} \Big(\frac{\log p}{n}\Big)^{(1-q)/2}\Big\}^r .
\eea

\end{proof}

\begin{lemma}\label{lem:concent_element} 
Suppose $\Sigma \in \calG_q(c_{n,p},M_0,M_1)$.
If $x<4 ||\Sigma_0||$ and $(\nu_n -2p) \vee ||A|| = o(n)$, then there exist positive constants $C$ and $\lambda$ such that
\bea
E_{\Sigma_0}\{P^{\pi^i}( |\sigma_{ij} -\sigma_{0,ij}| \ge x  \mid\bbX_n)\} &\le&
C( \exp(-\lambda n)  + \exp(-\lambda n x^2)),
\eea
for arbitrary $i,j\in [p]$. 

\end{lemma}
\begin{proof}

	Since $[\Sigma_{\{i,j\}\times \{i,j\}}|\bbX_n]\sim IW_2(n+\nu_n-2p+4, A_{\{i,j\}\times \{i,j\}}+nS_{\{i,j\}\times \{i,j\}})$ \citep[Theorem 5.2.3]{press2012applied},
			Lemma 6.2 in the supplementary material of \cite{lee2021condmean} gives
	\bea
	&&E_{\Sigma_0}\{P^{\pi^i}( |\sigma_{ij} -\sigma_{0,ij}| \ge x     \mid\bbX_n)\}\\
	&\le& E_{\Sigma_0}\{P^{\pi^i}( || \Sigma_{\{i,j\}\times \{i,j\} } -(\Sigma_0)_{\{i,j\}\times \{i,j\} } ||_2  \ge x   \mid\bbX_n)\}\\
	&\le& C \{\exp(-\lambda n) + \exp(-\lambda nx^2) \},
	\eea
	for some positive constants $C$ and $\lambda$.
\end{proof}

\begin{lemma}\label{lemma:concentD}
Suppose the same notations of Lemma \ref{lem:ineq_thres}. If $(\nu_n -2p) \vee ||A|| \vee \log p= o(n)$ and $\gamma$ is a sufficiently large constant, then there exists a positive constant $C$ such that
\bea
E\{E^{\pi^i}(	||D||_1^2\mid\bbX_n)\} = \frac{C}{n},
\eea
for all sufficiently large $n$. 
\end{lemma}
\begin{proof}
		We have 
\bea
||D||_1^2 &\le& p\sum_{ij} d_{ij}^2 \nonumber\\
&=& p\sum_{ij} (H_\gamma(\Sigma)_{ij}-\sigma_{0,ij})^2 I(A_{ij}^c) 
\{I(|\sigma_{ij}| \ge \gamma (\log p/n)^{1/2} ) + I(|\sigma_{ij}| < \gamma (\log p/n)^{1/2} )  \}\nonumber\\
&\le & p\sum_{ij} (\sigma_{ij}-\sigma_{0,ij})^2 I(A_{ij}^c) I(|\sigma_{ij}| \ge \gamma (\log p/n)^{1/2} ) \\
&&+p\sum_{ij} 
\sigma_{0,ij}^2 I(A_{ij}^c)I(|\sigma_{ij}| < \gamma (\log p/n)^{1/2} ),
\eea
where $\sigma_{ij}$ is the $(i,j)$ element of $\Sigma$.
We also have
\bean
I(A_{ij}^c) I(|\sigma_{ij}| \ge \gamma (\log p/n)^{1/2} ) &=&
I(A_{ij}^c) I(|\sigma_{ij}| \ge \gamma (\log p/n)^{1/2} )  I(|\sigma_{0,ij}| <\frac{\gamma}{2}\Big(\frac{\log p}{n}\Big)^{1/2} ) \nonumber\\
&&+ I(A_{ij}^c) I(|\sigma_{ij}| \ge \gamma (\log p/n)^{1/2} ) I(|\sigma_{0,ij}| \ge\frac{\gamma}{2}\Big(\frac{\log p}{n}\Big)^{1/2} )\nonumber\\
&\le&I(|\sigma_{ij}| \ge \gamma (\log p/n)^{1/2} )  I(|\sigma_{0,ij}| <\frac{\gamma}{2}\Big(\frac{\log p}{n}\Big)^{1/2} ) \nonumber\\
&&+ I(|\sigma_{ij} - \sigma_{0,ij}|> 4\min \Big\{ |\sigma_{0,ij}|,\gamma\sqrt{\frac{\log p}{n}}\Big\})  I(|\sigma_{0,ij}| \ge\frac{\gamma}{2}\Big(\frac{\log p}{n}\Big)^{1/2} )\nonumber\\
&\le&I(|\sigma_{ij}| \ge \gamma (\log p/n)^{1/2} )  I(|\sigma_{0,ij}| <\frac{\gamma}{2}\Big(\frac{\log p}{n}\Big)^{1/2} ) \nonumber\\
&&+ I(|\sigma_{ij}-\sigma_{0,ij}| > 2 \gamma (\log p/n)^{1/2})\nonumber\\
&\le&I(|\sigma_{ij}| -|\sigma_{0,ij}| \ge \frac{\gamma}{2}\Big(\frac{\log p}{n}\Big)^{1/2})+ I(|\sigma_{ij}-\sigma_{0,ij}| > 2 \gamma (\log p/n)^{1/2})\nonumber\\
&\le& I(|\sigma_{ij} -\sigma_{0,ij}| \ge \frac{\gamma}{2}\Big(\frac{\log p}{n}\Big)^{1/2}),\label{formula:Aupper1}
\eean
where the first inequality is satisfied by the definitions of $A_{ij}$ and $H_\gamma(\Sigma)$. 
Next, we have
\bean
I(A_{ij}^c)I(|\sigma_{ij}| < \gamma (\log p/n)^{1/2} ) &=&
I(|\sigma_{ij}| < \gamma (\log p/n)^{1/2} )I( |\sigma_{0,ij}| > 4\min \Big\{ |\sigma_{0,ij}|,\gamma\sqrt{\frac{\log p}{n}}\Big\}   )\nonumber\\
&\le & I(|\sigma_{0,ij}|-|\sigma_{ij}- \sigma_{0,ij}| \le \gamma (\log p/n)^{1/2} ) I(|\sigma_{0,ij}| > 4\gamma\sqrt{\frac{\log p}{n}})\nonumber\\
&\le& I(|\sigma_{ij}- \sigma_{0,ij}| \ge 3|\sigma_{0,ij}|/4 ) I(|\sigma_{0,ij}| > 4\gamma\sqrt{\frac{\log p}{n}}).\label{formula:Aupper2}
\eean
Using the inequalties \eqref{formula:Aupper1} and \eqref{formula:Aupper2}, we get 
\bean
E_{\Sigma_0}\{ E^{\pi^i}(||D||_1^2\mid\bbX_n)\} &\le& 
p\sum_{ij} E\{ E^{\pi^i}( (\sigma_{ij}-\sigma_{0,ij})^2 I(|\sigma_{ij} -\sigma_{0,ij}| \ge \frac{\gamma}{2}\Big(\frac{\log p}{n}\Big)^{1/2})   \mid\bbX_n)\}		\label{formula:sparseupperbound1}\\
&&+p\sum_{ij} E\{ E^{\pi^i}(  \sigma_{0,ij}^2 I(|\sigma_{ij}- \sigma_{0,ij}| \ge \frac{3}{4}|\sigma_{0,ij}| ) I(|\sigma_{0,ij}| > 4\gamma\sqrt{\frac{\log p}{n}})  \mid\bbX_n)\}.\label{formula:sparseupperbound2}
\eean
For the upper bound of \eqref{formula:sparseupperbound1}, we have
\bea
\eqref{formula:sparseupperbound1}\le p^3  \sup_{ij} E_{\Sigma_0}\{ E^{\pi^i}(( \sigma_{ij}-\sigma_{0,ij})^4\mid\bbX_n)\}^{1/2}  E_{\Sigma_0}\{P^{\pi^i}( |\sigma_{ij} -\sigma_{0,ij}| \ge \frac{\gamma}{2}\Big(\frac{\log p}{n}\Big)^{1/2}     \mid\bbX_n)\}^{1/2},
\eea
and 
\bea
E_{\Sigma_0}\{ E^{\pi^i}(( \sigma_{ij}-\sigma_{0,ij})^4\mid\bbX_n)\} \le 
E_{\Sigma_0}\{ E^{\pi^i}(|| \Sigma_{\{i,j\}\times \{i,j\} } -(\Sigma_0)_{\{i,j\}\times \{i,j\} } ||_2^4\mid\bbX_n)\}.
\eea
Since $[\Sigma_{\{i,j\}\times \{i,j\}}|\bbX_n]\sim IW_2(n+\nu_n-2p+4, A_{\{i,j\}\times \{i,j\}}+nS_{\{i,j\}\times \{i,j\}})$ \citep[Theorem 5.2.3]{press2012applied}, 
Lemma 6.1 in the supplementary material of \cite{lee2021condmean} gives that there exist a positive constant $C_1$ such that
\bean
E_{\Sigma_0}\{ E^{\pi^i}(|| \Sigma_{\{i,j\}\times \{i,j\} } -(\Sigma_0)_{\{i,j\}\times \{i,j\} } ||_2^4\mid\bbX_n)\}^{1/2} \le 
\frac{C_1}{n},\label{formula:sparseupper3}
\eean
for all sufficiently large $n$. 
Lemma \ref{lem:concent_element} gives that there exist some positive constants $C_2$ and $\lambda_1$ such that
\bea
E_{\Sigma_0}\{P^{\pi^i}( |\sigma_{ij} -\sigma_{0,ij}| \ge \frac{\gamma}{2}\Big(\frac{\log p}{n}\Big)^{1/2}     \mid\bbX_n)\}^{1/2}
&\le& C_2 \{\exp(-\lambda_1 n) + \exp(-\lambda_1\gamma^2 \log p /2) \},
\eea
for all sufficiently large $n$. 
Thus, we get 
\bea
\eqref{formula:sparseupperbound1} \le C_3 \frac{1}{n}(\frac{1}{  p^{\lambda_1 \gamma^2/2-3}} + p^3\exp(-\lambda_1 n/2)   ),
\eea
for a positive constant $C_3$.
If $\gamma$ is a sufficiently large constant and $\log p = o(n)$, then $\eqref{formula:sparseupperbound1} = O(1/n)$.

For the upper bound of \eqref{formula:sparseupperbound2}, there exist positive constants $C_4$ and $\lambda_2$ such that
\bean
\eqref{formula:sparseupperbound2} 
&\le& \sum_{ij} \sigma_{0,ij}^2 I( |\sigma_{0,ij}| \ge 4\gamma (\log p/n)^{1/2})  E_{\Sigma_0} \{P^{\pi^i} ( |\sigma_{ij} - \sigma_{0,ij}| > \frac{3}{4}|\sigma_{0,ij}| \mid\bbX_n) \} \nonumber\\
&\le& \sum_{ij} \sigma_{0,ij}^2 I( |\sigma_{0,ij}| \ge 4\gamma (\log p/n)^{1/2})  E_{\Sigma_0} \{P^{\pi^i} ( || \Sigma_{\{i,j\}\times \{i,j\} } -(\Sigma_0)_{\{i,j\}\times \{i,j\} } ||_2 > \frac{3}{4}|\sigma_{0,ij}| \mid\bbX_n) \} \nonumber\\
&\le& C_4\frac{p^2}{n} \sup_{ij} \{n\sigma_{0,ij}^2 \exp(- n\lambda_2 \sigma_{0,ij}^2 ) I( n\sigma_{0,ij}^2 \ge 16\gamma^2 \log p)\}\nonumber\\
&\le& C_4\frac{p^2}{n} 16\gamma^2 \log p\exp(-\lambda_2 16\gamma^2 \log p)\nonumber\\
&\le& \frac{C_4}{n}\exp(-\lambda_2 16\gamma^2 \log p + 2\log p + \log\log p),\label{formula:D2}
\eean
for all sufficiently large $n$.
The third inequality holds by Lemma 6.2 in the supplementary material of \cite{lee2021condmean}. 
If $\gamma$ is a sufficiently large constant and $\log p=o(n)$, then $\eqref{formula:sparseupperbound2} = O(1/n)$.

\end{proof}

	\begin{proof}[Proof of Theorem \ref{main-thm:upperbound}] 
	    By Lemma 2.1 in the supplementary material of \cite{lee2021condmean}, we have
	    \bea
	    ||H^{(\epsilon_n)}_\gamma(\Sigma)-\Sigma_0||_2^2 &\le&
	    2^3 ||H_\gamma(\Sigma)-\Sigma_0||_2^2 + 4\epsilon_n^2.
	    \eea
	    Lemma \ref{lem:ineq_thres} gives that there exist a positive constant $C_1$ such that
	    \bea
		||H_\gamma(\Sigma)-\Sigma_0||_2^2 
		&\le& ||H_\gamma(\Sigma)-\Sigma_0||_1^2 \\
		&\le&  C_1(c_{n,p}^2\Big(\frac{\log p}{n}\Big)^{1-q}+\frac{\log p}{n}  )+ 2 ||D||_1^2,
		\eea
		where $D=(d_{ij})_{1\le i,j\le p}$, $d_{ij}=(H_\gamma(\Sigma)_{ij}-\sigma_{0,ij}) I(A_{ij}^c)$, and 
\bea
A_{ij}=\Big\{ |(H_\gamma(\Sigma))_{ij}-\sigma_{0,ij}|\le 4\min \Big\{ |\sigma_{0,ij}|,\gamma\sqrt{\frac{\log p}{n}}\Big\} \Big\}.
\eea
By applying Lemma \ref{lemma:concentD} to the term $||D_1||^2$, we complete the proof.

	\end{proof}
\se{Proof of Theorem \ref{main-thm:portfolio}}
In this section, we prove Theorem \ref{main-thm:portfolio} to show the upper bound of the post-processed posterior in the respect of the global minimum variance portfolio. Before proving the theorem, we present two lemmas and two corollaries from one of the lemma.

\begin{lemma}\label{lemma:1normconcent}
	Let $M$ be an arbitrary positive constant. If $(\nu_n -2p) \vee ||A||= o(n)$  and $x$ satisfies 
	\bea
	\Big[x- \Big\{C_1 c_{n,p} \Big(\frac{\log p}{n}\Big)^{(1-q)/2} + 
	4\gamma \Big(\frac{\log p}{n}\Big)^{1/2} \Big\}\Big] > M,
	\eea
	for a positive constant $C_1$, 
	then there exist positive constants $C$, $\lambda$ such that
\bea
P(||H_\gamma(\Sigma) - \Sigma_0||_1 > x) \le C (2p^2\exp(-\lambda n)  +p^2\exp(-\lambda n^\delta )+ n^{(2-\lambda \gamma)(1-\delta)/2}),
\eea
for all $\gamma >2/\lambda$, all sufficiently large $n$ and all $\delta\in(0,1)$.
\end{lemma}
\begin{proof}
	
Using Lemma \ref{lem:ineq_thres}, we have
\bea
P^{\pi^i}(||H_\gamma(\Sigma) - \Sigma_0||_1 > x\mid\bbX_n) &\le& 
P^{\pi^i}(||D||_1 > x -  s_n \mid\bbX_n) \\
&\le&p\sup_j P^{\pi^i}( \sum_{i=1}^p  |d_{ij}| > x - s_n  \mid\bbX_n)\\
    &\le& p^2 \sup_{i,j}P^{\pi^i}(|d_{ij}| >(x-s_n)/p\mid\bbX_n),
\eea
where $s_n =  C_1 c_{n,p} ((\log p)/n)^{(1-q)/2} + 
4\gamma ((\log p)/n)^{1/2}$ for a positive constant $C_1$.
We show the upper bound of $\sup_{i,j}P^{\pi^i}(|d_{ij}| >(x-s_n)/p\mid\bbX_n)$ for the two cases: 
$p \le n^{(1-\delta)/2}$ and $p > n^{(1-\delta)/2}$, where $\delta$ is an arbitrary real number in $(0,1)$.
First, we consider the case $p \le n^{(1-\delta)/2}$.
 We have 
\bea
&&P^{\pi^i}(|d_{ij}| > (x-s_n)/p\mid\bbX_n) \\
&\le& P^{\pi^i}(|H_\gamma(\Sigma)_{ij}-\sigma_{0,ij} | >(x-s_n)/p\mid\bbX_n)\\
&\le&
P^{\pi^i}(|\sigma_{ij}-\sigma_{0,ij} | >(x-s_n)/p\mid\bbX_n) +
E^{\pi^i}(I(|\sigma_{0,ij} | >(x-s_n)/p) I(|\sigma_{ij}|<\gamma ((\log p)/n)^{1/2})\mid\bbX_n)\\
&\le&
P^{\pi^i}(|\sigma_{ij}-\sigma_{0,ij} | >(x-s_n)/p\mid\bbX_n) + 
P^{\pi^i}(|\sigma_{0,ij} |- |\sigma_{ij}| > (x-s_n)/p - \gamma ((\log p)/n)^{1/2}\mid\bbX_n)\\
&\le& 2 P^{\pi^i}(|\sigma_{ij}-\sigma_{0,ij} | >(x-s_n)/p- \gamma (\log p/n)^{1/2}\mid\bbX_n)\\
&\le& 2 P^{\pi^i}(|\sigma_{ij}-\sigma_{0,ij} | >(x-s_n)n^{(\delta-1)/2} - \gamma ( (1-\delta)(\log n)/(2 n))^{1/2}\mid\bbX_n),
\eea
where the assumption $p\le n^{(1-\delta)}$ is used only in the last inequality.
For all sufficiently large $n$, we have
\bea
(x-s_n)n^{(\delta-1)/2} - \gamma ( (1-\delta)(\log n)/(2 n))^{1/2} > Mn^{(\delta-1)/2}/2 .
\eea Thus, there exist some positive constants $C_2$ and $\lambda_1$ such that 
\bean
P^{\pi^i}(|d_{ij}| > (x-s_n)/p\mid\bbX_n) &\le& C_2[\exp(-n\lambda_1)  +  \exp(-\lambda_1 n^\delta )],\label{formula:concent_res1}
\eean
for all sufficiently large $n$ by Lemma \ref{lem:concent_element}.

 Next we consider the second case $p > n^{(1-\delta)/2}$. 
Since $x-s_n>0$, we have 
 \bea
 P^{\pi^i}(|d_{ij}| > (x-s_n)/p \mid\bbX_n) &\le& P^{\pi^i}(A_{ij}^c\mid\bbX_n).
 \eea
Note that  
\bea
I(A_{ij}^c)  &=& I(A_{ij}^c) \{ I(|\sigma_{ij}| \ge \gamma (\log p/n)^{1/2} )  + I(|\sigma_{ij}| < \gamma (\log p/n)^{1/2} )    \} \\
&\le&  I(|\sigma_{ij} -\sigma_{0,ij}| \ge \frac{\gamma}{2}\Big(\frac{\log p}{n}\Big)^{1/2}) +   I(|\sigma_{ij} -\sigma_{0,ij}| \ge 3\gamma \Big(\frac{\log p}{n}\Big)^{1/2}),
\eea
by \eqref{formula:Aupper1} and \eqref{formula:Aupper2}.
Thus, by Lemma \ref{lem:concent_element}, there exist some positive constants $C_3$ and $\lambda_2$ such that 
\bean
P^{\pi^i}(A_{ij}^c\mid\bbX_n) \le  C_3(\exp(-\lambda_2 n) + \exp(-\lambda_2 \gamma \log p))\label{formula:probAij}.
\eean	
If $\gamma$ is a sufficiently large positive constant such that $\gamma > 2/\lambda_2$, then
\bean
p^2 \sup_{i,j}P^{\pi^i}(A_{ij}^c\mid\bbX_n)&\le&   C_3 (p^2\exp(-\lambda_2 n)  + p^{2-\lambda_2 \gamma} )\nonumber\\
&\le&   C_3 (p^2\exp(-\lambda_2 n)  + n^{(2-\lambda_2\gamma)(1-\delta)/2} ).\label{formula:concent_res2}
\eean
The assumption $p > n^{(1-\delta)/2}$ is used only in the last inequality.
By combining \eqref{formula:concent_res1} and \eqref{formula:concent_res2}, the proof is completed.

\end{proof}

Using Lemma \ref{lemma:1normconcent}, we derive two Corollaries \ref{cor:maxevconcent} and \ref{cor:minevconcent}.
\begin{corollary}\label{cor:maxevconcent}
Suppose the same setting of Lemma \ref{lemma:1normconcent}. If $$x > 2||\Sigma_0||   +\Big\{C_1 c_{n,p} \Big(\frac{\log p}{n}\Big)^{(1-q)/2} + 
4\gamma \Big(\frac{\log p}{n}\Big)^{1/2} \Big\}$$ for a positive constant $C_1$, then there exist positive constants $C$, $\lambda$ such that
\bea
P^{\pi^i}(\lambda_{\max}(H_\gamma(\Sigma))  > x) \le C (2p^2\exp(-\lambda n)  +p^2\exp(-\lambda n^\delta )+ n^{(2-\lambda \gamma)(1-\delta)/2}),
\eea
for all $\gamma >2/\lambda$, all sufficiently large $n$ and all $\delta\in(0,1)$.

\end{corollary}
\begin{proof}
	We have 
\bea
P^{\pi^i}(\lambda_{\max}(H_\gamma(\Sigma)) > x \mid\bbX_n)  &\le& 
P^{\pi^i}( ||H_\gamma(\Sigma)-\Sigma_0|| > x-||\Sigma_0||\mid\bbX_n) \\
&\le&P^{\pi^i}( ||H_\gamma(\Sigma)-\Sigma_0||_1 > x-||\Sigma_0||\mid\bbX_n) 
\eea	
Since $x-||\Sigma_0|| > ||\Sigma_0||  +\Big\{C_1 c_{n,p} \Big(\frac{\log p}{n}\Big)^{(1-q)/2} + 
4\gamma \Big(\frac{\log p}{n}\Big)^{1/2} \Big\}$, we apply Lemma \ref{lemma:1normconcent} and the proof is completed.

\end{proof}

\begin{corollary}\label{cor:minevconcent}
Suppose the same setting of Lemma \ref{lemma:1normconcent}. If 
	\bea
	 x <  \lambda_{\min}(\Sigma_0)/2 -\Big\{C_1 c_{n,p} \Big(\frac{\log p}{n}\Big)^{(1-q)/2} + 
	 4\gamma \Big(\frac{\log p}{n}\Big)^{1/2} \Big\},
	\eea
	for a positive constant $C_1$,
	then there exist positive constants $C$, $\lambda$ such that
	\bea
	P^{\pi^i}(\lambda_{\min}(H_\gamma(\Sigma)) < x\mid\bbX_n) \le C (2p^2\exp(-\lambda n)  +p^2\exp(-\lambda n^\delta )+ n^{(2-\lambda \gamma)(1-\delta)/2}),
	\eea
	for all $\gamma >2/\lambda$, all sufficiently large $n$ and all $\delta\in(0,1)$.
\end{corollary}
\begin{proof}
	We have
\bea
	P^{\pi^i}(\lambda_{\min}(H_\gamma(\Sigma)) < x\mid\bbX_n)  &\le&
	P^{\pi^i}(\lambda_{\min}(\Sigma_0) -||H_\gamma(\Sigma)-\Sigma_0|| < x \mid\bbX_n)\\
	 &\le& P^{\pi^i}(||H_\gamma(\Sigma)-\Sigma_0||_1 >\lambda_{\min}(\Sigma_0)- x \mid\bbX_n)
\eea	
Since $\lambda_{\min}(\Sigma_0)-x > \lambda_{\min}(\Sigma_0)/2   +\Big\{C_1 c_{n,p} \Big(\frac{\log p}{n}\Big)^{(1-q)/2} + 
4\gamma \Big(\frac{\log p}{n}\Big)^{1/2} \Big\}$, the proof is completed by Lemma \ref{lemma:1normconcent}.

\end{proof}

\begin{lemma}\label{lem:4thmoment}
If $c_{n,p}^4 (\log p/n)^{2(1-q)} + \gamma^4((\log p)/n)^2$ is bounded and $(\nu_n -2p) \vee ||A||= o(n)$, then 
$E_{\Sigma_{0}}E^{\pi^i} (||(\Sigma_{0}-H_\gamma^{(\epsilon_n)}(\Sigma) )\Sigma_{0}^{-1}\mathbf{1} ||^4 \mid\bbX_n)$ is bounded by a positive constant for all sufficiently large $p$. 
\end{lemma}
\begin{proof}
    By Lemma \ref{lem:ineq_thres}, it suffices to show that $E_{\Sigma_0}\{E^{\pi^i}(||D||_1^4\mid\bbX_n)\}$ is bounded.
We have
\bea
E_{\Sigma_0}\{E^{\pi^i}(||D||_2^4\mid\bbX_n)\} &\le&   E_{\Sigma_0}\{E^{\pi^i}(\sup_i (\sum_j|d_{ij}|)^4\mid\bbX_n)\}\\
&\le& p^3  \sum_{i,j}E_{\Sigma_0}\{E^{\pi^i}(|d_{ij}|^4\mid\bbX_n)\}\\
&\le& p^5  \sup_{i,j}E_{\Sigma_0}\{E^{\pi^i}(| H_\gamma(\Sigma)_{ij} - \sigma_{0,ij}|^4 I(A_{ij}^c)  \mid\bbX_n)\}\\
&\le& 
p^5  \sup_{i,j} E_{\Sigma_0}\{E^{\pi^i}( |\sigma_{ij}-\sigma_{0,ij}|^4I(A_{ij}^c)  \mid\bbX_n)\} \\
&&+p^5  \sup_{i,j} E_{\Sigma_0}\{E^{\pi^i}( |\sigma_{0,ij}|^4I(A_{ij}^c) I(|\sigma_{ij}< \gamma(\log p/n)^{1/2})  \mid\bbX_n)\}
\eea
By \eqref{formula:probAij} and Lemma 6.1 in the supplementary material of \cite{lee2021condmean}, there exist positive constants $C_1$ and $\lambda_1$ such that
\bea
E_{\Sigma_0}\{E^{\pi^i}( |\sigma_{ij}-\sigma_{0,ij}|^4I(A_{ij}^c)  \mid\bbX_n)\} &\le& 
[ E\{( \sigma_{ij}-\sigma_{0,ij})^8 \mid \bbX_n\} ]^{1/2}  P(A_{ij}^c\mid\bbX_n)^{1/2}\\
&\le&\{ E( || \Sigma_{\{i,j\}\times \{i,j\} } -(\Sigma_0)_{\{i,j\}\times \{i,j\} } ||_2^8 \mid \bbX_n) \}^{1/2}  P(A_{ij}\mid\bbX_n)^{1/2} \\
&\le& C_1\frac{p^2}{n^2}  p^{-\lambda_1 \gamma}.
\eea
Next, we have 
\bea
&& E_{\Sigma_0}\{E^{\pi^i}( |\sigma_{0,ij}|^4I(A_{ij}^c) I(|\sigma_{ij}< \gamma(\log p/n)^{1/2})  \mid\bbX_n)\} \\
 &\le&  E_{\Sigma_0}\{E^{\pi^i}( |\sigma_{0,ij}|^4
  I(|\sigma_{ij}- \sigma_{0,ij}| \ge 3|\sigma_{0,ij}|/4 ) I(|\sigma_{0,ij}| > 4\gamma\sqrt{\frac{\log p}{n}}) \mid\bbX_n)\}\\
  &\le& |\sigma_{0,ij}|^4I(|\sigma_{0,ij}| > 4\gamma\sqrt{\frac{\log p}{n}}) 
   E_{\Sigma_0}\{P^{\pi^i}(  ||\Sigma_{\{i,j\}\times \{i,j\}} - (\Sigma_0)_{\{i,j\}\times \{i,j\}} >3|\sigma_{0,ij}|/4 \mid\bbX_n)\}\\
   &\le& |\sigma_{0,ij}|^4I(|\sigma_{0,ij}| > 4\gamma\sqrt{\frac{\log p}{n}})  \exp(-n\lambda_1 \sigma_{0,ij}^2) \\
   &= &\frac{1}{n^2}  (n\sigma_{0,ij}^2)^2 \exp(-n\lambda_1 \sigma_{0,ij}^2 )
  	I(n\sigma_{0,ij}^2 > 16\gamma^2 \log p),
\eea
where the first inequality is satisfied by \eqref{formula:Aupper2}.
Note that $f(x) = x^2\exp(-\lambda_1 x)$ is a decreasing function when $x>2/\lambda_1$. 
If $p$ is large enough to satisfy $16\gamma^2 \log p >2/\lambda_1$, then 
\bea
&&\frac{1}{n^2}  (n\sigma_{0,ij}^2)^2 \exp(-n\lambda_1 \sigma_{0,ij}^2 )
I(n\sigma_{0,ij}^2 > 16\gamma^2 \log p)\\
&\le& \frac{1}{n^2} ( 16\gamma^2 \log p)^2 \exp ( -\lambda_1  16\gamma^2 \log p)\\
&=& \frac{( 16\gamma^2 \log p)^2 }{ n^2   p^{\lambda_1  16\gamma^2}}.
\eea
Thus, 
\bea
E_{\Sigma_0}\{E^{\pi^i}(||D||_2^4\mid\bbX_n)\}  \le \frac{p^{5-\lambda_1 \gamma }}{n^2} + \frac{( 16\gamma^2 \log p)^2 }{ n^2   p^{\lambda_1  16\gamma^2}},
\eea
which is bounded by a positive constant when $\gamma$ is a sufficiently large constant.

\end{proof}

 Using the lemmas and corollaries, we prove Theorem \ref{main-thm:portfolio}.
    \begin{proof}[Proof of Theorem \ref{main-thm:portfolio}]	
    
    We have 
    \bea
    \Bigg|\Bigg|\frac{H_\gamma^{(\epsilon_n)}(\Sigma)^{-1}\mathbf{1}}{\mathbf{1}^TH_\gamma^{(\epsilon_n)}(\Sigma)^{-1}\mathbf{1}} -  \frac{\Sigma_0^{-1}\mathbf{1}}{\mathbf{1}^T\Sigma_0^{-1}\mathbf{1}} \Bigg|\Bigg|_2^2
    &\le&  
   	\frac{2}{|\mathbf{1}^TH_\gamma^{(\epsilon_n)}(\Sigma)^{-1}\mathbf{1}|^2} 
   	||(H_\gamma^{(\epsilon_n)}(\Sigma)^{-1} -\Sigma_{0}^{-1})\mathbf{1}||_2^2 \\
   	&&+ \frac{2 ||\Sigma_{0}^{-1}\mathbf{1}/\sqrt{p}||^2}{|(\mathbf{1}/\sqrt{p})^T\Sigma_{0}^{-1}\mathbf{1}/\sqrt{p}|^2} 
   	\frac{|(\mathbf{1}/\sqrt{p})^T (H_\gamma^{(\epsilon_n)}(\Sigma)^{-1} -\Sigma_{0}^{-1})\mathbf{1}|^2}{|\mathbf{1}^TH_\gamma^{(\epsilon_n)}(\Sigma)^{-1}\mathbf{1}|^2}\\
   	&\le& 	\frac{2   	||(H_\gamma^{(\epsilon_n)}(\Sigma)^{-1} -\Sigma_{0}^{-1})\mathbf{1}||_2^2}{|\mathbf{1}^TH_\gamma^{(\epsilon_n)}(\Sigma)^{-1}\mathbf{1}|^2}  \Big( 1+  \frac{ ||\Sigma_{0}^{-1}||^2}{\lambda_{\min}(\Sigma_{0}^{-1})^2} \Big) \\
   	&\le& 	\frac{2}{p^2}   	||(H_\gamma^{(\epsilon_n)}(\Sigma)^{-1} -\Sigma_{0}^{-1})\mathbf{1}||_2^2   \lambda_{\max}(H_\gamma^{(\epsilon_n)}(\Sigma))^2  \Big( 1+  \frac{ ||\Sigma_{0}^{-1}||^2}{\lambda_{\min}(\Sigma_{0}^{-1})^2} \Big)\\
   	&\le& 	\frac{2}{p^2}   	||(\Sigma_{0}-H_\gamma^{(\epsilon_n)}(\Sigma) )\Sigma_{0}^{-1}\mathbf{1}||_2^2  ||H_\gamma^{(\epsilon_n)}(\Sigma)^{-1}||^2 ||H_\gamma^{(\epsilon_n)}(\Sigma)||^2  \Big( 1+  \frac{ ||\Sigma_{0}^{-1}||^2}{\lambda_{\min}(\Sigma_{0}^{-1})^2} \Big).
    \eea
    Next, for an arbitrary positive constant $C_1$, we have
    \bean
 &&   E^{\pi^i} (	||(\Sigma_{0}-H_\gamma^{(\epsilon_n)}(\Sigma) )\Sigma_{0}^{-1}\mathbf{1}||_2^2  ||H_\gamma^{(\epsilon_n)}(\Sigma)^{-1}||^2 ||H_\gamma^{(\epsilon_n)}(\Sigma)||^2 \mid\bbX_n)\nonumber\\
    &\le&C_1^4 E^{\pi^i} ( 	||(\Sigma_{0}-H_\gamma^{(\epsilon_n)}(\Sigma) )\Sigma_{0}^{-1}\mathbf{1}||_2^2 \mid\bbX_n) \label{formula:portupper1}\\
    &&+ E^{\pi^i} (||(\Sigma_{0}-H_\gamma^{(\epsilon_n)}(\Sigma) )\Sigma_{0}^{-1}\mathbf{1} ||^4 \mid\bbX_n)^{1/2}  E^{\pi^i}\{I(||H_\gamma^{(\epsilon_n)}(\Sigma)^{-1}||>C_1  ) I(||H_\gamma^{(\epsilon_n)}(\Sigma)||>C_1)\mid\bbX_n\}^{1/2}.\label{formula:portupper2}
    \eean
    We show the upper bound of \eqref{formula:portupper2}. 
    Lemma \ref{lem:4thmoment} shows that $E^{\pi^i} (||(\Sigma_{0}-H_\gamma^{(\epsilon_n)}(\Sigma) )\Sigma_{0}^{-1}\mathbf{1} ||^4 \mid\bbX_n)^{1/2} $ is bounded by a positive constant. 
    There exist positive constants $C_2$ and $\lambda_1$ such that
    \bea
    &&E^{\pi^i}\{I(||H_\gamma^{(\epsilon_n)}(\Sigma)^{-1}||>C_1  ) I(||H_\gamma^{(\epsilon_n)}(\Sigma)||>C_1)\mid\bbX_n\}\\
    &\le& E^{\pi^i}\{I(||H_\gamma(\Sigma)^{-1}||>C_1  ) I(||H_\gamma(\Sigma)||>C_1)\mid\bbX_n\} \\
    &&+ E^{\pi^i}\{I(\lambda_{\min}(H_\gamma(\Sigma))< \epsilon_n  ) \mid\bbX_n\} \\
    &\le& P^{\pi^i} (||H_\gamma(\Sigma)||>C_1\mid\bbX_n)^{1/2}
    P^{\pi^i} (\lambda_{\min}(H_\gamma(\Sigma))<C_1^{-1}\mid\bbX_n)^{1/2}\\
    &&+ E^{\pi^i}\{I(\lambda_{\min}(H_\gamma(\Sigma))< \epsilon_n  ) \mid\bbX_n\} \\
  	 &\le& P^{\pi^i} (||H_\gamma(\Sigma)||>C_1\mid\bbX_n)^{1/2}
  	P^{\pi^i} (\lambda_{\min}(H_\gamma(\Sigma))<C_1^{-1}\mid\bbX_n)^{1/2}\\
     &&+ P^{\pi^i}(\lambda_{\min}(\Sigma_{0})- \epsilon_n < ||H_\gamma(\Sigma)-\Sigma_{0}|| \mid\bbX_n )  \\
    &\le& C_2 (p^2\exp(-\lambda_1 n)  +p^2\exp(-\lambda_1 n^\delta )+ n^{(2-\lambda_1 \gamma)(1-\delta)/2}),
        \eea
     for all sufficiently large $n$.
    The last inequality is satisfied by Corollarys \ref{cor:maxevconcent}, \ref{cor:minevconcent} and Lemma \ref{lemma:1normconcent} since $\epsilon_n<\lambda_{\min}(\Sigma_{0})/2$ and $c_{n,p}((\log p)/n)^{(1-q)/2} + 4\gamma ((\log p)/n)^{1/2} \lra 0$ as $n\lra \infty$. 
    
    We show the upper bound of \eqref{formula:portupper1}. There exists positive constant $C_3$ and $\lambda_2$ such that
    \bea
     && E_{\Sigma_0} E^{\pi^i}( ||(\Sigma_{0}-H_\gamma^{(\epsilon_n)}(\Sigma) )\Sigma_{0}^{-1}\mathbf{1} ||^2 \mid\bbX_n) \\&\le&
     E_{\Sigma_0} E^{\pi^i}(   ||(\Sigma_{0}-H_\gamma(\Sigma) )\Sigma_{0}^{-1}\mathbf{1} ||^2\mid\bbX_n)\\
        &&+E_{\Sigma_0} E^{\pi^i}(||(\Sigma_{0}-H_\gamma^{(\epsilon_n)}(\Sigma) ) ||^2I(\lambda_{\min}(H_\gamma(\Sigma))<\epsilon_n) \mid\bbX_n)  ||\Sigma_{0}^{-1}\mathbf{1} ||^2\\
         &\le& 2E_{\Sigma_0} E^{\pi^i}(|| diag(\Sigma_0 - H_\gamma(\Sigma)) \Sigma_{0}^{-1}\mathbf{1} ||^2 + 
        2||diag^c(\Sigma_0 - H_\gamma(\Sigma)) \Sigma_{0}^{-1}\mathbf{1} ||^2\mid\bbX_n)\\
        &&+||\Sigma_{0}^{-1}\mathbf{1} ||^2E_{\Sigma_0} E^{\pi^i}(||(\Sigma_{0}-H_\gamma^{(\epsilon_n)}(\Sigma) ) ||^4\mid\bbX_n )^{1/2}
      E_{\Sigma_0}  P^{\pi^i}(\lambda_{\min}(\Sigma_{0})- \epsilon_n < ||H_\gamma(\Sigma)-\Sigma_{0}|| \mid\bbX_n )^{1/2}\\
        &\le&  2E_{\Sigma_0} E^{\pi^i}(|| diag(\Sigma_0 - H_\gamma(\Sigma)) \Sigma_{0}^{-1}\mathbf{1} ||^2 + 
        2||diag^c(\Sigma_0 - H_\gamma(\Sigma)) \Sigma_{0}^{-1}\mathbf{1} ||^2\mid\bbX_n)\\
&&+ C_3  p (p^2\exp(-\lambda_2 n)  +p^2\exp(-\lambda_2 n^\delta )+ n^{(2-\lambda_2 \gamma)(1-\delta)/2}),
    \eea
    for all sufficiently large $n$. The last inequality is satisfied by Lemma \ref{lemma:1normconcent}.
    Next, we have 
    \bea
    &&E_{\Sigma_0} E^{\pi^i}(|| diag(\Sigma_0 - H_\gamma(\Sigma)) \Sigma_{0}^{-1}\mathbf{1} ||^2\mid\bbX_n)\\
    &\le&  ||\Sigma_0^{-1}||^2 \sum_{i=1}^p E_{\Sigma_0} E^{\pi^i}((\Sigma_{0,ii} -H_\gamma(\Sigma)_{ii})^2\mid\bbX_n)\\
    &\le& 2 ||\Sigma_0^{-1}||^2   \sum_{i=1}^{p-1} E_{\Sigma_0} E^{\pi^i}(||(\Sigma_0)_{i:i+1,i:i+1} - H_\gamma(\Sigma)_{i:i+1,i:i+1}||^2 \mid\bbX_n)
    \eea
    We apply Theorem \ref{main-thm:upperbound} considering that the true covariance is $(\Sigma_0)_{i:i+1,i:i+1}$. There exists a positive constant $C_4$ such that
   \bea
	   \sum_{i=1}^{p-1}E_{\Sigma_0} E^{\pi^i}( ||(\Sigma_0)_{i:i+1,i:i+1} - H_\gamma(\Sigma)_{i:i+1,i:i+1}||^2 \mid\bbX_n) \le 
	  C_4 \frac{p}{n},
   \eea
   for all sufficiently large $n$. 
    Lemmas \ref{lem:ineq_thres} and \ref{lemma:concentD} give that there exist a positive constant $C_4$ such that
    \bea
 &&  E_{\Sigma_0} E^{\pi^i}(||diag^c(\Sigma_0 - H_\gamma(\Sigma)) \Sigma_{0}^{-1}\mathbf{1} ||^2 \mid\bbX_n)\\
 &\le&
 ||  \Sigma_{0}^{-1}\mathbf{1} ||^2 E_{\Sigma_0} E^{\pi^i}\Big(2 || D||_1^2  + 2\Big\{4C c_{n,p} \Big(\frac{\log p}{n}\Big)^{(1-q)/2}\Big\}^2\mid\bbX_n\Big) \\
 &\le&  pC_4 (\frac{1}{n} + c_{n,p}^2 \Big(\frac{\log p}{n}\Big)^{(1-q)}),
    \eea
    for all sufficiently large $n$. 
    
    Collecting all the inequalities above, we have 
    \bea
        &&E_{\Sigma_0}  E^{\pi^i}  (||(\Sigma_{0}-H_\gamma^{(\epsilon_n)}(\Sigma) )\Sigma_{0}^{-1}\mathbf{1} ||^2 \mid\bbX_n) \\
        &\le& \frac{1}{p}   (p^2\exp(-\lambda_2 n)  +p^2\exp(-\lambda_2 n^\delta )+ n^{(2-\lambda_2 \gamma)(1-\delta)/2} + \frac{1}{n} + c_{n,p}^2 \Big(\frac{\log p}{n}\Big)^{(1-q)}),
    \eea
    and
    \bea
   E_{\Sigma_0}  E^{\pi^i} \Bigg(	 \Bigg|\Bigg|\frac{H_\gamma^{(\epsilon_n)}(\Sigma)^{-1}\mathbf{1}}{\mathbf{1}^TH_\gamma^{(\epsilon_n)}(\Sigma)^{-1}\mathbf{1}} -  \frac{\Sigma_0^{-1}\mathbf{1}}{\mathbf{1}^T\Sigma_0^{-1}\mathbf{1}} \Bigg|\Bigg|_2^2\mid\bbX_n\Bigg) \lesssim \frac{1}{p} \Big(  \frac{1}{n} + c_{n,p}^2 \Big(\frac{\log p}{n}\Big)^{(1-q)}  \Big).
    \eea  
    Since 
    \bea
  \Bigg|\Bigg|\frac{\Sigma_0^{-1}\mathbf{1}}{\mathbf{1}^T\Sigma_0^{-1}\mathbf{1}} \Bigg|\Bigg|^2\le \frac{||\Sigma_0^{-1}||^2 ||\Sigma_0||^2}{p} ,
  \eea
  the proof is completed. 

        \end{proof}

\se{Proof of Lemmas \ref{main-lemma:lowerbound1} and \ref{main-lemma:lowerbound2}}
For the proof of Lemma \ref{main-lemma:lowerbound1}, we present Lemma \ref{lem:lower1}.

\begin{lemma}\label{lem:lower1}
	Suppose the notation of $\calG^{(1)}$ in \ref{main-calG1}.
	If $\tau/\sqrt{n} \le M/3$ and $\tau/M \le 1/3$, then there exists a positive constant $C$ such that
	\bea
	\min_{\{(\theta,\theta')\in\{0,1\}^{p'}:H(\theta,\theta')=1\}} ||\mathbb{P}_\theta \wedge \mathbb{P}_{\theta'} || \ge C.
	\eea
\end{lemma}
\begin{proof}
	Since $||\mathbb{P}_\theta \wedge \mathbb{P}_{\theta'} ||  = 1- 0.5|| \mathbb{P}_\theta -  \mathbb{P}_{\theta'} ||_1$, it suffices to show $|| \mathbb{P}_\theta -  \mathbb{P}_{\theta'} ||_1\le 1$. 
	We have
	\bea
	|| \mathbb{P}_\theta -  \mathbb{P}_{\theta'} ||_1^2 &\le& 
	2K(\mathbb{P}_{\theta'}\mid \mathbb{P}_{\theta}) \\
	&\le& n [ tr(\Sigma(\theta')\Sigma^{-1}(\theta)) - \log det (\Sigma(\theta')\Sigma^{-1}(\theta)) - p  ].
	\eea
	Let $D := \Sigma^{-1}(\theta) - \Sigma^{-1}(\theta')$ and $j$ be the index such that $D_{jj} >0$. Note that there is only one index of which $D_{jj}$ is positive when $H(\theta,\theta')=1$.  
	Let $x = \Sigma(\theta')_{jj} D_{jj}$. We have
	\bea
	tr(\Sigma(\theta')\Sigma^{-1}(\theta)) - p  &=& tr(\Sigma(\theta') D)\\
	&=& x\\
	\log det (\Sigma(\theta')\Sigma^{-1}(\theta)) &=& \log det (\Sigma(\theta')D +I_p)\\
	&=& \log (1+x),
	\eea
	thus
	\bea
	|| \mathbb{P}_\theta -  \mathbb{P}_{\theta'} ||_1^2 &\le& n(x- \log (1+x)) \\
	&=& \frac{n}{(1+\xi)^2} \frac{x^2}{2},
	\eea
	where $\xi \in [-|x|,|x|]$.
	Since $|x| \le (\tau/\sqrt{n}) /(M - \tau/\sqrt{n})$ and $\tau/\sqrt{n} \le M/3$,
	\bea
	|| \mathbb{P}_\theta -  \mathbb{P}_{\theta'} ||_1^2&\le& 
	\frac{nx^2}{2(1+\xi)^2}  \\
	&\le& 	\frac{nx^2}{2(1-0.5)^2}  \\
	&\le&  \frac{9\tau^2}{2M^2}.
	\eea
	Since $\tau/M \le 1/3$, the proof is completed.
	
\end{proof}

\begin{proof}[Proof of Lemma \ref{main-lemma:lowerbound1}]
	By the definition of $\Sigma(\theta)^{-1}$, 
	\bea
	\frac{\Sigma(\theta)^{-1} \mathbf{1}}{\mathbf{1}^T\Sigma^{-1}(\theta) \mathbf{1}}
	=\frac{1}{p} \frac{( (\Sigma(\theta)^{-1})_{11} ,\ldots,(\Sigma(\theta)^{-1})_{pp} )^T }{M+|\theta|\tau/(p\sqrt{n})},
	\eea
	and
	\bea
	\frac{\Sigma(\theta)^{-1} \mathbf{1}}{\mathbf{1}^T\Sigma^{-1}(\theta) \mathbf{1}} - 
	\frac{\Sigma(\theta')^{-1} \mathbf{1}}{\mathbf{1}^T\Sigma^{-1}(\theta') \mathbf{1}}
	= \frac{(v_1,v_2,\ldots,v_p)^T}{ p(M + |\theta| \tau/(p\sqrt{n})) (M + |\theta'| \tau/(p\sqrt{n}))},
	\eea
	for some $v_1,v_2,\ldots,v_p \in \bbR$. 
	When $i\le p'$ and $\theta_i\neq\theta'_i$, 
	\bea
	v_i = M\tau \frac{|\theta'|-|\theta|}{p\sqrt{n}} + |\theta_i-\theta_i'| \frac{\tau}{\sqrt{n}} \Big(M+\tau \frac{|\theta'|\theta_i + |\theta|\theta'_i}{p\sqrt{n}}\Big),
	\eea
	and
	\bea
	|v_i| &\ge& \frac{\tau}{\sqrt{n}} (M+\tau \frac{|\theta'|\theta_i + |\theta|\theta'_i}{p\sqrt{n}}) - M\tau/(2\sqrt{n})\\
	&\ge& M\tau/(2\sqrt{n}),
	\eea
	since $||\theta'|-|\theta||<p/2$.
	Thus, we have
	\bea
	|| (v_1,\ldots,v_p)^T||^2_2 &\ge& H(\theta,\theta') M^2\tau^2/(4n),\\
	\min_{H(\theta,\theta')\ge 1}\frac{||w_{GMV}(\Sigma(\theta'))-w_{GMV}(\Sigma(\theta))||^2}{H(\theta,\theta')} &\ge& 
	\frac{9\tau^2}{16p^2n},
	\eea
	where the last inequality is satisfied since $|\theta|\tau/(p\sqrt{n}) \le M/3$.
	By applying this inequality and the result of Lemma \ref{lem:lower1} to Assouad's lemma, we complete the proof.

\end{proof}

Next, we give the proof of Lemma \ref{main-lemma:lowerbound2}. For this proof we present Lemma \ref{lemma:lowerbound2}.
	
	\begin{lemma}\label{lemma:lowerbound2}
    Suppose the notation of $\calG^{(2)}$ in \eqref{main-calG2} and assume $2k\epsilon_{n,p}M^{-1}<1$.
		Let 
		$
		(\Sigma(\theta') - \Sigma(\theta))\Sigma(\theta)^{-1} \mathbf{1} = (x_1,\ldots,x_p)$.
		For $i\le r$ with $\gamma_i(\theta') \neq \gamma_i(\theta)$, 
		$$x_i =  (\gamma_i(\theta') - \gamma_i(\theta)) k \epsilon_{n,p} (M^{-1}+ \delta_i),$$ 
		with
		$$|\delta_i| \le \frac{2k\epsilon_{n,p}}{M^2 (1-2k\epsilon_{n,p}/M)}.$$
		
	\end{lemma}

	\begin{proof}

		By the definition of the infinity norm and Theorem 2.3.4. in \cite{golub1996matrix}, 
		we have 
	\bea
	||(\Sigma(\theta)^{-1} - M^{-1} I_p) \mathbf{1}||_\infty
	&\le& || \Sigma(\theta)^{-1} - M^{-1} I_p||_\infty \\
	&\le& 	\frac{1}{M^2}\frac{||\Sigma(\theta) - M I_p ||_\infty }{1-M^{-1}||\Sigma(\theta)- MI_p||_\infty}.
	\eea	
	Since $M^{-1}||\Sigma(\theta)- M I_p ||_\infty \le 2k\epsilon_{n,p}M^{-1}<1$, we have 
	\bean	\label{formula:inequal_lower2}
	||(\Sigma(\theta)^{-1} - M^{-1} I_p) \mathbf{1}||_\infty \le  \frac{2k\epsilon_{n,p}}{M^2 (1-2k\epsilon_{n,p}/M)}.
	\eean
		
	Next, we check $x_i$ when $i\le r$, $\gamma_i(\theta')=1$ and $\gamma_i(\theta)=0$. 
	By the definition of $\Sigma(\theta')$, we have
	\bea
	x_i  &=&  [\Sigma(\theta')]_{i} \Sigma(\theta)^{-1} \mathbf{1} \\
	&=& M^{-1}[\Sigma(\theta')]_{i} \mathbf{1} + [\Sigma(\theta')]_{i} ( (\Sigma(\theta)^{-1}-M^{-1}I_p) \mathbf{1})\\
	&=& M^{-1} k\epsilon_{n,p} +   [\Sigma(\theta')]_{i} ( (\Sigma(\theta)^{-1}-M^{-1}I_p) \mathbf{1}).
	\eea
	By \eqref{formula:inequal_lower2}, 
	\bea
	| [\Sigma(\theta')]_{i} ( (\Sigma(\theta)^{-1}-M^{-1}I_p) \mathbf{1})| &\le&
	|| [\Sigma(\theta')]_{i}||_1 || (\Sigma(\theta)^{-1}-M^{-1}I_p) \mathbf{1}||_\infty \\
	&\le& k\epsilon_{n,p}  \frac{2k\epsilon_{n,p}}{M^2 (1-2k\epsilon_{n,p}/M)}.
	\eea
	Thus, we have
	\bea
	|x_i - M^{-1} k\epsilon_{n,p}| \le  k\epsilon_{n,p}  \frac{2k\epsilon_{n,p}}{M^2 (1-2k\epsilon_{n,p}/M)},
	\eea
	for $i\le r$ with $\gamma_i(\theta')=1$ and $\gamma_i(\theta)=0$. 
	Likewise, when $\gamma_i(\theta')=0$ and $\gamma_i(\theta)=1$, we have
		\bea
	|x_i + M^{-1} k\epsilon_{n,p}| \le  k\epsilon_{n,p}  \frac{2k\epsilon_{n,p}}{M^2 (1-2k\epsilon_{n,p}/M)}.
	\eea
	Collecting these inequalities, we complete the proof.
		
	\end{proof}

	\begin{proof}[Proof of Lemma \ref{main-lemma:lowerbound2}]
		We have
		\bea
		\frac{\Sigma(\theta)^{-1} \mathbf{1}}{\mathbf{1}^T\Sigma(\theta)^{-1} \mathbf{1}} - \frac{\Sigma(\theta')^{-1} \mathbf{1}}{\mathbf{1}^T\Sigma(\theta')^{-1} \mathbf{1}} &=&
		\frac{\Sigma(\theta')^{-1}}{\mathbf{1}^T \Sigma(\theta)^{-1}\mathbf{1}}(I_p - \frac{\mathbf{1}\mathbf{1}^T\Sigma(\theta')^{-1} }{\mathbf{1}^T \Sigma(\theta')^{-1}\mathbf{1}} ) (\Sigma(\theta') - \Sigma(\theta))\Sigma(\theta)^{-1} \mathbf{1},
		\eea	
		and 
		\bean
		&&	\Bigg|\Bigg|	\frac{\Sigma(\theta)^{-1} \mathbf{1}}{\mathbf{1}^T\Sigma(\theta)^{-1} \mathbf{1}} - \frac{\Sigma(\theta')^{-1} \mathbf{1}}{\mathbf{1}^T\Sigma(\theta')^{-1} \mathbf{1}}\Bigg|\Bigg|_2\nonumber\\
		 &=& 
			\sup_{v:||v||_2\neq 0 } \Bigg[
					\frac{v^T\Sigma(\theta')^{-1}}{||v||_2\mathbf{1}^T \Sigma(\theta)^{-1}\mathbf{1}}(I_p - \frac{\mathbf{1}\mathbf{1}^T\Sigma(\theta')^{-1} }{\mathbf{1}^T \Sigma(\theta')^{-1}\mathbf{1}} ) (\Sigma(\theta') - \Sigma(\theta))\Sigma(\theta)^{-1} \mathbf{1}
			\Bigg].\label{formula:gmvpspectral}
		\eean
	Define $v_0 \in\bbR^p$ such that $v_0^T \Sigma(\theta')^{-1}= (w_1,\ldots,w_p)$ and 
	$$
	w_i  = \begin{cases}
		\gamma_i(\theta') - \gamma_i(\theta), & \text{if } 1\le i\le r,\\
		\gamma_{i-r}(\theta) - \gamma_{i-r}(\theta'), & \text{if } r+1\le i\le 2r,\\
		0, & \text{otherwise},
	\end{cases}
	$$
	which satisfies
	\bean
	(w_1,\ldots,w_p) \mathbf{1} &=& 0.\label{formula:wsum}
	\eean
	
	Let $(\Sigma(\theta') - \Sigma(\theta))\Sigma(\theta)^{-1} \mathbf{1}  = (x_1,x_2,\ldots,x_p)$. 
	When $r<i \le 2r$, $x_i=0$ by the definition of $\Sigma(\theta)$ in $\calG^{(2)}$ and the fact that $2r\le p/2$.
	Then, using Lemma \ref{lemma:lowerbound2}, we have
	\bea
	(w_1,\ldots,w_p) 	(\Sigma(\theta') - \Sigma(\theta))\Sigma(\theta)^{-1} \mathbf{1}  &=&  \sum_{i=1}^p x_i w_i\\
	& = &H(\theta,\theta') k\epsilon_{n,p} (M^{-1} + \delta),
	\eea
	with 
	$$|\delta| \le \frac{2k\epsilon_{n,p}}{M^2 (1-2k\epsilon_{n,p}/M)}.$$
	Thus, we get
	\bean
	|(w_1,\ldots,w_p) 	(\Sigma(\theta') - \Sigma(\theta))\Sigma(\theta)^{-1} \mathbf{1}| &\ge&   
	k\epsilon_{n,p}H(\theta,\theta') (M^{-1}- \frac{2k\epsilon_{n,p}}{M^2 (1-2k\epsilon_{n,p})}	).\label{formula:wproduct}
	\eean
	We also have 
	\bean
	||v_0|| &\le& ||\Sigma(\theta)^{-1}||~ ||w|| \nonumber\\
	&\le& \sqrt{2H(\theta,\theta')}/\lambda_{\min}(\Sigma(\theta)).\label{formula:vnorm}
	\eean
	 
	 Collecting \eqref{formula:gmvpspectral}, \eqref{formula:wsum}, \eqref{formula:wproduct} and \eqref{formula:vnorm}, we obtain
	 \bea
	 			\Bigg|\Bigg|	\frac{\Sigma(\theta)^{-1} \mathbf{1}}{\mathbf{1}^T\Sigma(\theta)^{-1} \mathbf{1}} - \frac{\Sigma(\theta')^{-1} \mathbf{1}}{\mathbf{1}^T\Sigma(\theta')^{-1} \mathbf{1}}\Bigg|\Bigg|_2
	 &\ge&
	 	\frac{v_0^T\Sigma(\theta')^{-1}}{||v_0||_2\mathbf{1}^T \Sigma(\theta)^{-1}\mathbf{1}}(I_p - \frac{\mathbf{1}\mathbf{1}^T\Sigma(\theta')^{-1} }{\mathbf{1}^T \Sigma(\theta')^{-1}\mathbf{1}} ) (\Sigma(\theta') - \Sigma(\theta))\Sigma(\theta)^{-1} \mathbf{1} \\
	 &\ge& \frac{\sqrt{2H(\theta,\theta')}\lambda_{\min}(\Sigma(\theta))}{ \mathbf{1}^T \Sigma(\theta)^{-1} \mathbf{1}} k\epsilon_{n,p}
	 (M^{-1}- \frac{2k\epsilon_{n,p}}{M^2 (1-2k\epsilon_{n,p})}	)\\
	 &\ge& \frac{\sqrt{2H(\theta,\theta')}}{p} k\epsilon_{n,p}
	 (M^{-1}- \frac{2k\epsilon_{n,p}}{M^2 (1-2k\epsilon_{n,p})}	).
	 	 \eea
	 	 
	Let $\epsilon_{n,p} = (0.25(\log 2) \min(1,M_1)C_0^{-1})^{1/(1-q)} ((\log p)/n)^{1/2}$ and $k= \max(\lfloor  c_{n,p}\epsilon_{n,p}^{-q}\rfloor,0)$.
	Then, there exist a positive constant $C_2$ such that
	\bea
	\min_{H(\gamma(\theta),\gamma(\theta'))\ge 1} \frac{||w_{GMV}(\Sigma(\theta))-w_{GMV}(\Sigma(\theta'))||^2}{H(\gamma(\theta),\gamma(\theta'))} \frac{r}{2} \ge
	\frac{C_2 c_{n,p}^2  }{p} \Big(\frac{\log p}{n}\Big)^{1-q},
	\eea
	for all sufficiently large $n$. 
	By Lemma 6 in \cite{cai2012optimal}, 
	\bea
	\min_{1\le i\le r} || \bar{\mathbb{P}}_{i,0} \wedge \bar{\mathbb{P}}_{i,1} || \ge C_3,
	\eea
	for a positive constant. Combining the two inequalities, we complete the proof.

	\end{proof}

\bibliographystyle{dcu}
\bibliography{cov-ppp}

% --- supplement: ver210820/supp.tex ---

\maketitle

In the supplementary material, we provide the proofs of lemmas and theorems in the main paper. 

We let $A_{\{i,j\}\times\{i,j\}}$ denote $2\times 2$ sub-matrix of $A$ with $i,j$ row and column for a matrix $A$. 
For a $p\times p$ symmetric matrix $A$, let $diag(A)$ be a $p\times p$ diagonal matrix in which diagonal elements are equal to those of $A$. We also let $diag^c(A) = A-diag(A)$.

\se{Proof of Theorem \ref{main-thm:upperbound} and Related Lemma}
In this section, we prove Theorem \ref{main-thm:upperbound}.
First, we present Lemma \ref{lem:ineq_thres} which gives an inequality for thresholded covariances. %used in the proof of Theorem 3 in \cite{cai2012optimal}.

\begin{lemma}\label{lem:ineq_thres} 
Let $n,p$ and $r$ be positive integers and $\gamma$ be a positive real number. 
Suppose $\Sigma_0 = (\sigma_{0,ij})_{1\le i,j\le p} \in \calG_q(c_{n,p},M_0,M_1)$.
There exist a positive constant $C$ such that 
\bea
||H_\gamma(\Sigma)-\Sigma_0||_1^r &\le& 2^{r-1} ||D ||_1^r +  4^{r-1}\Big\{4C c_{n,p} \Big(\frac{\log p}{n}\Big)^{(1-q)/2}\Big\}^r + 
  \Big\{4\gamma \Big(\frac{\log p}{n}\Big)^{1/2}\Big\}^r,\\
 ||diag^c\{H_\gamma(\Sigma)-\Sigma_0\}||_1^r &\le& 2^{r-1} ||D ||_1^r +  2^{r-1}\Big\{4C c_{n,p} \Big(\frac{\log p}{n}\Big)^{(1-q)/2}\Big\}^r,
\eea
where $D=(d_{ij})_{1\le i,j\le p}$, $d_{ij}=(H_\gamma(\Sigma)_{ij}-\sigma_{0,ij}) I(A_{ij}^c)$, and 
\bea
A_{ij}=\Big\{ |(H_\gamma(\Sigma))_{ij}-\sigma_{0,ij}|\le 4\min \Big\{ |\sigma_{0,ij}|,\gamma\sqrt{\frac{\log p}{n}}\Big\} \Big\}.
\eea
		
\end{lemma}
\begin{proof}
We have 
\bea
||H_\gamma(\Sigma)-\Sigma_0||_1^r &\le& 
2^{r-1} ||D ||_1^r + 2^{r-1} || \{(H_\gamma(\Sigma)_{ij} - \sigma_{0,ij})I(A_{ij})   \}_{1\le i,j\le p} ||_1^r\\
&\le& 2^{r-1} ||D ||_1^r + 4^{r-1} [\sup_j \sum_{i\neq j} |(H_\gamma(\Sigma)_{ij}- \sigma_{0,ij}|I(A_{ij})]^{r} + 4^{r-1} \{\sup_j |(H_\gamma(\Sigma)_{jj} - \sigma_{0,jj}|I(A_{jj})\}^r\\
&\le& 2^{r-1} ||D ||_1^r + 4^{r-1} \Big[\sup_j \sum_{i\neq j} 
4\min \Big\{ |\sigma_{0,ij}| ,\gamma \Big(\frac{\log p}{n}\Big)^{1/2}\Big\}\Big]^r
 + 4^{r-1} \Big\{4\gamma \Big(\frac{\log p}{n}\Big)^{1/2}\Big\}^r,
\eea
where the third inequality is satisfied by the definition of $A_{ij}$.
Inequality (51) in the proof of Theorem 3 in \cite{cai2012optimal} gives  
\bea
\sum_{i\neq j} 
\min \Big\{ |\sigma_{0,ij}| ,\gamma \Big(\frac{\log p}{n}\Big)^{1/2}\Big\} \le 
C c_{n,p} \Big(\frac{\log p}{n}\Big)^{(1-q)/2},
\eea
for a positive constant $C$.
Thus, we get
\bea
||H_\gamma(\Sigma)-\Sigma_0||_1^r &\le& 2^{r-1} ||D ||_1^r +  4^{r-1}\Big\{4C c_{n,p} \Big(\frac{\log p}{n}\Big)^{(1-q)/2}\Big\}^r + 4^{r-1} \Big\{4\gamma \Big(\frac{\log p}{n}\Big)^{1/2}\Big\}^r.
\eea

Likewise, we have
\bea
||diag^c\{H_\gamma(\Sigma)-\Sigma_0\}||_1^r &\le& 
2^{r-1} ||diag^c(D) ||_1^r + 2^{r-1} ||diag^c[ \{(H_\gamma(\Sigma)_{ij} - \sigma_{0,ij})I(A_{ij})   \}_{1\le i,j\le p}] ||_1^r\\
&\le& 2^{r-1} ||D ||_1^r +  2^{r-1} [\sup_j \sum_{i\neq j} |(H_\gamma(\Sigma)_{ij}- \sigma_{0,ij}|I(A_{ij})]^{r} \\
&\le& 2^{r-1} ||D ||_1^r +2^{r-1}\Big\{4C c_{n,p} \Big(\frac{\log p}{n}\Big)^{(1-q)/2}\Big\}^r .
\eea

\end{proof}

\begin{lemma}\label{lem:concent_element} 
Suppose $\Sigma \in \calG_q(c_{n,p},M_0,M_1)$.
If $x<4 ||\Sigma_0||$ and $(\nu_n -2p) \vee ||A|| = o(n)$, then there exist positive constants $C$ and $\lambda$ such that
\bea
E_{\Sigma_0}\{P^{\pi^i}( |\sigma_{ij} -\sigma_{0,ij}| \ge x  \mid\bbX_n)\} &\le&
C( \exp(-\lambda n)  + \exp(-\lambda n x^2)),
\eea
for arbitrary $i,j\in [p]$. 

\end{lemma}
\begin{proof}

	Since $[\Sigma_{\{i,j\}\times \{i,j\}}|\bbX_n]\sim IW_2(n+\nu_n-2p+4, A_{\{i,j\}\times \{i,j\}}+nS_{\{i,j\}\times \{i,j\}})$ \citep[Theorem 5.2.3]{press2012applied},
			Lemma 6.2 in the supplementary material of \cite{lee2021condmean} gives
	\bea
	&&E_{\Sigma_0}\{P^{\pi^i}( |\sigma_{ij} -\sigma_{0,ij}| \ge x     \mid\bbX_n)\}\\
	&\le& E_{\Sigma_0}\{P^{\pi^i}( || \Sigma_{\{i,j\}\times \{i,j\} } -(\Sigma_0)_{\{i,j\}\times \{i,j\} } ||_2  \ge x   \mid\bbX_n)\}\\
	&\le& C \{\exp(-\lambda n) + \exp(-\lambda nx^2) \},
	\eea
	for some positive constants $C$ and $\lambda$.
\end{proof}

\begin{lemma}\label{lemma:concentD}
Suppose the same notations of Lemma \ref{lem:ineq_thres}. If $(\nu_n -2p) \vee ||A|| \vee \log p= o(n)$ and $\gamma$ is a sufficiently large constant, then there exists a positive constant $C$ such that
\bea
E\{E^{\pi^i}(	||D||_1^2\mid\bbX_n)\} = \frac{C}{n},
\eea
for all sufficiently large $n$. 
\end{lemma}
\begin{proof}
		We have 
\bea
||D||_1^2 &\le& p\sum_{ij} d_{ij}^2 \nonumber\\
&=& p\sum_{ij} (H_\gamma(\Sigma)_{ij}-\sigma_{0,ij})^2 I(A_{ij}^c) 
\{I(|\sigma_{ij}| \ge \gamma (\log p/n)^{1/2} ) + I(|\sigma_{ij}| < \gamma (\log p/n)^{1/2} )  \}\nonumber\\
&\le & p\sum_{ij} (\sigma_{ij}-\sigma_{0,ij})^2 I(A_{ij}^c) I(|\sigma_{ij}| \ge \gamma (\log p/n)^{1/2} ) \\
&&+p\sum_{ij} 
\sigma_{0,ij}^2 I(A_{ij}^c)I(|\sigma_{ij}| < \gamma (\log p/n)^{1/2} ),
\eea
where $\sigma_{ij}$ is the $(i,j)$ element of $\Sigma$.
We also have
\bean
I(A_{ij}^c) I(|\sigma_{ij}| \ge \gamma (\log p/n)^{1/2} ) &=&
I(A_{ij}^c) I(|\sigma_{ij}| \ge \gamma (\log p/n)^{1/2} )  I(|\sigma_{0,ij}| <\frac{\gamma}{2}\Big(\frac{\log p}{n}\Big)^{1/2} ) \nonumber\\
&&+ I(A_{ij}^c) I(|\sigma_{ij}| \ge \gamma (\log p/n)^{1/2} ) I(|\sigma_{0,ij}| \ge\frac{\gamma}{2}\Big(\frac{\log p}{n}\Big)^{1/2} )\nonumber\\
&\le&I(|\sigma_{ij}| \ge \gamma (\log p/n)^{1/2} )  I(|\sigma_{0,ij}| <\frac{\gamma}{2}\Big(\frac{\log p}{n}\Big)^{1/2} ) \nonumber\\
&&+ I(|\sigma_{ij} - \sigma_{0,ij}|> 4\min \Big\{ |\sigma_{0,ij}|,\gamma\sqrt{\frac{\log p}{n}}\Big\})  I(|\sigma_{0,ij}| \ge\frac{\gamma}{2}\Big(\frac{\log p}{n}\Big)^{1/2} )\nonumber\\
&\le&I(|\sigma_{ij}| \ge \gamma (\log p/n)^{1/2} )  I(|\sigma_{0,ij}| <\frac{\gamma}{2}\Big(\frac{\log p}{n}\Big)^{1/2} ) \nonumber\\
&&+ I(|\sigma_{ij}-\sigma_{0,ij}| > 2 \gamma (\log p/n)^{1/2})\nonumber\\
&\le&I(|\sigma_{ij}| -|\sigma_{0,ij}| \ge \frac{\gamma}{2}\Big(\frac{\log p}{n}\Big)^{1/2})+ I(|\sigma_{ij}-\sigma_{0,ij}| > 2 \gamma (\log p/n)^{1/2})\nonumber\\
&\le& I(|\sigma_{ij} -\sigma_{0,ij}| \ge \frac{\gamma}{2}\Big(\frac{\log p}{n}\Big)^{1/2}),\label{formula:Aupper1}
\eean
where the first inequality is satisfied by the definitions of $A_{ij}$ and $H_\gamma(\Sigma)$. 
Next, we have
\bean
I(A_{ij}^c)I(|\sigma_{ij}| < \gamma (\log p/n)^{1/2} ) &=&
I(|\sigma_{ij}| < \gamma (\log p/n)^{1/2} )I( |\sigma_{0,ij}| > 4\min \Big\{ |\sigma_{0,ij}|,\gamma\sqrt{\frac{\log p}{n}}\Big\}   )\nonumber\\
&\le & I(|\sigma_{0,ij}|-|\sigma_{ij}- \sigma_{0,ij}| \le \gamma (\log p/n)^{1/2} ) I(|\sigma_{0,ij}| > 4\gamma\sqrt{\frac{\log p}{n}})\nonumber\\
&\le& I(|\sigma_{ij}- \sigma_{0,ij}| \ge 3|\sigma_{0,ij}|/4 ) I(|\sigma_{0,ij}| > 4\gamma\sqrt{\frac{\log p}{n}}).\label{formula:Aupper2}
\eean
Using the inequalties \eqref{formula:Aupper1} and \eqref{formula:Aupper2}, we get 
\bean
E_{\Sigma_0}\{ E^{\pi^i}(||D||_1^2\mid\bbX_n)\} &\le& 
p\sum_{ij} E\{ E^{\pi^i}( (\sigma_{ij}-\sigma_{0,ij})^2 I(|\sigma_{ij} -\sigma_{0,ij}| \ge \frac{\gamma}{2}\Big(\frac{\log p}{n}\Big)^{1/2})   \mid\bbX_n)\}		\label{formula:sparseupperbound1}\\
&&+p\sum_{ij} E\{ E^{\pi^i}(  \sigma_{0,ij}^2 I(|\sigma_{ij}- \sigma_{0,ij}| \ge \frac{3}{4}|\sigma_{0,ij}| ) I(|\sigma_{0,ij}| > 4\gamma\sqrt{\frac{\log p}{n}})  \mid\bbX_n)\}.\label{formula:sparseupperbound2}
\eean
For the upper bound of \eqref{formula:sparseupperbound1}, we have
\bea
\eqref{formula:sparseupperbound1}\le p^3  \sup_{ij} E_{\Sigma_0}\{ E^{\pi^i}(( \sigma_{ij}-\sigma_{0,ij})^4\mid\bbX_n)\}^{1/2}  E_{\Sigma_0}\{P^{\pi^i}( |\sigma_{ij} -\sigma_{0,ij}| \ge \frac{\gamma}{2}\Big(\frac{\log p}{n}\Big)^{1/2}     \mid\bbX_n)\}^{1/2},
\eea
and 
\bea
E_{\Sigma_0}\{ E^{\pi^i}(( \sigma_{ij}-\sigma_{0,ij})^4\mid\bbX_n)\} \le 
E_{\Sigma_0}\{ E^{\pi^i}(|| \Sigma_{\{i,j\}\times \{i,j\} } -(\Sigma_0)_{\{i,j\}\times \{i,j\} } ||_2^4\mid\bbX_n)\}.
\eea
Since $[\Sigma_{\{i,j\}\times \{i,j\}}|\bbX_n]\sim IW_2(n+\nu_n-2p+4, A_{\{i,j\}\times \{i,j\}}+nS_{\{i,j\}\times \{i,j\}})$ \citep[Theorem 5.2.3]{press2012applied}, 
Lemma 6.1 in the supplementary material of \cite{lee2021condmean} gives that there exist a positive constant $C_1$ such that
\bean
E_{\Sigma_0}\{ E^{\pi^i}(|| \Sigma_{\{i,j\}\times \{i,j\} } -(\Sigma_0)_{\{i,j\}\times \{i,j\} } ||_2^4\mid\bbX_n)\}^{1/2} \le 
\frac{C_1}{n},\label{formula:sparseupper3}
\eean
for all sufficiently large $n$. 
Lemma \ref{lem:concent_element} gives that there exist some positive constants $C_2$ and $\lambda_1$ such that
\bea
E_{\Sigma_0}\{P^{\pi^i}( |\sigma_{ij} -\sigma_{0,ij}| \ge \frac{\gamma}{2}\Big(\frac{\log p}{n}\Big)^{1/2}     \mid\bbX_n)\}^{1/2}
&\le& C_2 \{\exp(-\lambda_1 n) + \exp(-\lambda_1\gamma^2 \log p /2) \},
\eea
for all sufficiently large $n$. 
Thus, we get 
\bea
\eqref{formula:sparseupperbound1} \le C_3 \frac{1}{n}(\frac{1}{  p^{\lambda_1 \gamma^2/2-3}} + p^3\exp(-\lambda_1 n/2)   ),
\eea
for a positive constant $C_3$.
If $\gamma$ is a sufficiently large constant and $\log p = o(n)$, then $\eqref{formula:sparseupperbound1} = O(1/n)$.

For the upper bound of \eqref{formula:sparseupperbound2}, there exist positive constants $C_4$ and $\lambda_2$ such that
\bean
\eqref{formula:sparseupperbound2} 
&\le& \sum_{ij} \sigma_{0,ij}^2 I( |\sigma_{0,ij}| \ge 4\gamma (\log p/n)^{1/2})  E_{\Sigma_0} \{P^{\pi^i} ( |\sigma_{ij} - \sigma_{0,ij}| > \frac{3}{4}|\sigma_{0,ij}| \mid\bbX_n) \} \nonumber\\
&\le& \sum_{ij} \sigma_{0,ij}^2 I( |\sigma_{0,ij}| \ge 4\gamma (\log p/n)^{1/2})  E_{\Sigma_0} \{P^{\pi^i} ( || \Sigma_{\{i,j\}\times \{i,j\} } -(\Sigma_0)_{\{i,j\}\times \{i,j\} } ||_2 > \frac{3}{4}|\sigma_{0,ij}| \mid\bbX_n) \} \nonumber\\
&\le& C_4\frac{p^2}{n} \sup_{ij} \{n\sigma_{0,ij}^2 \exp(- n\lambda_2 \sigma_{0,ij}^2 ) I( n\sigma_{0,ij}^2 \ge 16\gamma^2 \log p)\}\nonumber\\
&\le& C_4\frac{p^2}{n} 16\gamma^2 \log p\exp(-\lambda_2 16\gamma^2 \log p)\nonumber\\
&\le& \frac{C_4}{n}\exp(-\lambda_2 16\gamma^2 \log p + 2\log p + \log\log p),\label{formula:D2}
\eean
for all sufficiently large $n$.
The third inequality holds by Lemma 6.2 in the supplementary material of \cite{lee2021condmean}. 
If $\gamma$ is a sufficiently large constant and $\log p=o(n)$, then $\eqref{formula:sparseupperbound2} = O(1/n)$.

\end{proof}

	\begin{proof}[Proof of Theorem \ref{main-thm:upperbound}] 
	    By Lemma 2.1 in the supplementary material of \cite{lee2021condmean}, we have
	    \bea
	    ||H^{(\epsilon_n)}_\gamma(\Sigma)-\Sigma_0||_2^2 &\le&
	    2^3 ||H_\gamma(\Sigma)-\Sigma_0||_2^2 + 4\epsilon_n^2.
	    \eea
	    Lemma \ref{lem:ineq_thres} gives that there exist a positive constant $C_1$ such that
	    \bea
		||H_\gamma(\Sigma)-\Sigma_0||_2^2 
		&\le& ||H_\gamma(\Sigma)-\Sigma_0||_1^2 \\
		&\le&  C_1(c_{n,p}^2\Big(\frac{\log p}{n}\Big)^{1-q}+\frac{\log p}{n}  )+ 2 ||D||_1^2,
		\eea
		where $D=(d_{ij})_{1\le i,j\le p}$, $d_{ij}=(H_\gamma(\Sigma)_{ij}-\sigma_{0,ij}) I(A_{ij}^c)$, and 
\bea
A_{ij}=\Big\{ |(H_\gamma(\Sigma))_{ij}-\sigma_{0,ij}|\le 4\min \Big\{ |\sigma_{0,ij}|,\gamma\sqrt{\frac{\log p}{n}}\Big\} \Big\}.
\eea
By applying Lemma \ref{lemma:concentD} to the term $||D_1||^2$, we complete the proof.

	\end{proof}
\se{Proof of Theorem \ref{main-thm:portfolio}}
In this section, we prove Theorem \ref{main-thm:portfolio} to show the upper bound of the post-processed posterior in the respect of the global minimum variance portfolio. Before proving the theorem, we present two lemmas and two corollaries from one of the lemma.

\begin{lemma}\label{lemma:1normconcent}
	Let $M$ be an arbitrary positive constant. If $(\nu_n -2p) \vee ||A||= o(n)$  and $x$ satisfies 
	\bea
	\Big[x- \Big\{C_1 c_{n,p} \Big(\frac{\log p}{n}\Big)^{(1-q)/2} + 
	4\gamma \Big(\frac{\log p}{n}\Big)^{1/2} \Big\}\Big] > M,
	\eea
	for a positive constant $C_1$, 
	then there exist positive constants $C$, $\lambda$ such that
\bea
P(||H_\gamma(\Sigma) - \Sigma_0||_1 > x) \le C (2p^2\exp(-\lambda n)  +p^2\exp(-\lambda n^\delta )+ n^{(2-\lambda \gamma)(1-\delta)/2}),
\eea
for all $\gamma >2/\lambda$, all sufficiently large $n$ and all $\delta\in(0,1)$.
\end{lemma}
\begin{proof}
	
Using Lemma \ref{lem:ineq_thres}, we have
\bea
P^{\pi^i}(||H_\gamma(\Sigma) - \Sigma_0||_1 > x\mid\bbX_n) &\le& 
P^{\pi^i}(||D||_1 > x -  s_n \mid\bbX_n) \\
&\le&p\sup_j P^{\pi^i}( \sum_{i=1}^p  |d_{ij}| > x - s_n  \mid\bbX_n)\\
    &\le& p^2 \sup_{i,j}P^{\pi^i}(|d_{ij}| >(x-s_n)/p\mid\bbX_n),
\eea
where $s_n =  C_1 c_{n,p} ((\log p)/n)^{(1-q)/2} + 
4\gamma ((\log p)/n)^{1/2}$ for a positive constant $C_1$.
We show the upper bound of $\sup_{i,j}P^{\pi^i}(|d_{ij}| >(x-s_n)/p\mid\bbX_n)$ for the two cases: 
$p \le n^{(1-\delta)/2}$ and $p > n^{(1-\delta)/2}$, where $\delta$ is an arbitrary real number in $(0,1)$.
First, we consider the case $p \le n^{(1-\delta)/2}$.
 We have 
\bea
&&P^{\pi^i}(|d_{ij}| > (x-s_n)/p\mid\bbX_n) \\
&\le& P^{\pi^i}(|H_\gamma(\Sigma)_{ij}-\sigma_{0,ij} | >(x-s_n)/p\mid\bbX_n)\\
&\le&
P^{\pi^i}(|\sigma_{ij}-\sigma_{0,ij} | >(x-s_n)/p\mid\bbX_n) +
E^{\pi^i}(I(|\sigma_{0,ij} | >(x-s_n)/p) I(|\sigma_{ij}|<\gamma ((\log p)/n)^{1/2})\mid\bbX_n)\\
&\le&
P^{\pi^i}(|\sigma_{ij}-\sigma_{0,ij} | >(x-s_n)/p\mid\bbX_n) + 
P^{\pi^i}(|\sigma_{0,ij} |- |\sigma_{ij}| > (x-s_n)/p - \gamma ((\log p)/n)^{1/2}\mid\bbX_n)\\
&\le& 2 P^{\pi^i}(|\sigma_{ij}-\sigma_{0,ij} | >(x-s_n)/p- \gamma (\log p/n)^{1/2}\mid\bbX_n)\\
&\le& 2 P^{\pi^i}(|\sigma_{ij}-\sigma_{0,ij} | >(x-s_n)n^{(\delta-1)/2} - \gamma ( (1-\delta)(\log n)/(2 n))^{1/2}\mid\bbX_n),
\eea
where the assumption $p\le n^{(1-\delta)}$ is used only in the last inequality.
For all sufficiently large $n$, we have
\bea
(x-s_n)n^{(\delta-1)/2} - \gamma ( (1-\delta)(\log n)/(2 n))^{1/2} > Mn^{(\delta-1)/2}/2 .
\eea Thus, there exist some positive constants $C_2$ and $\lambda_1$ such that 
\bean
P^{\pi^i}(|d_{ij}| > (x-s_n)/p\mid\bbX_n) &\le& C_2[\exp(-n\lambda_1)  +  \exp(-\lambda_1 n^\delta )],\label{formula:concent_res1}
\eean
for all sufficiently large $n$ by Lemma \ref{lem:concent_element}.

 Next we consider the second case $p > n^{(1-\delta)/2}$. 
Since $x-s_n>0$, we have 
 \bea
 P^{\pi^i}(|d_{ij}| > (x-s_n)/p \mid\bbX_n) &\le& P^{\pi^i}(A_{ij}^c\mid\bbX_n).
 \eea
Note that  
\bea
I(A_{ij}^c)  &=& I(A_{ij}^c) \{ I(|\sigma_{ij}| \ge \gamma (\log p/n)^{1/2} )  + I(|\sigma_{ij}| < \gamma (\log p/n)^{1/2} )    \} \\
&\le&  I(|\sigma_{ij} -\sigma_{0,ij}| \ge \frac{\gamma}{2}\Big(\frac{\log p}{n}\Big)^{1/2}) +   I(|\sigma_{ij} -\sigma_{0,ij}| \ge 3\gamma \Big(\frac{\log p}{n}\Big)^{1/2}),
\eea
by \eqref{formula:Aupper1} and \eqref{formula:Aupper2}.
Thus, by Lemma \ref{lem:concent_element}, there exist some positive constants $C_3$ and $\lambda_2$ such that 
\bean
P^{\pi^i}(A_{ij}^c\mid\bbX_n) \le  C_3(\exp(-\lambda_2 n) + \exp(-\lambda_2 \gamma \log p))\label{formula:probAij}.
\eean	
If $\gamma$ is a sufficiently large positive constant such that $\gamma > 2/\lambda_2$, then
\bean
p^2 \sup_{i,j}P^{\pi^i}(A_{ij}^c\mid\bbX_n)&\le&   C_3 (p^2\exp(-\lambda_2 n)  + p^{2-\lambda_2 \gamma} )\nonumber\\
&\le&   C_3 (p^2\exp(-\lambda_2 n)  + n^{(2-\lambda_2\gamma)(1-\delta)/2} ).\label{formula:concent_res2}
\eean
The assumption $p > n^{(1-\delta)/2}$ is used only in the last inequality.
By combining \eqref{formula:concent_res1} and \eqref{formula:concent_res2}, the proof is completed.

\end{proof}

Using Lemma \ref{lemma:1normconcent}, we derive two Corollaries \ref{cor:maxevconcent} and \ref{cor:minevconcent}.
\begin{corollary}\label{cor:maxevconcent}
Suppose the same setting of Lemma \ref{lemma:1normconcent}. If $$x > 2||\Sigma_0||   +\Big\{C_1 c_{n,p} \Big(\frac{\log p}{n}\Big)^{(1-q)/2} + 
4\gamma \Big(\frac{\log p}{n}\Big)^{1/2} \Big\}$$ for a positive constant $C_1$, then there exist positive constants $C$, $\lambda$ such that
\bea
P^{\pi^i}(\lambda_{\max}(H_\gamma(\Sigma))  > x) \le C (2p^2\exp(-\lambda n)  +p^2\exp(-\lambda n^\delta )+ n^{(2-\lambda \gamma)(1-\delta)/2}),
\eea
for all $\gamma >2/\lambda$, all sufficiently large $n$ and all $\delta\in(0,1)$.

\end{corollary}
\begin{proof}
	We have 
\bea
P^{\pi^i}(\lambda_{\max}(H_\gamma(\Sigma)) > x \mid\bbX_n)  &\le& 
P^{\pi^i}( ||H_\gamma(\Sigma)-\Sigma_0|| > x-||\Sigma_0||\mid\bbX_n) \\
&\le&P^{\pi^i}( ||H_\gamma(\Sigma)-\Sigma_0||_1 > x-||\Sigma_0||\mid\bbX_n) 
\eea	
Since $x-||\Sigma_0|| > ||\Sigma_0||  +\Big\{C_1 c_{n,p} \Big(\frac{\log p}{n}\Big)^{(1-q)/2} + 
4\gamma \Big(\frac{\log p}{n}\Big)^{1/2} \Big\}$, we apply Lemma \ref{lemma:1normconcent} and the proof is completed.

\end{proof}

\begin{corollary}\label{cor:minevconcent}
Suppose the same setting of Lemma \ref{lemma:1normconcent}. If 
	\bea
	 x <  \lambda_{\min}(\Sigma_0)/2 -\Big\{C_1 c_{n,p} \Big(\frac{\log p}{n}\Big)^{(1-q)/2} + 
	 4\gamma \Big(\frac{\log p}{n}\Big)^{1/2} \Big\},
	\eea
	for a positive constant $C_1$,
	then there exist positive constants $C$, $\lambda$ such that
	\bea
	P^{\pi^i}(\lambda_{\min}(H_\gamma(\Sigma)) < x\mid\bbX_n) \le C (2p^2\exp(-\lambda n)  +p^2\exp(-\lambda n^\delta )+ n^{(2-\lambda \gamma)(1-\delta)/2}),
	\eea
	for all $\gamma >2/\lambda$, all sufficiently large $n$ and all $\delta\in(0,1)$.
\end{corollary}
\begin{proof}
	We have
\bea
	P^{\pi^i}(\lambda_{\min}(H_\gamma(\Sigma)) < x\mid\bbX_n)  &\le&
	P^{\pi^i}(\lambda_{\min}(\Sigma_0) -||H_\gamma(\Sigma)-\Sigma_0|| < x \mid\bbX_n)\\
	 &\le& P^{\pi^i}(||H_\gamma(\Sigma)-\Sigma_0||_1 >\lambda_{\min}(\Sigma_0)- x \mid\bbX_n)
\eea	
Since $\lambda_{\min}(\Sigma_0)-x > \lambda_{\min}(\Sigma_0)/2   +\Big\{C_1 c_{n,p} \Big(\frac{\log p}{n}\Big)^{(1-q)/2} + 
4\gamma \Big(\frac{\log p}{n}\Big)^{1/2} \Big\}$, the proof is completed by Lemma \ref{lemma:1normconcent}.

\end{proof}

\begin{lemma}\label{lem:4thmoment}
If $c_{n,p}^4 (\log p/n)^{2(1-q)} + \gamma^4((\log p)/n)^2$ is bounded and $(\nu_n -2p) \vee ||A||= o(n)$, then 
$E_{\Sigma_{0}}E^{\pi^i} (||(\Sigma_{0}-H_\gamma^{(\epsilon_n)}(\Sigma) )\Sigma_{0}^{-1}\mathbf{1} ||^4 \mid\bbX_n)$ is bounded by a positive constant for all sufficiently large $p$. 
\end{lemma}
\begin{proof}
    By Lemma \ref{lem:ineq_thres}, it suffices to show that $E_{\Sigma_0}\{E^{\pi^i}(||D||_1^4\mid\bbX_n)\}$ is bounded.
We have
\bea
E_{\Sigma_0}\{E^{\pi^i}(||D||_2^4\mid\bbX_n)\} &\le&   E_{\Sigma_0}\{E^{\pi^i}(\sup_i (\sum_j|d_{ij}|)^4\mid\bbX_n)\}\\
&\le& p^3  \sum_{i,j}E_{\Sigma_0}\{E^{\pi^i}(|d_{ij}|^4\mid\bbX_n)\}\\
&\le& p^5  \sup_{i,j}E_{\Sigma_0}\{E^{\pi^i}(| H_\gamma(\Sigma)_{ij} - \sigma_{0,ij}|^4 I(A_{ij}^c)  \mid\bbX_n)\}\\
&\le& 
p^5  \sup_{i,j} E_{\Sigma_0}\{E^{\pi^i}( |\sigma_{ij}-\sigma_{0,ij}|^4I(A_{ij}^c)  \mid\bbX_n)\} \\
&&+p^5  \sup_{i,j} E_{\Sigma_0}\{E^{\pi^i}( |\sigma_{0,ij}|^4I(A_{ij}^c) I(|\sigma_{ij}< \gamma(\log p/n)^{1/2})  \mid\bbX_n)\}
\eea
By \eqref{formula:probAij} and Lemma 6.1 in the supplementary material of \cite{lee2021condmean}, there exist positive constants $C_1$ and $\lambda_1$ such that
\bea
E_{\Sigma_0}\{E^{\pi^i}( |\sigma_{ij}-\sigma_{0,ij}|^4I(A_{ij}^c)  \mid\bbX_n)\} &\le& 
[ E\{( \sigma_{ij}-\sigma_{0,ij})^8 \mid \bbX_n\} ]^{1/2}  P(A_{ij}^c\mid\bbX_n)^{1/2}\\
&\le&\{ E( || \Sigma_{\{i,j\}\times \{i,j\} } -(\Sigma_0)_{\{i,j\}\times \{i,j\} } ||_2^8 \mid \bbX_n) \}^{1/2}  P(A_{ij}\mid\bbX_n)^{1/2} \\
&\le& C_1\frac{p^2}{n^2}  p^{-\lambda_1 \gamma}.
\eea
Next, we have 
\bea
&& E_{\Sigma_0}\{E^{\pi^i}( |\sigma_{0,ij}|^4I(A_{ij}^c) I(|\sigma_{ij}< \gamma(\log p/n)^{1/2})  \mid\bbX_n)\} \\
 &\le&  E_{\Sigma_0}\{E^{\pi^i}( |\sigma_{0,ij}|^4
  I(|\sigma_{ij}- \sigma_{0,ij}| \ge 3|\sigma_{0,ij}|/4 ) I(|\sigma_{0,ij}| > 4\gamma\sqrt{\frac{\log p}{n}}) \mid\bbX_n)\}\\
  &\le& |\sigma_{0,ij}|^4I(|\sigma_{0,ij}| > 4\gamma\sqrt{\frac{\log p}{n}}) 
   E_{\Sigma_0}\{P^{\pi^i}(  ||\Sigma_{\{i,j\}\times \{i,j\}} - (\Sigma_0)_{\{i,j\}\times \{i,j\}} >3|\sigma_{0,ij}|/4 \mid\bbX_n)\}\\
   &\le& |\sigma_{0,ij}|^4I(|\sigma_{0,ij}| > 4\gamma\sqrt{\frac{\log p}{n}})  \exp(-n\lambda_1 \sigma_{0,ij}^2) \\
   &= &\frac{1}{n^2}  (n\sigma_{0,ij}^2)^2 \exp(-n\lambda_1 \sigma_{0,ij}^2 )
  	I(n\sigma_{0,ij}^2 > 16\gamma^2 \log p),
\eea
where the first inequality is satisfied by \eqref{formula:Aupper2}.
Note that $f(x) = x^2\exp(-\lambda_1 x)$ is a decreasing function when $x>2/\lambda_1$. 
If $p$ is large enough to satisfy $16\gamma^2 \log p >2/\lambda_1$, then 
\bea
&&\frac{1}{n^2}  (n\sigma_{0,ij}^2)^2 \exp(-n\lambda_1 \sigma_{0,ij}^2 )
I(n\sigma_{0,ij}^2 > 16\gamma^2 \log p)\\
&\le& \frac{1}{n^2} ( 16\gamma^2 \log p)^2 \exp ( -\lambda_1  16\gamma^2 \log p)\\
&=& \frac{( 16\gamma^2 \log p)^2 }{ n^2   p^{\lambda_1  16\gamma^2}}.
\eea
Thus, 
\bea
E_{\Sigma_0}\{E^{\pi^i}(||D||_2^4\mid\bbX_n)\}  \le \frac{p^{5-\lambda_1 \gamma }}{n^2} + \frac{( 16\gamma^2 \log p)^2 }{ n^2   p^{\lambda_1  16\gamma^2}},
\eea
which is bounded by a positive constant when $\gamma$ is a sufficiently large constant.

\end{proof}

 Using the lemmas and corollaries, we prove Theorem \ref{main-thm:portfolio}.
    \begin{proof}[Proof of Theorem \ref{main-thm:portfolio}]	
    
    We have 
    \bea
    \Bigg|\Bigg|\frac{H_\gamma^{(\epsilon_n)}(\Sigma)^{-1}\mathbf{1}}{\mathbf{1}^TH_\gamma^{(\epsilon_n)}(\Sigma)^{-1}\mathbf{1}} -  \frac{\Sigma_0^{-1}\mathbf{1}}{\mathbf{1}^T\Sigma_0^{-1}\mathbf{1}} \Bigg|\Bigg|_2^2
    &\le&  
   	\frac{2}{|\mathbf{1}^TH_\gamma^{(\epsilon_n)}(\Sigma)^{-1}\mathbf{1}|^2} 
   	||(H_\gamma^{(\epsilon_n)}(\Sigma)^{-1} -\Sigma_{0}^{-1})\mathbf{1}||_2^2 \\
   	&&+ \frac{2 ||\Sigma_{0}^{-1}\mathbf{1}/\sqrt{p}||^2}{|(\mathbf{1}/\sqrt{p})^T\Sigma_{0}^{-1}\mathbf{1}/\sqrt{p}|^2} 
   	\frac{|(\mathbf{1}/\sqrt{p})^T (H_\gamma^{(\epsilon_n)}(\Sigma)^{-1} -\Sigma_{0}^{-1})\mathbf{1}|^2}{|\mathbf{1}^TH_\gamma^{(\epsilon_n)}(\Sigma)^{-1}\mathbf{1}|^2}\\
   	&\le& 	\frac{2   	||(H_\gamma^{(\epsilon_n)}(\Sigma)^{-1} -\Sigma_{0}^{-1})\mathbf{1}||_2^2}{|\mathbf{1}^TH_\gamma^{(\epsilon_n)}(\Sigma)^{-1}\mathbf{1}|^2}  \Big( 1+  \frac{ ||\Sigma_{0}^{-1}||^2}{\lambda_{\min}(\Sigma_{0}^{-1})^2} \Big) \\
   	&\le& 	\frac{2}{p^2}   	||(H_\gamma^{(\epsilon_n)}(\Sigma)^{-1} -\Sigma_{0}^{-1})\mathbf{1}||_2^2   \lambda_{\max}(H_\gamma^{(\epsilon_n)}(\Sigma))^2  \Big( 1+  \frac{ ||\Sigma_{0}^{-1}||^2}{\lambda_{\min}(\Sigma_{0}^{-1})^2} \Big)\\
   	&\le& 	\frac{2}{p^2}   	||(\Sigma_{0}-H_\gamma^{(\epsilon_n)}(\Sigma) )\Sigma_{0}^{-1}\mathbf{1}||_2^2  ||H_\gamma^{(\epsilon_n)}(\Sigma)^{-1}||^2 ||H_\gamma^{(\epsilon_n)}(\Sigma)||^2  \Big( 1+  \frac{ ||\Sigma_{0}^{-1}||^2}{\lambda_{\min}(\Sigma_{0}^{-1})^2} \Big).
    \eea
    Next, for an arbitrary positive constant $C_1$, we have
    \bean
 &&   E^{\pi^i} (	||(\Sigma_{0}-H_\gamma^{(\epsilon_n)}(\Sigma) )\Sigma_{0}^{-1}\mathbf{1}||_2^2  ||H_\gamma^{(\epsilon_n)}(\Sigma)^{-1}||^2 ||H_\gamma^{(\epsilon_n)}(\Sigma)||^2 \mid\bbX_n)\nonumber\\
    &\le&C_1^4 E^{\pi^i} ( 	||(\Sigma_{0}-H_\gamma^{(\epsilon_n)}(\Sigma) )\Sigma_{0}^{-1}\mathbf{1}||_2^2 \mid\bbX_n) \label{formula:portupper1}\\
    &&+ E^{\pi^i} (||(\Sigma_{0}-H_\gamma^{(\epsilon_n)}(\Sigma) )\Sigma_{0}^{-1}\mathbf{1} ||^4 \mid\bbX_n)^{1/2}  E^{\pi^i}\{I(||H_\gamma^{(\epsilon_n)}(\Sigma)^{-1}||>C_1  ) I(||H_\gamma^{(\epsilon_n)}(\Sigma)||>C_1)\mid\bbX_n\}^{1/2}.\label{formula:portupper2}
    \eean
    We show the upper bound of \eqref{formula:portupper2}. 
    Lemma \ref{lem:4thmoment} shows that $E^{\pi^i} (||(\Sigma_{0}-H_\gamma^{(\epsilon_n)}(\Sigma) )\Sigma_{0}^{-1}\mathbf{1} ||^4 \mid\bbX_n)^{1/2} $ is bounded by a positive constant. 
    There exist positive constants $C_2$ and $\lambda_1$ such that
    \bea
    &&E^{\pi^i}\{I(||H_\gamma^{(\epsilon_n)}(\Sigma)^{-1}||>C_1  ) I(||H_\gamma^{(\epsilon_n)}(\Sigma)||>C_1)\mid\bbX_n\}\\
    &\le& E^{\pi^i}\{I(||H_\gamma(\Sigma)^{-1}||>C_1  ) I(||H_\gamma(\Sigma)||>C_1)\mid\bbX_n\} \\
    &&+ E^{\pi^i}\{I(\lambda_{\min}(H_\gamma(\Sigma))< \epsilon_n  ) \mid\bbX_n\} \\
    &\le& P^{\pi^i} (||H_\gamma(\Sigma)||>C_1\mid\bbX_n)^{1/2}
    P^{\pi^i} (\lambda_{\min}(H_\gamma(\Sigma))<C_1^{-1}\mid\bbX_n)^{1/2}\\
    &&+ E^{\pi^i}\{I(\lambda_{\min}(H_\gamma(\Sigma))< \epsilon_n  ) \mid\bbX_n\} \\
  	 &\le& P^{\pi^i} (||H_\gamma(\Sigma)||>C_1\mid\bbX_n)^{1/2}
  	P^{\pi^i} (\lambda_{\min}(H_\gamma(\Sigma))<C_1^{-1}\mid\bbX_n)^{1/2}\\
     &&+ P^{\pi^i}(\lambda_{\min}(\Sigma_{0})- \epsilon_n < ||H_\gamma(\Sigma)-\Sigma_{0}|| \mid\bbX_n )  \\
    &\le& C_2 (p^2\exp(-\lambda_1 n)  +p^2\exp(-\lambda_1 n^\delta )+ n^{(2-\lambda_1 \gamma)(1-\delta)/2}),
        \eea
     for all sufficiently large $n$.
    The last inequality is satisfied by Corollarys \ref{cor:maxevconcent}, \ref{cor:minevconcent} and Lemma \ref{lemma:1normconcent} since $\epsilon_n<\lambda_{\min}(\Sigma_{0})/2$ and $c_{n,p}((\log p)/n)^{(1-q)/2} + 4\gamma ((\log p)/n)^{1/2} \lra 0$ as $n\lra \infty$. 
    
    We show the upper bound of \eqref{formula:portupper1}. There exists positive constant $C_3$ and $\lambda_2$ such that
    \bea
     && E_{\Sigma_0} E^{\pi^i}( ||(\Sigma_{0}-H_\gamma^{(\epsilon_n)}(\Sigma) )\Sigma_{0}^{-1}\mathbf{1} ||^2 \mid\bbX_n) \\&\le&
     E_{\Sigma_0} E^{\pi^i}(   ||(\Sigma_{0}-H_\gamma(\Sigma) )\Sigma_{0}^{-1}\mathbf{1} ||^2\mid\bbX_n)\\
        &&+E_{\Sigma_0} E^{\pi^i}(||(\Sigma_{0}-H_\gamma^{(\epsilon_n)}(\Sigma) ) ||^2I(\lambda_{\min}(H_\gamma(\Sigma))<\epsilon_n) \mid\bbX_n)  ||\Sigma_{0}^{-1}\mathbf{1} ||^2\\
         &\le& 2E_{\Sigma_0} E^{\pi^i}(|| diag(\Sigma_0 - H_\gamma(\Sigma)) \Sigma_{0}^{-1}\mathbf{1} ||^2 + 
        2||diag^c(\Sigma_0 - H_\gamma(\Sigma)) \Sigma_{0}^{-1}\mathbf{1} ||^2\mid\bbX_n)\\
        &&+||\Sigma_{0}^{-1}\mathbf{1} ||^2E_{\Sigma_0} E^{\pi^i}(||(\Sigma_{0}-H_\gamma^{(\epsilon_n)}(\Sigma) ) ||^4\mid\bbX_n )^{1/2}
      E_{\Sigma_0}  P^{\pi^i}(\lambda_{\min}(\Sigma_{0})- \epsilon_n < ||H_\gamma(\Sigma)-\Sigma_{0}|| \mid\bbX_n )^{1/2}\\
        &\le&  2E_{\Sigma_0} E^{\pi^i}(|| diag(\Sigma_0 - H_\gamma(\Sigma)) \Sigma_{0}^{-1}\mathbf{1} ||^2 + 
        2||diag^c(\Sigma_0 - H_\gamma(\Sigma)) \Sigma_{0}^{-1}\mathbf{1} ||^2\mid\bbX_n)\\
&&+ C_3  p (p^2\exp(-\lambda_2 n)  +p^2\exp(-\lambda_2 n^\delta )+ n^{(2-\lambda_2 \gamma)(1-\delta)/2}),
    \eea
    for all sufficiently large $n$. The last inequality is satisfied by Lemma \ref{lemma:1normconcent}.
    Next, we have 
    \bea
    &&E_{\Sigma_0} E^{\pi^i}(|| diag(\Sigma_0 - H_\gamma(\Sigma)) \Sigma_{0}^{-1}\mathbf{1} ||^2\mid\bbX_n)\\
    &\le&  ||\Sigma_0^{-1}||^2 \sum_{i=1}^p E_{\Sigma_0} E^{\pi^i}((\Sigma_{0,ii} -H_\gamma(\Sigma)_{ii})^2\mid\bbX_n)\\
    &\le& 2 ||\Sigma_0^{-1}||^2   \sum_{i=1}^{p-1} E_{\Sigma_0} E^{\pi^i}(||(\Sigma_0)_{i:i+1,i:i+1} - H_\gamma(\Sigma)_{i:i+1,i:i+1}||^2 \mid\bbX_n)
    \eea
    We apply Theorem \ref{main-thm:upperbound} considering that the true covariance is $(\Sigma_0)_{i:i+1,i:i+1}$. There exists a positive constant $C_4$ such that
   \bea
	   \sum_{i=1}^{p-1}E_{\Sigma_0} E^{\pi^i}( ||(\Sigma_0)_{i:i+1,i:i+1} - H_\gamma(\Sigma)_{i:i+1,i:i+1}||^2 \mid\bbX_n) \le 
	  C_4 \frac{p}{n},
   \eea
   for all sufficiently large $n$. 
    Lemmas \ref{lem:ineq_thres} and \ref{lemma:concentD} give that there exist a positive constant $C_4$ such that
    \bea
 &&  E_{\Sigma_0} E^{\pi^i}(||diag^c(\Sigma_0 - H_\gamma(\Sigma)) \Sigma_{0}^{-1}\mathbf{1} ||^2 \mid\bbX_n)\\
 &\le&
 ||  \Sigma_{0}^{-1}\mathbf{1} ||^2 E_{\Sigma_0} E^{\pi^i}\Big(2 || D||_1^2  + 2\Big\{4C c_{n,p} \Big(\frac{\log p}{n}\Big)^{(1-q)/2}\Big\}^2\mid\bbX_n\Big) \\
 &\le&  pC_4 (\frac{1}{n} + c_{n,p}^2 \Big(\frac{\log p}{n}\Big)^{(1-q)}),
    \eea
    for all sufficiently large $n$. 
    
    Collecting all the inequalities above, we have 
    \bea
        &&E_{\Sigma_0}  E^{\pi^i}  (||(\Sigma_{0}-H_\gamma^{(\epsilon_n)}(\Sigma) )\Sigma_{0}^{-1}\mathbf{1} ||^2 \mid\bbX_n) \\
        &\le& \frac{1}{p}   (p^2\exp(-\lambda_2 n)  +p^2\exp(-\lambda_2 n^\delta )+ n^{(2-\lambda_2 \gamma)(1-\delta)/2} + \frac{1}{n} + c_{n,p}^2 \Big(\frac{\log p}{n}\Big)^{(1-q)}),
    \eea
    and
    \bea
   E_{\Sigma_0}  E^{\pi^i} \Bigg(	 \Bigg|\Bigg|\frac{H_\gamma^{(\epsilon_n)}(\Sigma)^{-1}\mathbf{1}}{\mathbf{1}^TH_\gamma^{(\epsilon_n)}(\Sigma)^{-1}\mathbf{1}} -  \frac{\Sigma_0^{-1}\mathbf{1}}{\mathbf{1}^T\Sigma_0^{-1}\mathbf{1}} \Bigg|\Bigg|_2^2\mid\bbX_n\Bigg) \lesssim \frac{1}{p} \Big(  \frac{1}{n} + c_{n,p}^2 \Big(\frac{\log p}{n}\Big)^{(1-q)}  \Big).
    \eea  
    Since 
    \bea
  \Bigg|\Bigg|\frac{\Sigma_0^{-1}\mathbf{1}}{\mathbf{1}^T\Sigma_0^{-1}\mathbf{1}} \Bigg|\Bigg|^2\le \frac{||\Sigma_0^{-1}||^2 ||\Sigma_0||^2}{p} ,
  \eea
  the proof is completed. 

        \end{proof}

\se{Proof of Lemmas \ref{main-lemma:lowerbound1} and \ref{main-lemma:lowerbound2}}
For the proof of Lemma \ref{main-lemma:lowerbound1}, we present Lemma \ref{lem:lower1}.

\begin{lemma}\label{lem:lower1}
	Suppose the notation of $\calG^{(1)}$ in \ref{main-calG1}.
	If $\tau/\sqrt{n} \le M/3$ and $\tau/M \le 1/3$, then there exists a positive constant $C$ such that
	\bea
	\min_{\{(\theta,\theta')\in\{0,1\}^{p'}:H(\theta,\theta')=1\}} ||\mathbb{P}_\theta \wedge \mathbb{P}_{\theta'} || \ge C.
	\eea
\end{lemma}
\begin{proof}
	Since $||\mathbb{P}_\theta \wedge \mathbb{P}_{\theta'} ||  = 1- 0.5|| \mathbb{P}_\theta -  \mathbb{P}_{\theta'} ||_1$, it suffices to show $|| \mathbb{P}_\theta -  \mathbb{P}_{\theta'} ||_1\le 1$. 
	We have
	\bea
	|| \mathbb{P}_\theta -  \mathbb{P}_{\theta'} ||_1^2 &\le& 
	2K(\mathbb{P}_{\theta'}\mid \mathbb{P}_{\theta}) \\
	&\le& n [ tr(\Sigma(\theta')\Sigma^{-1}(\theta)) - \log det (\Sigma(\theta')\Sigma^{-1}(\theta)) - p  ].
	\eea
	Let $D := \Sigma^{-1}(\theta) - \Sigma^{-1}(\theta')$ and $j$ be the index such that $D_{jj} >0$. Note that there is only one index of which $D_{jj}$ is positive when $H(\theta,\theta')=1$.  
	Let $x = \Sigma(\theta')_{jj} D_{jj}$. We have
	\bea
	tr(\Sigma(\theta')\Sigma^{-1}(\theta)) - p  &=& tr(\Sigma(\theta') D)\\
	&=& x\\
	\log det (\Sigma(\theta')\Sigma^{-1}(\theta)) &=& \log det (\Sigma(\theta')D +I_p)\\
	&=& \log (1+x),
	\eea
	thus
	\bea
	|| \mathbb{P}_\theta -  \mathbb{P}_{\theta'} ||_1^2 &\le& n(x- \log (1+x)) \\
	&=& \frac{n}{(1+\xi)^2} \frac{x^2}{2},
	\eea
	where $\xi \in [-|x|,|x|]$.
	Since $|x| \le (\tau/\sqrt{n}) /(M - \tau/\sqrt{n})$ and $\tau/\sqrt{n} \le M/3$,
	\bea
	|| \mathbb{P}_\theta -  \mathbb{P}_{\theta'} ||_1^2&\le& 
	\frac{nx^2}{2(1+\xi)^2}  \\
	&\le& 	\frac{nx^2}{2(1-0.5)^2}  \\
	&\le&  \frac{9\tau^2}{2M^2}.
	\eea
	Since $\tau/M \le 1/3$, the proof is completed.
	
\end{proof}

\begin{proof}[Proof of Lemma \ref{main-lemma:lowerbound1}]
	By the definition of $\Sigma(\theta)^{-1}$, 
	\bea
	\frac{\Sigma(\theta)^{-1} \mathbf{1}}{\mathbf{1}^T\Sigma^{-1}(\theta) \mathbf{1}}
	=\frac{1}{p} \frac{( (\Sigma(\theta)^{-1})_{11} ,\ldots,(\Sigma(\theta)^{-1})_{pp} )^T }{M+|\theta|\tau/(p\sqrt{n})},
	\eea
	and
	\bea
	\frac{\Sigma(\theta)^{-1} \mathbf{1}}{\mathbf{1}^T\Sigma^{-1}(\theta) \mathbf{1}} - 
	\frac{\Sigma(\theta')^{-1} \mathbf{1}}{\mathbf{1}^T\Sigma^{-1}(\theta') \mathbf{1}}
	= \frac{(v_1,v_2,\ldots,v_p)^T}{ p(M + |\theta| \tau/(p\sqrt{n})) (M + |\theta'| \tau/(p\sqrt{n}))},
	\eea
	for some $v_1,v_2,\ldots,v_p \in \bbR$. 
	When $i\le p'$ and $\theta_i\neq\theta'_i$, 
	\bea
	v_i = M\tau \frac{|\theta'|-|\theta|}{p\sqrt{n}} + |\theta_i-\theta_i'| \frac{\tau}{\sqrt{n}} \Big(M+\tau \frac{|\theta'|\theta_i + |\theta|\theta'_i}{p\sqrt{n}}\Big),
	\eea
	and
	\bea
	|v_i| &\ge& \frac{\tau}{\sqrt{n}} (M+\tau \frac{|\theta'|\theta_i + |\theta|\theta'_i}{p\sqrt{n}}) - M\tau/(2\sqrt{n})\\
	&\ge& M\tau/(2\sqrt{n}),
	\eea
	since $||\theta'|-|\theta||<p/2$.
	Thus, we have
	\bea
	|| (v_1,\ldots,v_p)^T||^2_2 &\ge& H(\theta,\theta') M^2\tau^2/(4n),\\
	\min_{H(\theta,\theta')\ge 1}\frac{||w_{GMV}(\Sigma(\theta'))-w_{GMV}(\Sigma(\theta))||^2}{H(\theta,\theta')} &\ge& 
	\frac{9\tau^2}{16p^2n},
	\eea
	where the last inequality is satisfied since $|\theta|\tau/(p\sqrt{n}) \le M/3$.
	By applying this inequality and the result of Lemma \ref{lem:lower1} to Assouad's lemma, we complete the proof.

\end{proof}

Next, we give the proof of Lemma \ref{main-lemma:lowerbound2}. For this proof we present Lemma \ref{lemma:lowerbound2}.
	
	\begin{lemma}\label{lemma:lowerbound2}
    Suppose the notation of $\calG^{(2)}$ in \eqref{main-calG2} and assume $2k\epsilon_{n,p}M^{-1}<1$.
		Let 
		$
		(\Sigma(\theta') - \Sigma(\theta))\Sigma(\theta)^{-1} \mathbf{1} = (x_1,\ldots,x_p)$.
		For $i\le r$ with $\gamma_i(\theta') \neq \gamma_i(\theta)$, 
		$$x_i =  (\gamma_i(\theta') - \gamma_i(\theta)) k \epsilon_{n,p} (M^{-1}+ \delta_i),$$ 
		with
		$$|\delta_i| \le \frac{2k\epsilon_{n,p}}{M^2 (1-2k\epsilon_{n,p}/M)}.$$
		
	\end{lemma}

	\begin{proof}

		By the definition of the infinity norm and Theorem 2.3.4. in \cite{golub1996matrix}, 
		we have 
	\bea
	||(\Sigma(\theta)^{-1} - M^{-1} I_p) \mathbf{1}||_\infty
	&\le& || \Sigma(\theta)^{-1} - M^{-1} I_p||_\infty \\
	&\le& 	\frac{1}{M^2}\frac{||\Sigma(\theta) - M I_p ||_\infty }{1-M^{-1}||\Sigma(\theta)- MI_p||_\infty}.
	\eea	
	Since $M^{-1}||\Sigma(\theta)- M I_p ||_\infty \le 2k\epsilon_{n,p}M^{-1}<1$, we have 
	\bean	\label{formula:inequal_lower2}
	||(\Sigma(\theta)^{-1} - M^{-1} I_p) \mathbf{1}||_\infty \le  \frac{2k\epsilon_{n,p}}{M^2 (1-2k\epsilon_{n,p}/M)}.
	\eean
		
	Next, we check $x_i$ when $i\le r$, $\gamma_i(\theta')=1$ and $\gamma_i(\theta)=0$. 
	By the definition of $\Sigma(\theta')$, we have
	\bea
	x_i  &=&  [\Sigma(\theta')]_{i} \Sigma(\theta)^{-1} \mathbf{1} \\
	&=& M^{-1}[\Sigma(\theta')]_{i} \mathbf{1} + [\Sigma(\theta')]_{i} ( (\Sigma(\theta)^{-1}-M^{-1}I_p) \mathbf{1})\\
	&=& M^{-1} k\epsilon_{n,p} +   [\Sigma(\theta')]_{i} ( (\Sigma(\theta)^{-1}-M^{-1}I_p) \mathbf{1}).
	\eea
	By \eqref{formula:inequal_lower2}, 
	\bea
	| [\Sigma(\theta')]_{i} ( (\Sigma(\theta)^{-1}-M^{-1}I_p) \mathbf{1})| &\le&
	|| [\Sigma(\theta')]_{i}||_1 || (\Sigma(\theta)^{-1}-M^{-1}I_p) \mathbf{1}||_\infty \\
	&\le& k\epsilon_{n,p}  \frac{2k\epsilon_{n,p}}{M^2 (1-2k\epsilon_{n,p}/M)}.
	\eea
	Thus, we have
	\bea
	|x_i - M^{-1} k\epsilon_{n,p}| \le  k\epsilon_{n,p}  \frac{2k\epsilon_{n,p}}{M^2 (1-2k\epsilon_{n,p}/M)},
	\eea
	for $i\le r$ with $\gamma_i(\theta')=1$ and $\gamma_i(\theta)=0$. 
	Likewise, when $\gamma_i(\theta')=0$ and $\gamma_i(\theta)=1$, we have
		\bea
	|x_i + M^{-1} k\epsilon_{n,p}| \le  k\epsilon_{n,p}  \frac{2k\epsilon_{n,p}}{M^2 (1-2k\epsilon_{n,p}/M)}.
	\eea
	Collecting these inequalities, we complete the proof.
		
	\end{proof}

	\begin{proof}[Proof of Lemma \ref{main-lemma:lowerbound2}]
		We have
		\bea
		\frac{\Sigma(\theta)^{-1} \mathbf{1}}{\mathbf{1}^T\Sigma(\theta)^{-1} \mathbf{1}} - \frac{\Sigma(\theta')^{-1} \mathbf{1}}{\mathbf{1}^T\Sigma(\theta')^{-1} \mathbf{1}} &=&
		\frac{\Sigma(\theta')^{-1}}{\mathbf{1}^T \Sigma(\theta)^{-1}\mathbf{1}}(I_p - \frac{\mathbf{1}\mathbf{1}^T\Sigma(\theta')^{-1} }{\mathbf{1}^T \Sigma(\theta')^{-1}\mathbf{1}} ) (\Sigma(\theta') - \Sigma(\theta))\Sigma(\theta)^{-1} \mathbf{1},
		\eea	
		and 
		\bean
		&&	\Bigg|\Bigg|	\frac{\Sigma(\theta)^{-1} \mathbf{1}}{\mathbf{1}^T\Sigma(\theta)^{-1} \mathbf{1}} - \frac{\Sigma(\theta')^{-1} \mathbf{1}}{\mathbf{1}^T\Sigma(\theta')^{-1} \mathbf{1}}\Bigg|\Bigg|_2\nonumber\\
		 &=& 
			\sup_{v:||v||_2\neq 0 } \Bigg[
					\frac{v^T\Sigma(\theta')^{-1}}{||v||_2\mathbf{1}^T \Sigma(\theta)^{-1}\mathbf{1}}(I_p - \frac{\mathbf{1}\mathbf{1}^T\Sigma(\theta')^{-1} }{\mathbf{1}^T \Sigma(\theta')^{-1}\mathbf{1}} ) (\Sigma(\theta') - \Sigma(\theta))\Sigma(\theta)^{-1} \mathbf{1}
			\Bigg].\label{formula:gmvpspectral}
		\eean
	Define $v_0 \in\bbR^p$ such that $v_0^T \Sigma(\theta')^{-1}= (w_1,\ldots,w_p)$ and 
	$$
	w_i  = \begin{cases}
		\gamma_i(\theta') - \gamma_i(\theta), & \text{if } 1\le i\le r,\\
		\gamma_{i-r}(\theta) - \gamma_{i-r}(\theta'), & \text{if } r+1\le i\le 2r,\\
		0, & \text{otherwise},
	\end{cases}
	$$
	which satisfies
	\bean
	(w_1,\ldots,w_p) \mathbf{1} &=& 0.\label{formula:wsum}
	\eean
	
	Let $(\Sigma(\theta') - \Sigma(\theta))\Sigma(\theta)^{-1} \mathbf{1}  = (x_1,x_2,\ldots,x_p)$. 
	When $r<i \le 2r$, $x_i=0$ by the definition of $\Sigma(\theta)$ in $\calG^{(2)}$ and the fact that $2r\le p/2$.
	Then, using Lemma \ref{lemma:lowerbound2}, we have
	\bea
	(w_1,\ldots,w_p) 	(\Sigma(\theta') - \Sigma(\theta))\Sigma(\theta)^{-1} \mathbf{1}  &=&  \sum_{i=1}^p x_i w_i\\
	& = &H(\theta,\theta') k\epsilon_{n,p} (M^{-1} + \delta),
	\eea
	with 
	$$|\delta| \le \frac{2k\epsilon_{n,p}}{M^2 (1-2k\epsilon_{n,p}/M)}.$$
	Thus, we get
	\bean
	|(w_1,\ldots,w_p) 	(\Sigma(\theta') - \Sigma(\theta))\Sigma(\theta)^{-1} \mathbf{1}| &\ge&   
	k\epsilon_{n,p}H(\theta,\theta') (M^{-1}- \frac{2k\epsilon_{n,p}}{M^2 (1-2k\epsilon_{n,p})}	).\label{formula:wproduct}
	\eean
	We also have 
	\bean
	||v_0|| &\le& ||\Sigma(\theta)^{-1}||~ ||w|| \nonumber\\
	&\le& \sqrt{2H(\theta,\theta')}/\lambda_{\min}(\Sigma(\theta)).\label{formula:vnorm}
	\eean
	 
	 Collecting \eqref{formula:gmvpspectral}, \eqref{formula:wsum}, \eqref{formula:wproduct} and \eqref{formula:vnorm}, we obtain
	 \bea
	 			\Bigg|\Bigg|	\frac{\Sigma(\theta)^{-1} \mathbf{1}}{\mathbf{1}^T\Sigma(\theta)^{-1} \mathbf{1}} - \frac{\Sigma(\theta')^{-1} \mathbf{1}}{\mathbf{1}^T\Sigma(\theta')^{-1} \mathbf{1}}\Bigg|\Bigg|_2
	 &\ge&
	 	\frac{v_0^T\Sigma(\theta')^{-1}}{||v_0||_2\mathbf{1}^T \Sigma(\theta)^{-1}\mathbf{1}}(I_p - \frac{\mathbf{1}\mathbf{1}^T\Sigma(\theta')^{-1} }{\mathbf{1}^T \Sigma(\theta')^{-1}\mathbf{1}} ) (\Sigma(\theta') - \Sigma(\theta))\Sigma(\theta)^{-1} \mathbf{1} \\
	 &\ge& \frac{\sqrt{2H(\theta,\theta')}\lambda_{\min}(\Sigma(\theta))}{ \mathbf{1}^T \Sigma(\theta)^{-1} \mathbf{1}} k\epsilon_{n,p}
	 (M^{-1}- \frac{2k\epsilon_{n,p}}{M^2 (1-2k\epsilon_{n,p})}	)\\
	 &\ge& \frac{\sqrt{2H(\theta,\theta')}}{p} k\epsilon_{n,p}
	 (M^{-1}- \frac{2k\epsilon_{n,p}}{M^2 (1-2k\epsilon_{n,p})}	).
	 	 \eea
	 	 
	Let $\epsilon_{n,p} = (0.25(\log 2) \min(1,M_1)C_0^{-1})^{1/(1-q)} ((\log p)/n)^{1/2}$ and $k= \max(\lfloor  c_{n,p}\epsilon_{n,p}^{-q}\rfloor,0)$.
	Then, there exist a positive constant $C_2$ such that
	\bea
	\min_{H(\gamma(\theta),\gamma(\theta'))\ge 1} \frac{||w_{GMV}(\Sigma(\theta))-w_{GMV}(\Sigma(\theta'))||^2}{H(\gamma(\theta),\gamma(\theta'))} \frac{r}{2} \ge
	\frac{C_2 c_{n,p}^2  }{p} \Big(\frac{\log p}{n}\Big)^{1-q},
	\eea
	for all sufficiently large $n$. 
	By Lemma 6 in \cite{cai2012optimal}, 
	\bea
	\min_{1\le i\le r} || \bar{\mathbb{P}}_{i,0} \wedge \bar{\mathbb{P}}_{i,1} || \ge C_3,
	\eea
	for a positive constant. Combining the two inequalities, we complete the proof.

	\end{proof}

\bibliographystyle{dcu}
\bibliography{cov-ppp}